\numberwithin{equation}{section}
\newcommand{\ie}{{\it i.e.}}
\newcommand{\cf}{{\it c.f.}}
\newcommand{\eg}{{\it e.g.}}
\newcommand{\etc}{{\it etc.}}
\newcommand{\iid}{{\it i.i.d.}}
\newcommand{\Hajek}{H\'{a}jek}
\newcommand{\Holder}{H\"{o}lder }
\newtheorem{theorem}{Theorem}[section]
\newtheorem{lemma}{Lemma}[section]
\newtheorem{proposition}{Proposition}[section]
\newtheorem{corollary}{Corollary}[section]
\theoremstyle{remark}
\newtheorem{example}{\bf Example}[section]
\renewenvironment{proof}{\noindent{\it Proof.}\;}{\qed}
\newcommand{\score}{\dot{\ell}}
\newcommand{\scrA}{{\mathscr A}}
\newcommand{\scrF}{{\mathscr F}}
\newcommand{\scrP}{{\mathscr P}}
\newcommand{\scrX}{{\mathscr X}}
\newcommand{\NN}{{\mathbb N}}
\newcommand{\RR}{{\mathbb R}}
\newcommand{\DD}{{\mathbb D}}
\newcommand{\cF}{\mathcal F}
\newcommand{\cG}{\mathcal G}
\newcommand{\cH}{\mathcal H}
\newcommand{\cS}{\mathcal S}
\newcommand{\cW}{\mathcal W}
\newcommand{\cL}{\mathcal L}
\newcommand{\bX}{{\bf X}}
\newcommand{\bZ}{{\bf Z}}
\newcommand{\bepsilon}{\mbox{\boldmath{$\epsilon$}}}
\newcommand{\bmu}{\mbox{\boldmath{$\mu$}}}
\newcommand{\bc}{\begin{center}}
\newcommand{\ec}{\end{center}}
\newcommand{\be}{\begin{equation}}
\newcommand{\ee}{\end{equation}}
\newcommand{\ba}{\begin{array}}
\newcommand{\ea}{\end{array}}
\newcommand{\bean}{\begin{eqnarray*}}
\newcommand{\eean}{\end{eqnarray*}}
\newcommand{\bea}{\begin{eqnarray}}
\newcommand{\eea}{\end{eqnarray}}
\newcommand{\ben}{\begin{enumerate}}
\newcommand{\een}{\end{enumerate}}
\newcommand{\bed}{\begin{itemize}}
\newcommand{\eed}{\end{itemize}}
\begin{document}

\thispagestyle{empty}

\title{\vspace*{-9mm}
  The semi-parametric Bernstein-von Mises theorem\\
  for regression models with symmetric errors}
\author{Minwoo Chae$^{1}$, Yongdai Kim$^{2}$ and Bas Kleijn$^{3}$\\[2mm]
  {\small\it {}$^{1}$\, Department of Mathematics, University of Texas %
    at Austin}\\
  {\small\it {}$^{2}$\, Department of Statistics, Seoul National University}\\
  {\small\it {}$^{3}$\, Korteweg-de~Vries Institute for Mathematics,
    University of Amsterdam}
  }

\date{\today}
\maketitle

\begin{abstract}
In a smooth semi-parametric model, the marginal posterior distribution for
a finite dimensional parameter of interest is expected to be asymptotically
equivalent to the sampling distribution of any efficient point-estimator.
The assertion leads to asymptotic equivalence of credible and confidence
sets for the parameter of interest and is known as the semi-parametric
Bernstein-von Mises theorem. In recent years, it has received much
attention and has been applied in many examples. We consider models in
which errors with symmetric densities play a role; more specifically,
it is shown that the marginal posterior distributions of regression
coefficients in the linear regression and linear mixed effect models
satisfy the semi-parametric Bernstein-von Mises assertion. As a consequence,
Bayes estimators in these models achieve frequentist inferential
optimality, as expressed \eg\ through \Hajek's convolution and asymptotic
minimax theorems. Conditions for the prior on the space of error
densities are relatively mild and well-known constructions like the
Dirichlet process mixture of normal densities and random series priors
constitute valid choices. Particularly, the result provides an efficient
estimate of regression coefficients in the linear mixed effect model,
for which no other efficient point-estimator was known previously.
\end{abstract}


\section{Introduction}

In this paper, we give an asymptotic, Bayesian analysis of models with
errors that are distributed symmetrically.
The observations $\bX = (X_{1}, \ldots, X_{n})^T \in \RR^n$ are modeled by,
\begin{equation}
\label{eq:general_model}
	\bX = \bmu + \bepsilon,
\end{equation}
where $\bmu = (\mu_{1}, \ldots, \mu_{n})^T$ and $\bepsilon = (\epsilon_{1},
\ldots, \epsilon_{n})^T$. Here the mean vector $\bmu$ is non-random and
parametrized by a finite dimensional parameter $\theta$, and the
distribution of the error vector $\bepsilon$ is symmetric in the sense that
$\bepsilon$ has the same distribution as $-\bepsilon$. Since the error
has a symmetric but otherwise unknown distribution, the model is
semi-parametric. Examples of models of the form (\ref{eq:general_model})
are the symmetric location model (where $\mu_i=\theta\in \RR$,), and the
linear regression model (where $\mu_i=\theta^T Z_i$ for given covariates
$Z_i\in \RR^p$). Moreover, the form (\ref{eq:general_model}) includes
models with dependent errors, like linear mixed effect models.

The main goal of this paper is to prove the semi-parametric
Bernstein-von Mises (BvM) assertion for models of the form
(\ref{eq:general_model}) with symmetric error distributions.
Roughly speaking we show that the marginal posterior distribution of
the parameter of interest $\theta$ is asymptotically normal, centered on an
efficient estimator with variance equal to the inverse Fisher information
matrix. As a result, statistical inference based on the posterior
distribution satisfies frequentist criteria of optimality. 

Various sets of sufficient conditions for the semi-parametric BvM theorem
based on the full LAN (local asymptotic normality) expansion 
(\ie\ the LAN expansion with respect to both the finite and infinite
dimensional parameters \cite{mcneney2000application}) have been developed
in \cite{shen2002asymptotic, bickel2012semiparametric, castillo2015bernstein}.
The full LAN expansion, however, is conceptually inaccessible and technically
difficult to verify. Because the models we consider are adaptive
\cite{bickel1982adaptive}, we can consider a simpler type of LAN
expansion that involves only the parameter of interest, albeit that the
expansion must be valid under data distributions that differ slightly
from the one on which the expansion is centred. We call this property
{\it misspecified LAN} and prove that it holds for the models of the
form (\ref{eq:general_model}) and that, together with other regularity
conditions, it implies the semi-parametric BvM assertion. 

While the BvM theorem for parametric Bayesian models is well established 
(\eg\ \cite{le1990asymptotic, kleijn2012bernstein}), the semi-parametric BvM
theorem is still being studied very actively: initial examples
\cite{cox1993analysis, freedman1999wald} of simple semi-parametric
problems with simple choices for the prior demonstrated failures of
marginals posteriors to display BvM-type asymptotic behaviour.
Subsequently, positive semi-parametric BvM results have been established
in these and various other examples, including models in survival
analysis (\cite{kim2004bernstein, kim2006bernstein}), multivariate
normal regression models with growing numbers of parameters
(\cite{bontemps2011bernstein, johnstone2010high, ghosal1999asymptotic})
and discrete probability measures (\cite{boucheron2009bernstein}). More
delicate notions like finite sample properties and second-order
asymptotics are considered in \cite{panov2015finite, spokoiny2013bernstein,
yang2015semiparametric}. 

Regarding models of the form
(\ref{eq:general_model}), there is a sizable amount of literature on
efficient point-estimation in the symmetric location problem
(\cite{beran1978efficient, stone1975adaptive, sacks1975asymptotically})
and linear regression models (\cite{bickel1982adaptive}). By contrast,
to date {\it no efficient point-estimator} for the regression coefficients
in the linear mixed effect model has been found; the semi-parametric BvM
theorem proved below, however, implies that the Bayes estimator is
efficient! To the authors' best knowledge, this paper provides the first
efficient semi-parametric estimator in the linear mixed effect model. A
numerical study given in section~\ref{sec:simulation} supports the
view that the Bayes estimator is superior to previous methods of estimation.

This paper is organized as follows:
section~\ref{sec:general_BvM} proves the semi-parametric BvM assertion
for all smooth adaptive models (\cf\ the misspecified LAN expansion). In
sections~\ref{sec:regression} and~\ref{sec:lm} we study the linear
regression model and linear mixed effect model, respectively. For each,
we consider two common choices for the nuisance prior, a Dirichlet process
mixture and a series prior, and we show that both lead to validity of
the BvM assertion. Results of numerical studies are presented in
section~\ref{sec:simulation}. 

\subsubsection*{Notation and conventions}

For two real values $a$ and $b$, $a\wedge b$ and $a\vee b$ are
the minimum and maximum of $a$ and $b$, respectively, and 
$a_n \lesssim b_n$ signifies that $a_n$ is smaller than $b_n$ up
to a constant multiple independent of $n$.
Lebesgue measures are denoted by $\mu$; $|\cdot|$ represents the Euclidean
norm on $\RR^d$. 
The capitals
$P_\eta$, $P_{\theta,\eta}$ \etc\ denote the probability measures
associated with
densities that we write in lower case, $p_\eta$, $p_{\theta,\eta}$ \etc\
(where it is always clear from the context which dominating measure $\mu$
is involved). The corresponding log densities are indicated with $\ell_\eta$,
$\ell_{\theta,\eta}$ \etc\
Hellinger and total-variational metrics are defined as
$h^2(p_1,p_2) = \int \big(\sqrt{p_1} - \sqrt{p_2}\big)^2 d\mu$
and $d_V(p_1, p_2) = \int |p_1 - p_2| d\mu$,
respectively. 
The expectation of a random variable $X$ under a probability measure
$P$ is denoted by $P X$. The notation $P_0$ always represents the true
probability which generates the observation and $X^o = X - P_0 X$ is
the centered version of a random variable $X$. 
The indicator function for a set $A$ is denoted $1_A$.
For a class of measurable functions
$\cF$, the quantities $N(\epsilon, \cF, d)$ and $N_{[\,]}(\epsilon, \cF, d)$
represent the $\epsilon$-covering and -bracketing numbers
\cite{van1996weak} with respect to a (semi)metric $d$.

\section{Misspecified LAN and the semi-parametric BvM theorem}
\label{sec:general_BvM}

In this section, we prove the semi-parametric BvM theorem for
smooth adaptive models, \ie\ those that satisfy the misspecified
LAN expansion defined below.

\subsection{Misspecified local asymptotic normality}

Consider a sequence of statistical models
$\scrP^{(n)} = \{P_{\theta,\eta}^{(n)}: \theta \in \Theta, \eta \in \cH\}$ 
on measurable spaces $(\scrX^{(n)},\scrA^{(n)})$, parametrized by a finite
dimensional parameter $\theta$ of interest and an infinite dimensional
nuisance parameter $\eta$. Assume that $\Theta$ is a subset of
$\RR^p$, $\cH$ is a metric space equipped with the associated Borel
$\sigma$-algebra and $P_{\theta,\eta}^{(n)}$ has density $x \mapsto
p_{\theta,\eta}^{(n)}(x)$ with respect to some $\sigma$-finite measures
$\mu^{(n)}$ dominating $\scrP^{(n)}$. 

Let $X^{(n)}$ be a $\scrX^{(n)}$-valued
random element following $P_0^{(n)}$ and assume that
$P_0^{(n)} = P_{\theta_0,\eta_0}^{(n)}$ for some $\theta_0\in\Theta$ and
$\eta_0 \in \cH$. We say that a sequence of statistical models
$\scrP^{(n)}$ satisfies the
{\it misspecified LAN expansion} if there exists a sequence of vector-valued
(componentwise) $L_2(P_0^{(n)})$-functions $(g_{n,\eta})$, a sequence
$(\cH_n)$ of measurable subsets of $\cH$ and a sequence $(V_{n,\eta})$
of $p\times p$-matrices such that,
\begin{equation}
  \label{eq:mLAN}
  \sup_{h \in K}\sup_{\eta \in \cH_n} \bigg|
    \log \frac{p^{(n)}_{\theta_n(h), \eta}}{p^{(n)}_{\theta_0, \eta}}(X^{(n)})
    - \frac{h^T}{\sqrt{n}} g_{n,\eta} (X^{(n)}) 
    + \frac{1}{2} h^T V_{n,\eta}h \bigg| = o_{P_0}(1),
\end{equation}
for every compact $K \subset \RR^p$, where $\theta_n(h)$ equals
$\theta_0+h/\sqrt{n}$. When we know $\eta_0$, property (\ref{eq:mLAN})
is nothing but the usual parametric LAN expansion, where we set
$\cH_n=\{\eta_0\}$. We refer to (\ref{eq:mLAN}) as the {\it misspecified}
LAN expansion because the base for the expansion is $(\theta_0,\eta)$
while rest-terms go to zero under $P_0$, which corresponds to the
point $(\theta_0,\eta_0)$.

Note that the misspecified
LAN expansion is simpler than the full LAN expansion used in
\cite{shen2002asymptotic, bickel2012semiparametric, castillo2015bernstein}. 
Although the misspecified LAN expansion (\ref{eq:mLAN}) can be applied only
to the adaptive cases, the verification of \eqref{eq:mLAN} is not easy
due to misspecification and the required uniformity of convergence.
LAN expansions have been shown to be valid even under misspecification:
in \cite{kleijn2012bernstein} for example, smoothness in misspecified
parametric models is expressed through a version of local asymptotic
normality under the true distribution of the data, with a likelihood
expansion around points in the model where the Kullback-Leibler (KL)-divergence with
respect to $P_0$ is minimal. In models with symmetric error, the point
of minimal KL-divergence equals exactly $\theta_0$, provided that the
misspecified $\eta$ is close enough to $\eta_0$ in the sense of $\cH_n$.
This allows the usual LAN expansion at $\theta_0$ for fixed $\eta$,
that is, the left-hand side of \eqref{eq:mLAN} is expected to be of
order $o_{P_0}(1)$. By choosing localizations $\cH_n$ appropriately,
the family of score functions $\{\score_{\theta,\eta}:\eta\in\cH_n\}$
is shown to be a Donsker class, which validates \eqref{eq:mLAN} in
models with symmetric errors, where $\score_{\theta,\eta}(x) =
\partial \ell_{\theta,\eta} (x)/\partial \theta$, 
$g_{n,\eta}(X^{(n)}) = \sum_{i=1}^n \score_{\theta_0,\eta}(X_i)$
and $V_{n,\eta} = n^{-1} P_0^{(n)} [g^{\phantom{T}}_{n,\eta} g^T_{n,\eta_0}]$.
The score function is not necessarily the pointwise derivative of the
log-likelihood, but in most examples (including the models considered
in this paper), $g_{n,\eta} = \score_{\theta_0, \eta}^{(n)}$ where
$\score_{\theta, \eta}^{(n)} =  \ell_{\theta, \eta}^{(n)}/ \partial \theta$.
From now on, since it conveys the natural meaning of derivative, we
use the notation  $\score_{\theta_0,\eta}^{(n)}$ instead of $g_{n,\eta}$.

\subsection{The semi-parametric Bernstein-von Mises theorem}

We use a product prior $\Pi=\Pi_\Theta \times \Pi_\cH$ on the Borel
$\sigma$-algebra of $\Theta\times\cH$ and denote the posterior
distribution by $\Pi(\cdot|X^{(n)})$. Note that the misspecified LAN
property gives rise to an expansion
of the log-likelihood that applies only locally in sets $\Theta_n\times\cH_n$,
where $\Theta_n=\{\theta_0 + h/\sqrt{n}: h \in K\}$ (for some compact
$K\in\RR^p$ and appropriate $\cH_n\subset\cH$). So for 
the semi-parametric BvM theorem, the score function 
$\score_{\theta_0, \eta}^{(n)}$ as well as $V_{n,\eta}$ must `behave nicely' on
$\Theta_n\times\cH_n$ and the posterior distribution must concentrate
inside $\Theta_n\times\cH_n$. Technically, these requirements are
expressed by the following two conditions.
For a matrix $A\in\RR^{n_1\times n_2}$, $\|A\|$ represents
the operator norm of $A$, defined as $\sup_{x\neq 0} |Ax|/|x|$, and
if $A$ is a square matrix, $\rho_{\rm min} (A)$ and $\rho_{\rm max}(A)$ denote
the minimum and maximum eigenvalues of $A$, respectively.
\vspace*{1ex}

\noindent{\bf Condition A.} (Equicontinuity and non-singularity)
\bea
  \label{eq:score_conti_condition}
  \sup_{\eta\in\cH_n} \left| \score_{\theta_0,\eta}^{(n)}(X^{(n)})
    - \score_{\theta_0,\eta_0}^{(n)}(X^{(n)}) \right| &=& o_{P_0}(n^{1/2}),
	\\
  \label{eq:V_conti_condition}
  \sup_{\eta\in\cH_n} \|V_{n,\eta} - V_{n,\eta_0}\| &=& o(1),
	\\
  \label{eq:V_positive}
  0 < \liminf_{n\rightarrow\infty}\rho_{\min}(V_{n,\eta_0}) \leq 
    \limsup_{n\rightarrow\infty}\rho_{\max}(V_{n,\eta_0}) &<& \infty.
\eea

\noindent{\bf Condition B.} (Posterior localization)
\bea 
    P_0^{(n)} \Pi\big(\cH_n | X^{(n)}\big) &\rightarrow
    &1, \label{eq:eta_consistency}\\
    P_0^{(n)} \Pi\big(\sqrt{n}|\theta-\theta_0| > M_n | X^{(n)}\big)
    &\rightarrow& 0, ~~ \textrm{for every $M_n \uparrow \infty$}.
	\label{eq:sqrtn_general}
\eea

Conditions like \eqref{eq:score_conti_condition} and
\eqref{eq:V_conti_condition} are to be expected in the context of
semi-parametric estimation (see, \eg, Theorem~25.54 of
\cite{van1996efficient}). Condition \eqref{eq:score_conti_condition}
amounts to \emph{asymptotic equicontinuity} and is implied whenever
scores form a Donsker class, a well-known sufficient
condition in semi-parametric efficiency (see \cite{van1996efficient}).
Condition \eqref{eq:V_conti_condition} is implied whenever the
$L_2(P_0^{(n)})$-norm of the difference between scores at $(\theta_0,\eta)$
and $(\theta_0,\eta_0)$ vanishes as $\eta$ converges to $\eta_0$
in Hellinger distance, \cf\ \eqref{eq:d2_domination}; it controls variations
of the information matrix as $\eta$ converges to $\eta_0$ with $\cH_n$.
Condition \eqref{eq:V_positive} guarantees that the Fisher information
matrix does not develop singularities as the sample size goes to infinity.

Condition \eqref{eq:eta_consistency} formulates a requirement of posterior
consistency in the usual sense, and sufficient conditions are well-known
\cite{schwartz1965bayes, barron1999consistency,
walker2004new, kleijn2013criteria}. Condition \eqref{eq:sqrtn_general}
requires $n^{-1/2}$-rate of convergence rate for the marginal posterior
distribution for the parameter of interest. Though some authors
remark that \eqref{eq:sqrtn_general} appears to be rather too strong
\cite{yang2015semiparametric}, clearly, \eqref{eq:sqrtn_general} is
a {\it necessary condition} (since it follows directly from the
BvM assertion). The proof of condition \eqref{eq:sqrtn_general} is
demanding in a technical sense and forms the most difficult part of
this analysis and most others \cite{bickel2012semiparametric}.

We say the prior $\Pi_\Theta$ is \emph{thick} at $\theta_0$ if it has
a strictly positive and continuous Lebesgue density in a neighborhood of
$\theta_0$. The following theorem states the BvM theorem for semi-parametric
models that are smooth in the sense of the misspecified LAN expansion.
  
\begin{theorem}
\label{thm:BvM_general}
Consider statistical models $\{P^{(n)}_{\theta,\eta}:
\theta \in \Theta, \eta \in \cH\}$ with a product prior
$\Pi=\Pi_\Theta \times \Pi_\cH$. Assume that $\Pi_\Theta$ is thick
at $\theta_0$ and that \eqref{eq:mLAN} as well as Conditions~A
and~B hold. Then,
\begin{equation}
  \label{eq:bvmassertion}
  \sup_B \left|\Pi\big(\sqrt{n}(\theta-\theta_0) \in B |
  X^{(n)}\big) - N_{\Delta_n, V_{n,\eta_0}^{-1}}(B)\right| \rightarrow 0,
\end{equation}
in $P_0^{(n)}$-probability, where,
\[
  \Delta_n= \frac{1}{\sqrt{n}} V_{n,\eta_0}^{-1}
  \score_{\theta_0,\eta_0}^{(n)} (X^{(n)}).
\]
\end{theorem}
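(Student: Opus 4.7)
The plan is to establish pointwise approximation of the marginal posterior density of $h=\sqrt{n}(\theta-\theta_0)$ by the density of $N_{\Delta_n,V_{n,\eta_0}^{-1}}$ on arbitrary compact sets $K$, while showing that both distributions place almost all of their mass on such $K$. Since $\Pi_\Theta$ is thick at $\theta_0$, for any compact $K\subset\RR^p$ and $n$ large the marginal posterior density of $h$ satisfies
\[
  \phi_n(h)\;\propto_h\;\pi_\Theta\!\big(\theta_n(h)\big)\int_{\cH} p^{(n)}_{\theta_n(h),\eta}(X^{(n)})\,d\Pi_\cH(\eta),\qquad h\in K;
\]
let $\psi_n$ be the density of $N_{\Delta_n,V_{n,\eta_0}^{-1}}$. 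The goal is then $\int|\phi_n-\psi_n|\,dh\to 0$ in $P_0^{(n)}$-probability.

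I first localize. For any $\epsilon>0$, \eqref{eq:sqrtn_general} implies the existence of $M>0$ such that for $n$ large the posterior mass of $\{|h|>M\}$ is below $\epsilon$ on an event of $P_0^{(n)}$-probability at least $1-\epsilon$. Simultaneously, \eqref{eq:V_positive} together with the variance identity $V_{n,\eta_0}=n^{-1}P_0^{(n)}[\score^{(n)}_{\theta_0,\eta_0}(\score^{(n)}_{\theta_0,\eta_0})^T]$ and Chebyshev's inequality gives $\Delta_n=O_{P_0}(1)$, so the Gaussian mass of $\{|h|>M\}$ is likewise small for $M$ large. Finally, \eqref{eq:eta_consistency} allows the $\eta$-integral in the numerator of $\phi_n$ to be restricted to $\cH_n$ up to a $(1+o_{P_0}(1))$ factor. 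It therefore suffices to show $\int_K|\phi_n-\psi_n|\,dh\to 0$ in $P_0^{(n)}$-probability for every fixed compact $K$.

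On such a $K$, I combine the misspecified LAN \eqref{eq:mLAN} with Condition~A: the former gives, uniformly for $h\in K$ and $\eta\in\cH_n$,
\[
  \log\frac{p^{(n)}_{\theta_n(h),\eta}}{p^{(n)}_{\theta_0,\eta}}(X^{(n)})
  = \tfrac{h^T}{\sqrt{n}}\score^{(n)}_{\theta_0,\eta}(X^{(n)})-\tfrac{1}{2}h^T V_{n,\eta}h+o_{P_0}(1),
\]
while \eqref{eq:score_conti_condition} together with the boundedness of $h\in K$ shows $\tfrac{h^T}{\sqrt{n}}[\score^{(n)}_{\theta_0,\eta}(X^{(n)})-\score^{(n)}_{\theta_0,\eta_0}(X^{(n)})]=o_{P_0}(1)$ uniformly in $\eta\in\cH_n$, and \eqref{eq:V_conti_condition} gives $\tfrac{1}{2}h^T[V_{n,\eta}-V_{n,\eta_0}]h=o(1)$ uniformly. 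Completing the square yields
\[
  \log\frac{p^{(n)}_{\theta_n(h),\eta}}{p^{(n)}_{\theta_0,\eta}}(X^{(n)}) = Q_n(h) + r_n(h,\eta),\quad \sup_{h\in K,\,\eta\in\cH_n}|r_n(h,\eta)|=o_{P_0}(1),
\]
with the quadratic form $Q_n(h)=-\tfrac{1}{2}(h-\Delta_n)^T V_{n,\eta_0}(h-\Delta_n)+\tfrac{1}{2}\Delta_n^T V_{n,\eta_0}\Delta_n$ free of $\eta$. Integrating against $d\Pi_\cH$ over $\cH_n$ factorises to
\[
  \int_{\cH_n} p^{(n)}_{\theta_n(h),\eta}(X^{(n)})\,d\Pi_\cH(\eta) = e^{Q_n(h)}\,D_n\,\big(1+o_{P_0}(1)\big),
\]
where $D_n=\int_{\cH_n} p^{(n)}_{\theta_0,\eta}(X^{(n)})\,d\Pi_\cH(\eta)$ does not depend on $h$ and the $o_{P_0}(1)$ is uniform on $K$. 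Combined with $\pi_\Theta(\theta_n(h))=\pi_\Theta(\theta_0)(1+o(1))$ from thickness, the numerator of $\phi_n$ equals $\pi_\Theta(\theta_0)D_n e^{Q_n(h)}(1+o_{P_0}(1))$ uniformly on $K$. Since $\psi_n(h)$ is itself proportional in $h$ to $e^{Q_n(h)}$ and both $\phi_n,\psi_n$ are probability densities with almost all mass on $K$, matching the normalising constants yields $\sup_{h\in K}|\phi_n(h)/\psi_n(h)-1|\to 0$ in $P_0^{(n)}$-probability, hence $\int_K|\phi_n-\psi_n|\,dh\to 0$, which combined with the localization delivers \eqref{eq:bvmassertion}.

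The hardest point will be the uniformity in $\eta\in\cH_n$ of the LAN expansion and of both Condition~A substitutions: they must hold simultaneously throughout the shrinking nuisance neighbourhood $\cH_n$ because the semi-parametric posterior integrates nontrivially over $\eta$. This is exactly what the \emph{misspecified} formulation of \eqref{eq:mLAN} is designed to provide; a fixed-$\eta_0$ parametric LAN would not allow the factor $e^{Q_n(h)}$ to be pulled outside the $\eta$-integral. The residual $\cH_n^c$ contribution is a secondary technicality: by \eqref{eq:eta_consistency} the corresponding posterior mass is $o_{P_0}(1)$, and so it can be absorbed into the $(1+o_{P_0}(1))$ factor of the normalisation.
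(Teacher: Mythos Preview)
Your proposal is correct and follows essentially the same route as the paper: localize the nuisance to $\cH_n$ via \eqref{eq:eta_consistency}, combine \eqref{eq:mLAN} with \eqref{eq:score_conti_condition}--\eqref{eq:V_conti_condition} to upgrade the expansion to one centered at $\eta_0$ uniformly over $\cH_n$, integrate out $\eta$ to obtain a parametric-type LAN for the profile likelihood, and then invoke the standard parametric BvM argument together with \eqref{eq:sqrtn_general}. The paper carries out the $\eta$-integration step via a sandwich $b_1(h)\le\cdot\le b_2(h)$ and first passes to the restricted prior $\Pi_{\cH_n}$ rather than restricting the integral inside the running computation, but this is a presentational difference only.
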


\begin{proof}
Note first that \eqref{eq:eta_consistency} implies that
$\Pi_\cH(\cH_n) > 0$ for large enough $n$. Let $\Pi_{\cH_n}$ be the
probability measure obtained by restricting $\Pi_\cH$ to $\cH_n$ and
next re-normalizing, and $\Pi_{\cH_n}(\cdot|X^{(n)})$ be the
corresponding posterior distribution. Then, for any measurable set
$B$ in $\Theta$,
\bean
  \Pi(\theta \in B | X^{(n)}) =
    \Pi(\theta\in B, \eta\in\cH_n | X^{(n)})
    + \Pi(\theta\in B, \eta\in\cH_n^c | X^{(n)})\\
  = \Pi_{\cH_n}(\theta\in B| X^{(n)}) \Pi(\eta\in\cH_n | X^{(n)})
    + \Pi(\theta\in B, \eta\in\cH_n^c | X^{(n)}),
\eean
so we have,
\[
  \sup_B \Big| \Pi(\theta\in B | X^{(n)}) 
    - \Pi_{\cH_n} (\theta\in B | X^{(n)}) \Big| \rightarrow 0,
\]
in $P_0^{(n)}$-probability. Therefore it is sufficient to prove
the BvM assertion with the priors $\Pi_{\cH_n}$.

Particularly, 
\begin{equation}
  \Pi_{\cH_n}(\sqrt{n}|\theta-\theta_0| > M_n | X^{(n)}) 
  =\frac{\Pi(\sqrt{n}|\theta-\theta_0| > M_n, \eta\in\cH_n | X^{(n)})}
	{\Pi(\eta\in\cH_n| X^{(n)})},
\end{equation}
converges to 0 in $P_0^{(n)}$-probability by \eqref{eq:eta_consistency}
and \eqref{eq:sqrtn_general}. Using \eqref{eq:mLAN},
\eqref{eq:score_conti_condition} and \eqref{eq:V_conti_condition},
we obtain,
\begin{equation}
\label{eq:unif_LAN}
  \sup_{h \in K}\sup_{\eta \in \cH_n} \bigg|
    \log \frac{p^{(n)}_{\theta_n(h), \eta}}{p^{(n)}_{\theta_0, \eta}}(X^{(n)})
    - \frac{h^T}{\sqrt{n}} \score_{\theta_0,\eta_0}^{(n)} (X^{(n)}) 
    + \frac{1}{2} h^T V_{n,\eta_0}h \bigg| = o_{P_0}(1),
\end{equation}
for every compact $K \subset \RR^p$. Let,
\[
  b_1(h) = \inf_{\eta\in\cH_n} \frac{ p_{\theta_n(h),\eta}^{(n)}(X^{(n)})}
    {p_{\theta_0,\eta}^{(n)}(X^{(n)})},
  \qquad\text{and}\qquad
  b_2(h) = \sup_{\eta\in\cH_n} \frac{ p_{\theta_n(h),\eta}^{(n)}(X^{(n)})}
    {p_{\theta_0,\eta}^{(n)}(X^{(n)})}.
\]
Then, trivially, we have,
\begin{equation}
  b_1(h) \leq
  \frac{\int {p^{(n)}_{\theta_n(h), \eta}(X^{(n)})} d\Pi_{\cH_n}(\eta)}
       {\int {p^{(n)}_{\theta_0, \eta}(X^{(n)})} d\Pi_{\cH_n}(\eta)}
  \leq b_2(h),
\end{equation}
and the quantity,
\[
  \sup_{h\in K} \Big|b_k(h) 
  - \frac{h^T}{\sqrt{n}} \score_{\theta_0,\eta_0}^{(n)} (X^{(n)}) 
  + \frac{1}{2} h^T V_{n,\eta_0}h\Big|,
\]
is bounded above by the left-hand side of \eqref{eq:unif_LAN} for $k=1,2$.
As a result,
\begin{equation}
  \sup_{h \in K}
  \bigg|\log \frac{\int {p^{(n)}_{\theta_n(h), \eta}(X^{(n)})} d\Pi_{\cH_n}(\eta)}
  {\int {p^{(n)}_{\theta_0, \eta}(X^{(n)})} d\Pi_{\cH_n}(\eta)}
    - \frac{h^T}{\sqrt{n}} \score^{(n)}_{\theta_0,\eta_0}(X^{(n)}) 
    + \frac{1}{2}h^T V_{n,\eta_0}h \bigg| = o_{P_0}(1),
\end{equation}
because $|c_2| \leq |c_1|\vee|c_3|$ for all real numbers
$c_1, c_2$ and $c_3$ with $c_1 \leq c_2 \leq c_3$. The remainder of
the proof is (almost) identical to the proof for parametric models
\cite{le1990asymptotic,kleijn2012bernstein}, replacing the parametric
likelihood by $\theta \mapsto \int {p^{(n)}_{\theta, \eta}(X^{(n)})}
d\Pi_{\cH_n}(\eta)$ as in \cite{bickel2012semiparametric}, details
of which can be found in Theorem 3.1.1 of \cite{chae2015semiparametric}.
\end{proof}

\section{Semi-parametric BvM for linear regression models}
\label{sec:regression}

Let $\cH$ be the set of all continuously differentiable densities
$\eta$ defined on $\DD=(-r,r)$ (for some $r\in (0,\infty]$) such
that $\eta(x) > 0$ and $\eta(x) = \eta(-x)$ for every $x\in\DD$.
Equip $\cH$ with the Hellinger metric.
We consider a model for data satisfying,
\begin{equation}
\label{eq:lm}
  X_i = \theta^T Z_i + \epsilon_i, \quad\text{for $i=1, \ldots, n$},
\end{equation}
where $Z_i$'s are $p$-dimensional non-random covariates and the
errors $\epsilon_i$ are assumed to form an \iid\ sample from a distribution
with density $\eta\in\cH$. We prove the BvM theorem for
the regression coefficient $\theta$. 

Let $P_{\theta,\eta,i}$ denote the
probability measure with density $x \mapsto \eta(x-\theta^T Z_i)$
and $\score_{\theta,\eta,i} = \partial \ell_{\theta,\eta,i}/\partial \theta$.
Also let $P_\eta$ be the probability measure with density $p_\eta = \eta$
and $s_\eta(x) = -\partial \ell_\eta(x)/\partial x$. Let $P_{\theta,\eta}^{(n)}$
represent the product measure
$P_{\theta,\eta,1} \times \cdots \times P_{\theta,\eta,n}$
and let $\score^{(n)}_{\theta,\eta} = \sum_{i=1}^n \score_{\theta,\eta,i}$.
With slight abuse of notation, we treat $p_{\theta,\eta,i},
\ell_{\theta,\eta,i}$ and $\score_{\theta,\eta,i}$ as either functions
of $x$ or the corresponding random variables when they are evaluated
at $x=X_i$. For example, $\score_{\theta,\eta,i}$ represents either the
function $x \mapsto \score_{\theta,\eta,i}(x): \DD \mapsto \RR^p$ or
the random vector $\score_{\theta,\eta,i}(X_i)$. We treat
$p^{(n)}_{\theta,\eta}, \ell^{(n)}_{\theta,\eta}$ and
$\score^{(n)}_{\theta,\eta}$ similarly.

Let $\theta_0\in\Theta$ and $\eta_0\in\cH$ be the true regression
coefficient and error density in the model (\ref{eq:lm}). Define
specialized KL-balls in $\Theta\times\cH$ of the form,
\begin{equation}
\label{eq:B_n-def}
  B_n(\epsilon) = \Big\{ (\theta,\eta): 
  \sum_{i=1}^n K(p_{\theta_0,\eta_0,i}, p_{\theta,\eta,i}) \leq n\epsilon^2,
  \sum_{i=1}^n V(p_{\theta_0,\eta_0,i}, p_{\theta,\eta,i})\leq C_2n\epsilon^2\Big\},
\end{equation}
where $K(p_1,p_2) = \int\log (p_1/p_2) dP_1$, $V(p_1,p_2) = \int
(\log(p_1/p_2) - K(p_1,p_2))^2 dP_1$, and
$C_2$ is some positive constant (see \cite{ghosal2007convergence}).
Define the mean Hellinger distance $h_n$ on $\Theta\times\cH$ by,
\begin{equation}
\label{eq:h_n-def}
  h^2_n\big((\theta_1, \eta_1), (\theta_2,\eta_2)\big) 
    = \frac{1}{n} \sum_{i=1}^n h^2(p_{\theta_1, \eta_1,i}, p_{\theta_2,\eta_2,i}).
\end{equation}
Let $v_\eta = P_{\eta_0} [s_\eta s_{\eta_0} ]$ and,
\be \label{eq:V_n_eta-def}
  V_{n,\eta} = \frac{1}{n} P_0^{(n)} \big[\score^{(n)}_{\theta_0,\eta}
    \score^{(n) T}_{\theta_0,\eta_0}\big]. 
\ee
It is easy to see that $V_{n,\eta}= v_\eta \bZ_n$,
where $\bZ_n = n^{-1} \sum_{i=1}^n Z_i Z_i^T$.

We say that a sequence of real-valued stochastic processes
$\{Y_n(t): t \in T\}$, ($n\geq1$), is \emph{asymptotically tight} if
it is asymptotically tight in the space of bounded functions on $T$
with the uniform norm \cite{van1996weak}. A vector-valued stochastic
process is asymptotic tight if each of its components is
asymptotically tight.

\begin{theorem}
\label{th:main-linear}
Suppose that $\sup_{i\geq 1}|Z_i|\leq L$ for some constant $L > 0$,
$\liminf_n \rho_{\rm min}(\bZ_n) > 0$ and $v_{\eta_0}>0$. The prior
for $(\theta,\eta)$ is a product $\Pi=\Pi_\Theta\times \Pi_{\cH}$,
where $\Pi_\Theta$ is thick at $\theta_0$. Suppose also that there
exist an $N\geq1$, a sequence $\epsilon_n \rightarrow 0$ with
$n\epsilon_n^2\rightarrow\infty$, and partitions
$\Theta=\Theta_{n,1} \cup \Theta_{n,2}$ and $\cH=\cH_{n,1}\cup
\cH_{n,2}$ such that $\eta_0\in\cH_{n,1}$ and
\be
  \begin{split}\label{eq:rate_reg}
  \log N(\epsilon_n/36, \Theta_{n,1}\times\cH_{n,1}, h_n)
    &\leq n\epsilon_n^2, 
    \\
  \log\Pi\big(B_n(\epsilon_n)\big) 
    &\geq -\frac{1}{4}n\epsilon_n^2, 
    \\
  \log \big(\Pi_\Theta(\Theta_{n,2}) + \Pi_\cH(\cH_{n,2})\big)
    &\leq -\frac{5}{2}n\epsilon_n^2, 
  \end{split}
\ee
for all $n\geq N$. For some $\overline M_n\uparrow\infty$, with
$\epsilon_n \overline M_n \rightarrow0$, let $\cH_n 
= \{\eta\in\cH_{n,1}: h(\eta,\eta_0) < \overline M_n\epsilon_n\}$
and assume that there exist a continuous $L_2(P_{\eta_0})$-function
$Q$ and an $\epsilon_0 > 0$ such that,
\begin{equation}\label{eq:Lipsc_cond_reg}
	\sup_{|y|<\epsilon_0}\sup_{\eta\in\cH^N}\left|
	\frac{\ell_\eta(x+y)- \ell_\eta(x)}{y}\right| 
	\vee \left|\frac{s_\eta(x+y)- s_\eta(x)}{y}\right| \leq  Q(x),
\end{equation}
where $\cH^N = \cup_{n=N}^\infty\cH_n$.
Furthermore, assume that the sequence of stochastic processes,
\begin{equation}\label{eq:score_process_reg}
	\bigg\{\frac{1}{\sqrt{n}}\Big(\score_{\theta,\eta}^{(n)}
	- P_0^{(n)} \score_{\theta,\eta}^{(n)}\Big):
	|\theta-\theta_0|<\epsilon_0, \eta\in\cH^N \bigg\},
\end{equation}
indexed by $(\theta,\eta)$ is asymptotically tight. Then the assertion
of the BvM theorem~\ref{thm:BvM_general} holds for $\theta$.
\end{theorem}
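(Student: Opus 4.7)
The plan is to verify the three hypotheses of Theorem~\ref{thm:BvM_general}: the misspecified LAN expansion \eqref{eq:mLAN}, Condition~A, and Condition~B. The structural ingredient underlying all steps is that symmetry of $\eta$ forces the score $s_\eta$ to be antisymmetric, so that under the symmetric true error distribution $P_{\eta_0}$ one has $P_{\eta_0} s_\eta = 0$ for every $\eta \in \cH$. Consequently $P_0^{(n)}\score^{(n)}_{\theta_0,\eta}=0$ for each $\eta$, which kills the first-order bias in any Taylor expansion at $\theta_0$ and certifies that $\theta_0$ is the KL-minimizer of $\theta\mapsto K(P_0^{(n)},P^{(n)}_{\theta,\eta})$ for every $\eta$ with symmetric density.

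I would first dispose of Condition~A. Since $V_{n,\eta}=v_\eta\bZ_n$, non-singularity \eqref{eq:V_positive} is immediate from $v_{\eta_0}>0$, $\liminf\rho_{\min}(\bZ_n)>0$, and $\rho_{\max}(\bZ_n)\le L^2$. For \eqref{eq:V_conti_condition} one needs $|v_\eta-v_{\eta_0}|\to0$ uniformly in $\eta\in\cH_n$, which I would obtain from an auxiliary lemma asserting $L_2(P_{\eta_0})$-continuity of $\eta\mapsto s_\eta$ along Hellinger-convergent sequences in $\cH^N$, the $L_2$-envelope $Q$ from \eqref{eq:Lipsc_cond_reg} providing domination; then $v_\eta=P_{\eta_0}[s_\eta s_{\eta_0}]$ is continuous by Cauchy--Schwarz. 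For the score equicontinuity \eqref{eq:score_conti_condition}, the vanishing $P_0^{(n)}\score^{(n)}_{\theta_0,\eta}=0$ identifies $(\score^{(n)}_{\theta_0,\eta}-\score^{(n)}_{\theta_0,\eta_0})/\sqrt{n}$ with an increment at $\theta=\theta_0$ of the centred process in \eqref{eq:score_process_reg}; its asymptotic tightness, combined with $h(\eta,\eta_0)\le\overline M_n\epsilon_n\to0$ on $\cH_n$, yields the required $o_{P_0}(n^{1/2})$ bound.

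For the first half of Condition~B, consistency \eqref{eq:eta_consistency} follows from the three estimates \eqref{eq:rate_reg} by the posterior contraction theorem of \cite{ghosal2007convergence}, applied with sieves $\Theta_{n,1}\times\cH_{n,1}$ and metric $h_n$: since $h_n^2((\theta_0,\eta),(\theta_0,\eta_0))=h^2(\eta,\eta_0)$, contraction in $h_n$ transfers to $\eta$-marginal contraction on $\cH_n$. The misspecified LAN expansion \eqref{eq:mLAN} is then obtained from the integral representation $\log(p^{(n)}_{\theta_n(h),\eta}/p^{(n)}_{\theta_0,\eta})=(h/\sqrt n)^T\int_0^1 \score^{(n)}_{\theta_0+th/\sqrt n,\eta}\,dt$: the first-order term is $h^T\score^{(n)}_{\theta_0,\eta}/\sqrt n$; the mean part of the increment expands, via the Lipschitz bound on $s_\eta$ and a further symmetry-induced cancellation, to $-\tfrac12 h^T V_{n,\eta}h$; and the stochastic remainder is made $o_{P_0}(1)$ uniformly in $(h,\eta)\in K\times\cH_n$ by the asymptotic tightness of \eqref{eq:score_process_reg} together with $\eqref{eq:V_conti_condition}$.

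The main obstacle, as always in semi-parametric BvM proofs, is the $\sqrt n$-rate \eqref{eq:sqrtn_general}. I expect to proceed in two stages. A preliminary rate $|\theta-\theta_0|=O_{P_0}(\epsilon_n)$ comes from the $h_n$-contraction established above, using the lower bound $h_n^2((\theta,\eta),(\theta_0,\eta_0))\gtrsim h^2(\eta,\eta_0)+(\theta-\theta_0)^T\bZ_n(\theta-\theta_0)$ valid on a neighbourhood of $(\theta_0,\eta_0)$, combined with $\liminf\rho_{\min}(\bZ_n)>0$. To upgrade from $\epsilon_n$ to $n^{-1/2}$, I would work with the integrated likelihood $\theta\mapsto\int p^{(n)}_{\theta,\eta}(X^{(n)})\,d\Pi_{\cH_n}(\eta)$ exactly as in the proof of Theorem~\ref{thm:BvM_general}: the uniform LAN expansion derived in the previous step makes this integrated likelihood behave like a parametric likelihood, and exponentially consistent tests for $\{|\sqrt n(\theta-\theta_0)|>M_n\}$ against $\theta_0$, constructed from the efficient score statistic, then eliminate tail posterior mass. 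The technical heart of this step, and the most delicate part of the whole argument, is the uniformity in $\eta\in\cH_n$ of all remainders — supplied throughout by the envelope $Q$ and by the tightness hypothesis on the score process in \eqref{eq:score_process_reg}.
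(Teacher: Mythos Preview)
Your overall architecture matches the paper's, and the use of symmetry to kill first-order bias is exactly right. Two points need sharpening.

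For \eqref{eq:score_conti_condition}, asymptotic tightness of \eqref{eq:score_process_reg} combined with $h(\eta,\eta_0)\to0$ is not enough: tightness gives asymptotic equicontinuity in \emph{some} semimetric on the index set, but not a priori in the Hellinger metric. The paper first upgrades tightness to weak convergence of the $\theta_0$-section (marginal CLTs via Lindeberg--Feller), so that the process is asymptotically equicontinuous in its intrinsic metric $d_2(\eta,\eta_0)=\|s_\eta-s_{\eta_0}\|_{L_2(P_{\eta_0})}$; it then proves separately that $\sup_{\eta\in\cH_n}d_2(\eta,\eta_0)\to0$ via a Moore--Osgood interchange-of-limits argument using the envelope $Q$. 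The latter is your ``auxiliary lemma'' for \eqref{eq:V_conti_condition}, and you are right that it does double duty---but you should route \eqref{eq:score_conti_condition} through $d_2$, not through $h$.

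For \eqref{eq:sqrtn_general}, the paper does not construct tests. It bounds the conditional posterior $\Pi(\theta\in\Theta_n\mid\eta,X^{(n)})$ directly and uniformly in $\eta\in\cH_n$ by a Le~Cam annulus argument: a lower bound on $\int p^{(n)}_{\theta,\eta}/p^{(n)}_{\theta_0,\eta}\,d\Pi_\Theta$ near $\theta_0$ and an upper bound $p^{(n)}_{\theta_n(h),\eta}/p^{(n)}_{\theta_0,\eta}\le e^{-C|h|^2}$ on shells $M_n<|h|<\epsilon\sqrt n$, then summing over shells as in \cite{lecam1973convergence}. Both bounds rest on the uniform quadratic expansion \eqref{eq:unif_quad_reg}---the real technical core, proved by a symmetry trick in the appendix---extended beyond compact $h$-sets (this is \eqref{eq:lr_apprx_small_condition}--\eqref{eq:lr_apprx_large_condition}). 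Your ``tests from the efficient score'' would presumably need the same uniform quadratic control, so the difference is more of packaging than of substance, but as written the proposal skips the main work. Finally, the preliminary separation of $|\theta-\theta_0|$ and $h(\eta,\eta_0)$ from $h_n$-contraction (Lemma~\ref{lem:reg_consistency}) does not go through the clean quadratic lower bound you assert; the paper uses a total-variation argument instead, because no such global lower bound on $h_n$ is available.
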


Since the observations are not \iid, we consider the mean Hellinger
distance $h_n$ as in \cite{ghosal2007convergence}. Conditions
\eqref{eq:rate_reg} are required for the
convergence rate of $h_n\big((\theta,\eta),(\theta_0,\eta_0)\big)$
to be $\epsilon_n$, which in turn implies that the convergence rates
of $|\theta-\theta_0|$ and $h(\eta,\eta_0)$ are $\epsilon_n$ (\cf\
Lemma \ref{lem:reg_consistency}).
In fact, we only need to prove \eqref{eq:rate_reg} with arbitrary rate $\epsilon_n$ because the so-called no-bias condition $\sup_{\eta\in\cH_n} P_0 \score_{\theta_0, \eta}^{(n)} = o_{P_0}(n^{-1/2})$ holds trivially by the symmetry, which plays an important role to prove \eqref{eq:mLAN}-\eqref{eq:V_conti_condition} as in frequentist literature (see Chapter~25 of \cite{van1998asymptotic}).
Condition \eqref{eq:Lipsc_cond_reg}, which is technical in nature,
is easily satisfied.
For a random design, \eqref{eq:score_process_reg}
is asymptotically tight if and only if the class of score functions
forms a Donsker class,
and sufficient conditions for the latter are
well established in empirical process theory. Since observations
are not \iid\ due to the non-randomness of covariates,
\eqref{eq:score_process_reg} does not converge in distribution to
a Gaussian process. Here, asymptotic tightness of
\eqref{eq:score_process_reg} merely assures that the supremum of
its norm is of order $O_{P_0}(1)$. Asymptotic tightness holds under
a finite bracketing integral condition (where the definition of
the bracketing number is extended to non-\iid\ observations in a
natural way). For sufficient conditions for asymptotic tightness with
non-\iid\ observations, readers are referred to section~2.11 of
\cite{van1996weak}.

We prove Theorem \ref{th:main-linear} by checking the misspecified
LAN condition as well as Conditions A and B, whose proofs are sketched
in the three following subsections respectively. Detailed proofs are
provided in the appendix.

\subsection{Proof of Misspecified LAN}

Note that $P_0^{(n)} \score_{\theta_0,\eta}^{(n)} = 0$ for every
$\eta \in \cH$ by the symmetry of $\eta$ and $\eta_0$.
This enables writing the left-hand side of \eqref{eq:mLAN} as,
\[
  \log \frac{p^{(n)}_{\theta_n(h), \eta}}{p^{(n)}_{\theta_0, \eta}}(X^{(n)})
    - \frac{h^T}{\sqrt{n}} \score_{\theta_0,\eta}^{(n)} (X^{(n)}) 
    + \frac{1}{2} h^T V_{n,\eta}h = A_n(h,\eta) + B_n(h,\eta),
\]
where,
\be
  \begin{split} \label{eq:AB_n}
  A_n(h,\eta) &= \left(\ell_{\theta_n(h),\eta}^{(n)}
    - \ell_{\theta_0,\eta}^{(n)} - \frac{h^T}{\sqrt{n}}
    \score_{\theta_0,\eta}^{(n)} \right)^o,
	\\
  B_n(h,\eta) &= P_0^{(n)} \Big( \ell^{(n)}_{\theta_n(h),\eta}
    - \ell^{(n)}_{\theta_0,\eta}\Big) + \frac{1}{2} h^T V_{n, \eta}h.
  \end{split}
\ee
It suffices to prove that $A_n(h,\eta)$ and $B_n(h,\eta)$ converge to zero 
uniformly over $h\in K$ and $\eta\in\cH^N$, in
$P_0^{(n)}$-probability, for every compact set $K$.

Note that $A_n(h,\eta)$ is equal to,
\[
	\frac{h^T}{\sqrt{n}}\int_0^1\left(\score_{\theta_n(th),\eta}^{(n)}
-\score_{\theta_0,\eta}^{(n)}\right)^o dt,
\]
by Taylor expansion, so for a compact set $K$, we have,
\be\label{eq:tight_process}
	\sup_{h\in K}\sup_{\eta\in\cH^N} |A_n(h,\eta)| \lesssim
	\sup_{h\in K}\sup_{\eta\in\cH^N} \bigg|\frac{1}{\sqrt{n}}
\left(\score_{\theta_n(h),\eta}^{(n)}
  -\score_{\theta_0,\eta}^{(n)}\right)^o
\bigg|.
\ee
For fixed $h\in K$ and $\eta\in\cH^N$,
$n^{-1/2} \left(\score_{\theta_n(h),\eta}^{(n)}
-\score_{\theta_0,\eta}^{(n)}\right)^o$
converges to zero in probability because its mean is zero and its
variance is bounded by,
\[
  \begin{split}
  \frac{1}{n}\sum_{i=1}^n P_0 &\left|
    \score_{\theta_n(h),\eta,i}- \score_{\theta_0,\eta,i}\right|^2\\
  &\lesssim \frac{1}{n} \sum_{i=1}^n P_0 \left|
  s_\eta\left(X_i - \theta_n(h)^T Z_i\right) 
  - s_\eta\left(X_i - \theta_0^T Z_i\right)\right|^2\\
  &\leq \frac{1}{n} \sum_{i=1}^n
 |(\theta_n(h)-\theta_0)^T Z_i |^2 \cdot P_{\eta_0} Q^2
   \lesssim \frac{P_{\eta_0} Q^2}{n},
  \end{split}
\]
which converges to zero as $n\rightarrow \infty$.
In turn, the pointwise convergence of
$n^{-1/2} \left(\score_{\theta_n(h),\eta}^{(n)}
  -\score_{\theta_0,\eta}^{(n)}\right)^o$
to zero implies uniform convergence to zero of the right-hand
side of \eqref{eq:tight_process}, since \eqref{eq:score_process_reg}
is asymptotically tight. Thus the supremum of $|A_n(h,\eta)|$ over
$h\in K$ and $\eta\in \cH^N$ is of order $o_{P_0}(1)$.

For $B_n(h,\eta)$, we prove in Section \ref{subsubsec:1} that,
\begin{equation}
  \label{eq:unif_quad_reg}
  \sup_{\eta\in\cH^N} \left| \frac{1}{n}P_0^{(n)}
  \Big( \ell^{(n)}_{\theta,\eta} - \ell^{(n)}_{\theta_0,\eta}\Big)
  + \frac{1}{2} (\theta-\theta_0)^T V_{n,\eta} (\theta-\theta_0) \right|
  = o(|\theta-\theta_0|^2),
\end{equation}
as $\theta \rightarrow \theta_0$. 
Consequently, the supremum of $B_n(h,\eta)$ over $h \in K$ and
$\eta\in \cH^N$ converges to zero.
\qed

\subsection{Proof of Condition A}

For given $\eta,\eta_0$, let $d_2$ be the metric on $\cH$ defined by,
\be\label{eq:d2_def}
	d_2^2(\eta,\eta_0) = P_{\eta_0}(s_\eta - s_{\eta_0})^2.
\ee
In Section \ref{subsubsec:2}, it is shown
that,
\be\label{eq:d2_domination}
	\lim_{n\rightarrow\infty} \sup_{\eta\in\cH_n} d_2(\eta,\eta_0) = 0.
\ee
Let $a\in\RR^p$ be a non-zero vector and let $\sigma_n^2 = a^T\bZ_n a$.
Because $\rho_{\rm min}(\bZ_n)$ is bounded away from zero in the tail
by assumption, $\sigma_n^2$ is bounded away from zero for large enough
$n$, and so the scaled process,
\be\label{eq:score_theta0}
	\bigg\{\frac{a^T}{\sqrt{n}
          \sigma_n}\Big(\score_{\theta_0,\eta}^{(n)}
  - P_0^{(n)} \score_{\theta_0,\eta}^{(n)}\Big): \eta\in\cH^N \bigg\},
\ee
is asymptotically tight by the asymptotic tightness of
\eqref{eq:score_process_reg}. Furthermore, it converges weakly (in
the space of bounded functions with the uniform norm) to a tight
Gaussian process because it coverges marginally to a Gaussian
distribution by the Lindberg-Feller theorem. To see this, the variance
of \eqref{eq:score_theta0} for fixed $\eta$ is equal to $P_{\eta_0}
s_\eta^2$ for every $n$. In addition, 
\[
  \begin{split}
\frac{1}{n\sigma_n^2} \sum_{i=1}^n
  P_0 &|a^T\score_{\theta_0,\eta,i}|^2
    1_{\{ |a^T \score_{\theta_0,\eta,i}| > \sqrt{n} \sigma_n\epsilon\}}\\
  &= \frac{1}{n\sigma_n^2} \sum_{i=1}^n |a^T Z_i|^2
    P_{\eta_0} s_\eta^2 1_{\{|s_\eta| \geq
      \sqrt{n}\epsilon \sigma_n/|a^T Z_i|\}}\\
  &\lesssim \frac{1}{n} \sum_{i=1}^n
    P_{\eta_0} s_\eta^2 1_{\{|s_\eta| \geq
      \sqrt{n}\epsilon \sigma_n/|a^T Z_i|\}}
  \leq P_{\eta_0} s_\eta^2 1_{\{|s_\eta| \gtrsim \sqrt{n}\epsilon\}}
  = o(1),
  \end{split}
\]
for every $\epsilon > 0$ and large enough $n$. By the weak convergence
of \eqref{eq:score_theta0} to a tight Gaussian process,
\eqref{eq:score_theta0} is uniformly $d_2$-equicontinuous in
probability (see Section 1.5 of \cite{van1996weak}), because,
\[
	P_0\bigg[\frac{a^T}{\sqrt{n}\sigma_n}
\Big(\score^{(n)}_{\theta_0,\eta}- \score^{(n)}_{\theta_0,\eta'}\Big) \bigg]^2
	= \frac{1}{n\sigma_n^2}
\sum_{i=1}^n a^T Z_i Z_i^T a P_{\eta_0} \big(s_{\eta}-s_{\eta'}\big)^2
	= d^2_2(s_{\eta}, s_{\eta'}),
\]
for every $n \geq 1$. Since $P_0^{(n)} \score_{\theta_0,\eta}^{(n)} =
0$ for every $\eta\in\cH^N$, by the definition of asymptotic
equicontinuity, we have,
\[
  \sup\Biggl\{\biggl|\frac{a^T(\score_{\theta_0,\eta}^{(n)}
    - \score_{\theta_0,\eta_0}^{(n)})}{\sigma_n} \biggr|:
      d_2(\eta,\eta_0) < \delta_n,\,\eta\in\cH^N\Biggr\}
  = o_{P_0}(n^{1/2}),
\]
for every $\delta_n \downarrow 0$.
Since $\sigma_n$ is bounded away from zero for large $n$ and $a$ is
arbitrary, \eqref{eq:d2_domination} implies
\eqref{eq:score_conti_condition}.

For \eqref{eq:V_conti_condition}, note that,
\[
  \|V_{n,\eta} - V_{n,\eta_0}\|
  = \|(v_\eta-v_{\eta_0})\bZ_n\| 
  = |v_\eta-v_{\eta_0}|\cdot \|\bZ_n\|
  = \rho_{\max} (\bZ_n) \cdot |v_\eta-v_{\eta_0}|,
\]
and $\limsup_n \rho_{\rm max}(\bZ_n) < \infty$ because covariates are bounded.
Since,
\[
  |v_\eta-v_{\eta_0}| = |P_{\eta_0} (s_\eta - s_{\eta_0})s_{\eta_0}|
  \lesssim d_2(\eta,\eta_0),
\]
by the Cauchy-Schwartz inequality, we have $\|V_{n,\eta} -
V_{n,\eta_0}\| \lesssim d_2(\eta,\eta_0)$,
and thus \eqref{eq:d2_domination} implies \eqref{eq:V_conti_condition}.

Finally, since $v_{\eta_0} > 0, \liminf_n \rho_{\rm min}(\bZ_n) > 0$
and $\sup_{i\geq 1} |Z_i| \leq L$, \eqref{eq:V_positive} holds
trivially because $V_{n,\eta} = v_\eta \bZ_n$.
\qed

\subsection{Proof of Condition B}

We need the following lemma, the proof of which is found in Section
\ref{subsubsec:3}.
\begin{lemma}
\label{lem:reg_consistency} Under the conditions in
Theorem \ref{th:main-linear}, there exists $K>0$ such that for
every sufficiently small $\epsilon > 0$ and large enough $n$,
$\eta\in\cH_n$ and $h_n\big((\theta,\eta),(\theta_0,\eta_0)\big)
< \epsilon$ imply $|\theta-\theta_0| < K\epsilon$ and
$h(\eta,\eta_0) < K\epsilon$. 
\end{lemma}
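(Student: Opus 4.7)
The plan is to apply the triangle inequality through the intermediate point $(\theta,\eta_0)$, using translation invariance of Lebesgue measure to decouple the parameter of interest from the nuisance. Since $h^2(p_{\theta,\eta,i}, p_{\theta,\eta_0,i}) = h^2(\eta,\eta_0)$ by a change of variable, one has $h_n((\theta,\eta),(\theta,\eta_0)) = h(\eta,\eta_0)$ for any $\theta$. Combined with $\eta\in\cH_n$ (so $h(\eta,\eta_0) < \overline M_n\epsilon_n$), the triangle inequality gives
\[
h_n((\theta,\eta_0),(\theta_0,\eta_0)) \;\leq\; h_n((\theta,\eta),(\theta_0,\eta_0)) + h(\eta,\eta_0) \;<\; \epsilon + \overline M_n\epsilon_n.
\]
Since $\overline M_n\epsilon_n \to 0$, the right-hand side is small for large $n$, and the task reduces to controlling $|\theta-\theta_0|$ via the pure-location quantity $h_n^2((\theta,\eta_0),(\theta_0,\eta_0)) = n^{-1}\sum_i \psi((\theta-\theta_0)^T Z_i)$, where $\psi(t) := h^2(\eta_0(\cdot-t), \eta_0)$.

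The next step is to establish two-sided bounds on $\psi$. A Cauchy-Schwarz argument applied to $\sqrt{\eta_0(x+t)}-\sqrt{\eta_0(x)} = \frac{1}{2}\int_0^t \eta_0'(x+s)/\sqrt{\eta_0(x+s)}\, ds$ yields the upper bound $\psi(t) \leq (v_{\eta_0}/4)\,t^2$, with $v_{\eta_0} = P_{\eta_0}s_{\eta_0}^2 < \infty$ by assumption. For the lower bound, I combine the local asymptotic $\psi(t)/t^2 \to v_{\eta_0}/4 > 0$ as $t\to 0$, continuity and strict positivity of $\psi$ for $t\neq 0$, and $\psi(t) \to 2$ as $|t|\to\infty$, to conclude that $\psi(t) \geq c_0(t^2 \wedge 1)$ for some $c_0 > 0$.

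Transferring to $h_n^2((\theta,\eta_0),(\theta_0,\eta_0))$, I split into cases. When $|\theta-\theta_0| \leq 1/L$, the arguments $(\theta-\theta_0)^T Z_i$ are all bounded by $1$ in modulus, so $\psi((\theta-\theta_0)^T Z_i) \geq c_0((\theta-\theta_0)^T Z_i)^2$, and together with $\liminf_n \rho_{\min}(\bZ_n) > 0$ this gives $h_n^2 \geq c|\theta-\theta_0|^2$. When $|\theta-\theta_0| > 1/L$, using the identity $\min(t_i^2,1) = t_i^2/\max(1,t_i^2) \geq t_i^2/(L^2|\theta-\theta_0|^2)$ together with $\liminf_n \rho_{\min}(\bZ_n) > 0$ yields $h_n^2 \geq c' > 0$, a positive constant independent of $\theta$. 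In either case, the smallness of $\epsilon+\overline M_n\epsilon_n$ for small $\epsilon$ and large $n$ rules out the second case and forces $|\theta-\theta_0| \lesssim \epsilon + \overline M_n\epsilon_n \lesssim \epsilon$.

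Finally, reversing the triangle inequality gives $h(\eta,\eta_0) \leq h_n((\theta,\eta),(\theta_0,\eta_0)) + h_n((\theta_0,\eta_0),(\theta,\eta_0))$, and the upper bound $\psi(t) \leq (v_{\eta_0}/4)\,t^2$ combined with bounded covariates gives $h_n^2((\theta_0,\eta_0),(\theta,\eta_0)) \leq (v_{\eta_0}/4)\,\rho_{\max}(\bZ_n)\,|\theta-\theta_0|^2 \lesssim \epsilon^2$, so $h(\eta,\eta_0) \lesssim \epsilon$. The main obstacle is establishing the global lower bound $\psi(t) \geq c_0(t^2 \wedge 1)$ and lifting it to a uniform lower bound $h_n^2((\theta,\eta_0),(\theta_0,\eta_0)) \gtrsim |\theta-\theta_0|^2 \wedge 1$; the two-case analysis is what handles the subtlety that $\psi$ saturates as $|t|\to\infty$ while $(\theta-\theta_0)^T Z_i$ can be large when $|\theta-\theta_0|$ is large.
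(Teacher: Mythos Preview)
Your argument is correct for the lemma as stated, but it takes a genuinely different route from the paper's, and the difference has consequences downstream.

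The paper does not pass through $(\theta,\eta_0)$ and does not use the hypothesis $\eta\in\cH_n$ to bound $h(\eta,\eta_0)$ a priori. Instead it lower-bounds $h^2(p_{\theta,\eta,i},p_{\theta_0,\eta_0,i})$ directly, for arbitrary symmetric $\eta$, via $h^2\geq d_V^2/4$ and the test set $B=[\theta^TZ_i,\infty)$: symmetry of $\eta$ forces $P_{\theta,\eta,i}(B)=1/2$, while $P_{\theta_0,\eta_0,i}(B)=\int_{(\theta-\theta_0)^TZ_i}^\infty\eta_0$, yielding
\[
h^2(p_{\theta,\eta,i},p_{\theta_0,\eta_0,i})\;\geq\;C^2\bigl(|(\theta-\theta_0)^TZ_i|\wedge\delta\bigr)^2.
\]
A counting argument combined with $\liminf_n\rho_{\min}(\bZ_n)>0$ then gives $|\theta-\theta_0|\lesssim\epsilon$ with an $n$-threshold independent of $\epsilon$, and $h(\eta,\eta_0)\lesssim\epsilon$ follows by the same triangle-plus-upper-bound step you use at the end.

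Your approach, by contrast, exploits $h(\eta,\eta_0)<\overline M_n\epsilon_n$ from $\eta\in\cH_n$ and reduces to the pure-location quantity $h_n((\theta,\eta_0),(\theta_0,\eta_0))$. This is cleaner in one respect---you only need properties of $\psi(t)=h^2(\eta_0(\cdot-t),\eta_0)$---but it produces $|\theta-\theta_0|\lesssim\epsilon+\overline M_n\epsilon_n$, which becomes $\lesssim\epsilon$ only when $n$ is large \emph{relative to $\epsilon$}. More importantly, the paper invokes the lemma to establish $\Pi(\cH_n\mid X^{(n)})\to 1$: there one starts from $\eta\in\cH_{n,1}$ with $h_n((\theta,\eta),(\theta_0,\eta_0))$ small and must \emph{deduce} $h(\eta,\eta_0)<\overline M_n\epsilon_n$. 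The paper's median-set argument handles this because it never needs $\eta\in\cH_n$; your argument would be circular at that point. So while both proofs establish the lemma as written, the paper's proof in fact delivers a stronger statement---valid for any symmetric $\eta$ and with uniform $n$-threshold---which is what the subsequent applications actually require.
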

Under the conditions in Theorem \ref{th:main-linear},
it is well known (see Theorem~4 of \cite{ghosal2007convergence}) that,
\be
\label{eq:ghoshal}
  P_0^{(n)} \Pi\Big((\theta,\eta) \in \Theta_{n,1}\times\cH_{n,1}: 
    h_n\big((\theta,\eta),(\theta_0,\eta_0)\big) \leq M_n \epsilon_n 
    \big| X^{(n)}\Big) \rightarrow 1,
\ee
for every $M_n\rightarrow \infty$.
Thus Lemma \ref{lem:reg_consistency} implies
\eqref{eq:eta_consistency}.

For \eqref{eq:sqrtn_general}, let $\epsilon > 0$ be a sufficiently
small constant and $(M_n)$ be a real sequence such
that $M_n \rightarrow \infty$ and $M_n/\sqrt{n}\rightarrow 0$. 
Also, let $\Theta_n = \{\theta\in\Theta_{n,1}: M_n/\sqrt{n}
< |\theta-\theta_0| \leq \epsilon\}$. Since,
\[
  \begin{split}
  \Pi\bigl(&\sqrt{n}|\theta-\theta_0| > M_n \bigm| X^{(n)}\bigr)\\
  &= \Pi\bigl(|\theta-\theta_0| > \epsilon \bigm| X^{(n)}\bigr)
  + \int \Pi\bigl(\theta\in\Theta_n\bigm|\eta,X^{(n)}\bigr)
    d\Pi(\eta|X^{(n)})\\
  &\leq \Pi\bigl(|\theta-\theta_0| > \epsilon \bigm| X^{(n)}\bigr)
  + \sup_{\eta\in\cH_n}\Pi\bigl(\theta \in \Theta_n \bigm| \eta, X^{(n)}\bigr)
  + \Pi(\eta\in\cH_n^c| X^{(n)}),
  \end{split}
\]
and $\Pi\bigl(|\theta-\theta_0| > \epsilon \bigm| X^{(n)}\bigr)
\vee \Pi(\eta\in\cH_n^c| X^{(n)})$ converges to 0 in $P_0^{(n)}$-probability 
due to (\ref{eq:ghoshal}) with Lemma \ref{lem:reg_consistency},
it suffices to show that,
\be
\label{eq:unif_conv_cond_posterior}
  \sup_{\eta\in\cH_n}\Pi\bigl(\theta \in \Theta_n \bigm| \eta, X^{(n)}\bigr)
  \rightarrow 0,
\ee
in $P_0^{(n)}$-probability.
Note that,
\[
  \Pi\bigl(\theta \in \Theta_n \bigm| \eta, X^{(n)}\bigr)
  = \frac{\int_{\Theta_n} p^{(n)}_{\theta,\eta} / p^{(n)}_{\theta_0,\eta}(X^{(n)})
    \,d\Pi_\Theta(\theta)}
    {\int p^{(n)}_{\theta,\eta} / p^{(n)}_{\theta_0,\eta}(X^{(n)})
    \,d\Pi_\Theta(\theta)},
\]
by Bayes's rule.
In Section \ref{subsubsec:4}, we prove that 
we can choose $C > C_1 > 0$ and $C_2 > 0$ such that,
\be
\label{eq:AB_n_sqrtn}
P_0^{(n)}(A_n\cap B_n)\rightarrow 1,
\ee
where,
\be
\begin{split}
  A_n &= \left\{\inf_{\eta\in\cH_n} \int_\Theta
    \frac{p^{(n)}_{\theta, \eta}}{p^{(n)}_{\theta_0, \eta}}
    \,d\Pi_\Theta(\theta) \geq C_2 
    \left(\frac{M_n}{\sqrt{n}}\right)^p e^{-C_1 M_n^2} \right\},\\
  B_n &= \left\{\sup_{M_n < |h| < \epsilon \sqrt{n}}
    \sup_{\eta\in\cH_n} \frac{p^{(n)}_{\theta_n(h), \eta}}
    {p^{(n)}_{\theta_0, \eta}} e^{C |h|^2}\leq 1\right\}.
\end{split}
\ee
The remainder of the proof is similar to that of \cite{lecam1973convergence}.
Let $\Omega_n=A_n\cap B_n$, 
\[
  \Theta_{n,j} = \{\theta_n(h) \in \Theta_n: j M_n \leq |h| < (j+1) M_n\},
\]
and $J$ be the minimum among $j$'s satisfying $(j+1)M_n/\sqrt{n} > \epsilon$.
Since $\Pi_\Theta$ is thick at $\theta_0$ and $\epsilon$ is sufficiently small,
\[
  \Pi_\Theta(\Theta_{n,j}) \leq D \cdot \big((j+1)M_n/\sqrt{n}\big)^p,
\]
for some constant $D > 0$. Then on $\Omega_n$, 
\bean
  \sup_{\eta\in\cH_n} \Pi(\theta \in \Theta_n | \eta, X^{(n)})
  &\leq& \frac{e^{C_1 M_n^2}}{C_2 (M_n/\sqrt{n})^p} 
  \sup_{\eta\in\cH_n} \int_{\Theta_n} 
  \frac{p^{(n)}_{\theta, \eta}}{p^{(n)}_{\theta_0, \eta}} d\Pi_\Theta(\theta)\\
  &\leq& \frac{e^{C_1 M_n^2}}{C_2 (M_n/\sqrt{n})^p}
  \sum_{j=1}^J \Pi_\Theta(\Theta_{n,j}) \sup_{\theta \in \Theta_{n,j}}
  \sup_{\eta\in\cH_n}\frac{p^{(n)}_{\theta, \eta}}{p^{(n)}_{\theta_0, \eta}}.
\eean
Since $\sup_{\theta \in \Theta_{n,j}} \sup_{\eta\in\cH_n}
p^{(n)}_{\theta, \eta}/p^{(n)}_{\theta_0, \eta}\leq \exp(-C j^2 M_n^2)$
on $\Omega_n$, we have,
\begin{equation}
  \label{eq:series_to0}
  \sup_{\eta\in\cH_n} \Pi(\theta \in \Theta_n | \eta, X^{(n)}) 
  \leq C_2^{-1} D e^{C_1M_n^2} \sum_{j=1}^J (j+1)^p e^{-Cj^2M_n^2},
\end{equation}
on $\Omega_n$. Since $C > C_1$, the term on the right-hand side of
\eqref{eq:series_to0} converges to zero as $n \rightarrow \infty$, so
we conclude that \eqref{eq:unif_conv_cond_posterior} holds.

\subsection{Examples} \label{ssec:ex-reg}

Conditions in Theorem \ref{th:main-linear} depend particularly
on the choice of prior for the nuisance parameter $\eta$. In this
subsection, we verify the conditions in Theorem \ref{th:main-linear}
for two priors: a symmetric Dirichlet mixture of normal
distributions and a random series prior on a smoothness class. 
For a given density $p$ on $\DD$, its \emph{symmetrization} $\bar p$
is defined by $\bar p = (p + p^-) / 2$, where $p^-(x) = p(-x)$ for all
$x\in\DD$. We can construct a prior on $\cH$ by putting a prior on
$p\in\widetilde \cH$ and symmetrizing it, where $\widetilde \cH$ is the
set of every density on $\DD$ whose symmetrization belongs to $\cH$.
Obviously, we have $\cH \subset \widetilde \cH$. In this subsection,
let $\Pi_{\widetilde\cH}$ be a probability measure on $\widetilde\cH$
and $\Pi_\cH$ be the corresponding probability measure on $\cH$.
Hellinger entropy bounds and prior concentration rates around KL
neighborhoods are well known for various choices of
$\Pi_{\widetilde\cH}$, so the following lemma is useful to prove
\eqref{eq:rate_reg}.

\begin{lemma}\label{lem:conv_rate}
For a subset $\widetilde \cH_0$ of $\widetilde\cH$ containing
$\eta_0$, suppose that there exists a function $\widetilde Q$
such that $\sup_{\eta\in\widetilde \cH_0} P_{\eta} \widetilde Q^2 <
\infty$, and for every $x$ and sufficiently small $y$,
\be
  \label{eq:lipschitz_bd_lem}
  \sup_{\eta\in \widetilde \cH_0}
  \left|\frac{\log \eta(x+y)-\log\eta(x)}{y}\right| \leq \widetilde Q(x).
\ee
Furthermore, assume that for large enough $n$,
\be
  \begin{split}
  \label{eq:rate_condition_general}
  \log N(\widetilde \epsilon_n, \widetilde \cH_{n,1}, h)
    &\lesssim n\widetilde\epsilon_n^2,\\
  \log \Pi_{\widetilde\cH}\big(\{\eta\in\widetilde\cH:
    K(\eta_0,\eta) \leq \widetilde\epsilon_n^2, V(\eta_0, \eta)
  \leq \widetilde\epsilon_n^2\}\big)
    &\gtrsim -n\widetilde\epsilon_n^2,\\
  \log \Pi_{\widetilde\cH} (\widetilde\cH_{n,2})
    &\leq -\frac{5}{2} n\widetilde\epsilon_n^2 M_n^2,
  \end{split}
\ee
for some partition $\widetilde\cH = \widetilde\cH_{n,1}
\cup \widetilde\cH_{n,2}$  with $\eta_0 \in \widetilde \cH_{n,1}
\subset \widetilde\cH_0$ and sequences
$\widetilde\epsilon_n\rightarrow0$, $M_n\rightarrow\infty$ with
$\widetilde\epsilon_n \gtrsim n^{-1/2}\log n$. If $\Theta$ is compact
and $\sup_{i\geq 1} |Z_i| \leq L$, then, for any $\Pi_\Theta$ that is
thick at $\theta_0$, the product prior $\Pi_\Theta\times\Pi_\cH$
satisfies \eqref{eq:rate_reg} with some $\cH_{n,1} \subset \cH_0$,
$\Theta_{n,1}=\Theta$ and $\epsilon_n = \widetilde\epsilon_n M_n$,
where $\cH_0$ is the set of symmetrizations of $p \in \widetilde\cH_0$.
\end{lemma}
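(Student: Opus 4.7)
The plan is to take $\Theta_{n,1}=\Theta$, $\Theta_{n,2}=\emptyset$, $\cH_{n,1}=\{\bar p : p\in\widetilde{\cH}_{n,1}\}$, $\cH_{n,2}=\cH\setminus\cH_{n,1}$, and $\epsilon_n=\widetilde{\epsilon}_n M_n$, and then to verify the three bounds of \eqref{eq:rate_reg} in turn. Three facts about the symmetrization map $p\mapsto\bar p$ will be used repeatedly. First, joint convexity of the squared Hellinger distance (and of $K$) combined with the symmetry of $\eta_0$ gives the contractions $h(\bar p,\bar q)\leq h(p,q)$ and $K(\eta_0,\bar\eta)\leq K(\eta_0,\eta)$. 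Second, from the sandwich $\min(\eta,\eta^-)\leq \bar\eta\leq\max(\eta,\eta^-)$ one gets $|\log(\eta_0/\bar\eta)|\leq \max(|\log(\eta_0/\eta)|,|\log(\eta_0/\eta^-)|)$; combined with $V(\eta_0,\bar\eta)\leq P_{\eta_0}[(\log(\eta_0/\bar\eta))^2]$ and the identity $P_{\eta_0}[(\log(\eta_0/\eta^-))^2]=P_{\eta_0}[(\log(\eta_0/\eta))^2]$ (by the change of variables $x\mapsto -x$ and the symmetry of $\eta_0$), this yields $V(\eta_0,\bar\eta)\leq 2(K(\eta_0,\eta)^2+V(\eta_0,\eta))$. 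Third, the Lipschitz bound \eqref{eq:lipschitz_bd_lem} transfers to $\bar\eta$: the inequality $\bar\eta(x+y)\leq \exp(|y|\max(\widetilde Q(x),\widetilde Q(-x)))\bar\eta(x)$ implies $|\log\bar\eta(x+y)-\log\bar\eta(x)|\leq |y|\widetilde Q^*(x)$ with $\widetilde Q^*(x)=\widetilde Q(x)+\widetilde Q(-x)$, and $P_{\eta_0}(\widetilde Q^*)^2\leq 4 P_{\eta_0}\widetilde Q^2<\infty$ again by the symmetry of $\eta_0$.

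For the entropy bound, the triangle inequality gives $h_n((\theta,\eta),(\theta',\eta'))\leq h(\eta,\eta')+\max_i h(\eta',\eta'(\cdot-(\theta-\theta')^T Z_i))$, and the translation term is $O(\sqrt{L|\theta-\theta'|})$ uniformly over $\eta'\in\cH_{n,1}$ via $h^2\leq K$ and the transferred Lipschitz bound. A Euclidean $c\epsilon_n^2$-cover of the compact set $\Theta$ (of log-size $\lesssim p\log n$) combined with a Hellinger $\widetilde\epsilon_n$-cover of $\widetilde{\cH}_{n,1}$ (of log-size $\lesssim n\widetilde\epsilon_n^2$), pushed forward by the Hellinger-contractive map $p\mapsto\bar p$, yields an $\epsilon_n/36$-cover of $\Theta\times\cH_{n,1}$ in $h_n$ of log-size $\lesssim p\log n+n\widetilde\epsilon_n^2\leq n\widetilde\epsilon_n^2 M_n^2=n\epsilon_n^2$ for large $n$, since $n\widetilde\epsilon_n^2\gtrsim\log^2 n$.

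For the prior concentration, write $v_i=(\theta-\theta_0)^T Z_i$ and decompose $K(p_{\theta_0,\eta_0,i},p_{\theta,\bar\eta,i})=K(\eta_0,\bar\eta)+P_{\eta_0}[\log(\bar\eta(\cdot)/\bar\eta(\cdot-v_i))]$. The second term is at most $L|\theta-\theta_0|\sqrt{P_{\eta_0}(\widetilde Q^*)^2}$ by the transferred Lipschitz bound and Cauchy--Schwarz; the same decomposition with $(a+b)^2\leq 2a^2+2b^2$ gives $V(p_{\theta_0,\eta_0,i},p_{\theta,\bar\eta,i})\lesssim K(\eta_0,\bar\eta)^2+V(\eta_0,\bar\eta)+L^2|\theta-\theta_0|^2$. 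Therefore the shrinking neighborhood $U_n=\{(\theta,\eta): |\theta-\theta_0|\leq c\epsilon_n^2,\; K(\eta_0,\eta)\vee V(\eta_0,\eta)\leq\widetilde\epsilon_n^2\}$ is mapped into $B_n(\epsilon_n)$ under $(\theta,\eta)\mapsto(\theta,\bar\eta)$ by virtue of the contractions of the first paragraph; thickness of $\Pi_\Theta$ and the concentration assumption in \eqref{eq:rate_condition_general} give $\Pi(U_n)\geq c(\epsilon_n^2)^p \exp(-c' n\widetilde\epsilon_n^2)$, hence $\log\Pi(B_n(\epsilon_n))\geq -c' n\widetilde\epsilon_n^2-O(\log n)\geq -\tfrac{1}{4}n\epsilon_n^2$ for large $n$ since $M_n\to\infty$.

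Finally, since $\bar\eta\in\cH_{n,1}$ whenever $\eta\in\widetilde{\cH}_{n,1}$, one has $\{\eta\in\widetilde{\cH}: \bar\eta\in\cH_{n,2}\}\subset\widetilde{\cH}_{n,2}$, so $\Pi_\cH(\cH_{n,2})\leq \Pi_{\widetilde{\cH}}(\widetilde{\cH}_{n,2})\leq \exp(-\tfrac{5}{2}n\widetilde\epsilon_n^2 M_n^2)=\exp(-\tfrac{5}{2}n\epsilon_n^2)$, and $\Pi_\Theta(\Theta_{n,2})=0$. The main technical obstacle lies in the first paragraph---the $V$-contraction under symmetrization and the transfer of the Lipschitz property to $\bar\eta$---both of which rest on the sandwich $\min(\eta,\eta^-)\leq\bar\eta\leq\max(\eta,\eta^-)$ and on the symmetry of $\eta_0$, which permits replacing $\eta^-$-contributions by $\eta$-contributions under $P_{\eta_0}$.
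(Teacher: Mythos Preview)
Your proof is correct and follows essentially the same architecture as the paper's: establish symmetrization contractions for $h$, $K$, $V$; derive Lipschitz-based translation bounds for the product densities $p_{\theta,\eta,i}$; then verify the three inequalities of \eqref{eq:rate_reg} one by one. The paper packages the symmetrization and translation steps as two blocks of inequalities, \eqref{eq:symm_bd} and \eqref{eq:prod_bd}, proved in subsections~\ref{subsubsec:5} and~\ref{subsubsec:6}, and then assembles them exactly as you do.

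There are only two minor differences in execution. First, for the Hellinger translation bound the paper computes directly via $\big(\sqrt{\eta(x+y)}-\sqrt{\eta(x)}\big)^2 \leq y^2 (\dot\eta/\eta)^2 \eta$ to obtain the sharper $h(p_{\theta_1,\eta,i},p_{\theta_2,\eta,i})\lesssim |\theta_1-\theta_2|$, whereas you route through $h^2\leq K$ and get only $O(\sqrt{|\theta_1-\theta_2|})$; both suffice because $\Theta$ is compact and the resulting $\log\epsilon_n^{-1}$ entropy contribution is negligible against $n\epsilon_n^2$. Second, you are more explicit than the paper about transferring the Lipschitz bound \eqref{eq:lipschitz_bd_lem} from $\widetilde\cH_0$ to its symmetrization $\cH_0$ by constructing $\widetilde Q^*(x)=\widetilde Q(x)+\widetilde Q(-x)$; the paper's \eqref{eq:prod_bd} tacitly uses the Lipschitz bound for $\eta\in\cH_0$ directly. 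Note, however, that your entropy step needs $\sup_{\eta'\in\cH_{n,1}} P_{\eta'}(\widetilde Q^*)^2<\infty$, not just $P_{\eta_0}(\widetilde Q^*)^2<\infty$; this follows once one observes that either $\widetilde Q$ can be taken symmetric or $\widetilde\cH_0$ is closed under reflection (both hold in the two examples), but strictly speaking neither is part of the lemma's hypotheses, and the paper's argument has the same implicit assumption.
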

\begin{proof}
For any pair of densities $p$ and $q$ on $\DD$, it is shown in
Section \ref{subsubsec:5} that,
\be
  \label{eq:symm_bd}
  \begin{split}
  h(\bar p, \bar q) &\leq \sqrt 2 h(p,q),\quad
  K(\bar p, \bar q) \leq K(\bar p,q),\\
  V(\bar p, \bar q) &\leq 4\big( V(\bar p,q) + K^2 (\bar p,q)\big),
  \end{split}
\ee
It is also shown in Section \ref{subsubsec:6} that there exist constants $C >0$ and $\epsilon > 0$ such that,
\be
\begin{split}\label{eq:prod_bd}
  h(p_{\theta_1,\eta_1,i}, p_{\theta_2,\eta_2,i})
  &\leq C \big(|\theta_1-\theta_2| + h(\eta_1,\eta_2)\big),
  \\
  K(p_{\theta_0,\eta_0,i}, p_{\theta,\eta,i})
  &\leq C \big(|\theta-\theta_0| + K(\eta_0,\eta)\big),
  \\
  V(p_{\theta_0,\eta_0,i}, p_{\theta,\eta,i})
  &\leq C \big(|\theta-\theta_0|^2 + V(\eta_0,\eta) + K^2(\eta_0,\eta)\big),
\end{split}\ee
for all $\eta_1,\eta_2,\eta\in \cH_0$, $i \geq 1$ and
$\theta_1,\theta_2,\theta$ with
$|\theta_1-\theta_2| \vee |\theta-\theta_0| < \epsilon$.

Let $\cH_{n,1}$ be the set of symmetrizations of
$p \in \widetilde\cH_{n,1}$. By the first inequalities of
\eqref{eq:symm_bd} and \eqref{eq:prod_bd}, there is a $C_1 >0$
such that for large enough $n$,
\[
  \begin{split}
  \log N(\epsilon_n/36,& \Theta_{n,1}\times\cH_{n,1}, h_n)\\
  &\lesssim \log N(C_1 \epsilon_n, \Theta_{n,1}, |\cdot|)
    + \log N(C_1\epsilon_n, \widetilde\cH_{n,1}, h)\\
  &\lesssim \log\epsilon_n^{-1} + n\widetilde\epsilon_n^2\leq n\epsilon_n^2,
\end{split}
\]
where the last inequality follows from
$\epsilon_n > \widetilde\epsilon_n \gtrsim n^{-1/2} \log n$, so
$\log\epsilon_n^{-1} \leq \log ( n^{1/2}/\log n) 
\leq \log n = o(n\epsilon_n^2)$. The second and third inequalities
of \eqref{eq:symm_bd} and \eqref{eq:prod_bd}, with $p=\bar p =
\eta_0$, imply that there exists a constant $C_2 > 0$ such that,
\bean
	\log \Pi(B_n(\epsilon_n)) 
	&\geq& \log \Pi_{\widetilde\cH}\big(\{\eta\in\widetilde\cH:
  K(\eta_0,\eta) \leq  C_2 \epsilon_n^2, V(\eta_0, \eta)
  \leq C_2 \epsilon_n^2\}\big)
	\\
	&& \qquad + \log \Pi_\Theta (\{\theta: |\theta-\theta_0|
\leq C_2 \epsilon_n^2\})
	\\
	&\geq& \log \Pi_{\widetilde\cH}\big(\{\eta\in\widetilde\cH:
  K(\eta_0,\eta) \leq  \widetilde \epsilon_n^2, V(\eta_0, \eta)
  \leq \widetilde \epsilon_n^2\}\big)
	\\
	&& \qquad + \log \Pi_\Theta (\{\theta: |\theta-\theta_0|
\leq \widetilde\epsilon_n^2\})
	\\
	&\gtrsim& -n\widetilde\epsilon_n^2  + \log (\widetilde\epsilon_n^2) 
	\gtrsim -n\widetilde\epsilon_n^2 - \log n \geq -n\epsilon_n^2/4,
\eean
for large enough $n$.
Finally, since,
\bean
	\log \big(\Pi_\cH(\cH_{n,2})\big) \leq \log
  \big(\Pi_{\widetilde\cH} (\widetilde\cH_{n,2})\big)
  \leq -\frac{5}{2} n \epsilon_n^2,
\eean
the proof is complete.
\end{proof}

\subsubsection{Symmetric Dirichlet mixtures of normal distributions}
\label{sssec:dpm}

We consider a symmetrized Dirichlet process mixture of normal densities
for the prior of $\eta$.  Dirichlet process mixture priors are 
popular and the asymptotic behavior of the
posterior distribution is well-studied. A random density $\eta$ is said
to follow a Dirichlet process mixture of normal densities
\cite{lo1984class} if $\eta(x) = \int\phi_\sigma(x-z)dF(z,\sigma)$, 
where $F \sim {\rm DP}(\alpha, H)$ and $\phi_\sigma$ is the density of
the normal distribution with mean 0 and variance $\sigma^2$. Here,
${\rm DP}(\alpha, H)$ denotes the Dirichlet process 
with precision $\alpha>0$ and mean probability measure $H$ on
$\RR \times (0,\infty)$ \cite{ferguson1973bayesian}.


For given positive numbers $\sigma_1, \sigma_2$, and $M$ with
$\sigma_1 < \sigma_2$, let $\cF$ be the set of all distribution
functions supported on $[-M,M]\times[\sigma_1,\sigma_2]$,
and  let $\widetilde\cH_0$ be the set of all densities $\eta$ on $\RR$ 
of the form $\eta(x) = \int\phi_\sigma(x-z)dF(z,\sigma)$ for $F\in \tilde{\cF}$.
Then it is easy to show that $\cH_0$, the symmetrization of $\widetilde\cH_0$, 
is the set of all $\eta\in\widetilde\cH_0$, where $F\in\cF$ 
with $dF(z,\sigma) = dF(-z,\sigma)$. If $F \sim {\rm DP}(\alpha, H)$,
where $H$ has a positive and continuous density supported on 
$[-M,M]\times[\sigma_1,\sigma_2]$, the corresponding Dirichlet
process mixture prior and its symmerization, denoted by
$\Pi_{\widetilde\cH}$ and $\Pi_\cH$, respectively, have full support
on $\widetilde\cH_0$ and $\cH_0$ relative to the Hellinger topology.

\begin{corollary}\label{cor:ex-dpm}
Suppose that $\sup_{i\geq 1}|Z_i| \leq L$ and
$\liminf_n \rho_{\min}(\bZ_n) > 0$.
With the symmetrized Dirichlet process mixture prior described
above for $\eta$, the BvM
theorem holds for the linear regression model provided that
$\eta_0\in\cH_0$ and that 
$\Pi_\Theta$ is compactly supported and thick at $\theta_0$.
\end{corollary}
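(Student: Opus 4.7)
The plan is to verify each hypothesis of Theorem \ref{th:main-linear}. Boundedness of covariates and non-degeneracy of $\bZ_n$ are given, and thickness of $\Pi_\Theta$ is assumed. The positivity $v_{\eta_0}=P_{\eta_0}s_{\eta_0}^2>0$ is immediate because every $\eta\in\cH_0$ is a non-degenerate symmetric mixture of normals with $\sigma\in[\sigma_1,\sigma_2]$, so it is smooth with finite and strictly positive Fisher information for location. It therefore remains to check the rate condition \eqref{eq:rate_reg}, the Lipschitz envelope \eqref{eq:Lipsc_cond_reg}, and the asymptotic tightness of the score process \eqref{eq:score_process_reg}.

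For \eqref{eq:rate_reg} I would invoke Lemma \ref{lem:conv_rate} with $\widetilde\cH_0$ the class of normal mixtures whose mixing distribution is supported on $[-M,M]\times[\sigma_1,\sigma_2]$. The envelope in \eqref{eq:lipschitz_bd_lem} is obtained by bounding $|(\log\eta)'(x)|$ uniformly by $\sigma_1^{-2}(|x|+M)$, which has finite $P_\eta$-second moment uniformly over $\widetilde\cH_0$. For the entropy and prior concentration bounds at rate $\widetilde\epsilon_n\asymp n^{-1/2}(\log n)^\kappa$ I would appeal to the standard results for Dirichlet process mixtures of normals with compactly supported base measure, as in \cite{ghosal2007convergence}, taking the sieve $\widetilde\cH_{n,1}$ to correspond to mixing distributions with at most $O(n\widetilde\epsilon_n^2/\log n)$ atoms. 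The stick-breaking tail bound then ensures $\Pi_{\widetilde\cH}(\widetilde\cH_{n,2})$ decays faster than $\exp(-\tfrac{5}{2}n\widetilde\epsilon_n^2 M_n^2)$ for a suitable $M_n\uparrow\infty$.

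For \eqref{eq:Lipsc_cond_reg}, on $\cH^N\subset\cH_0$ both $\ell_\eta$ and $s_\eta=-(\log\eta)'$ are infinitely differentiable, and by direct computation their first derivatives admit a common polynomial envelope $Q(x)=C(1+|x|)$ with $C$ depending only on $\sigma_1$, $\sigma_2$ and $M$; this $Q$ lies in $L_2(P_{\eta_0})$ because $\eta_0$ has sub-Gaussian tails. For \eqref{eq:score_process_reg}, since $\score_{\theta,\eta,i}(X_i)=-Z_i\,s_\eta(X_i-\theta^T Z_i)$ with $|Z_i|\leq L$, asymptotic tightness reduces, componentwise, to a uniform CLT for the class $\cG=\{s_\eta(\cdot-\beta):|\beta|\leq L\epsilon_0,\ \eta\in\cH^N\}$ evaluated under the $n$ (varying) data distributions. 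Uniform smoothness of $s_\eta$ and the polynomial envelope yield a bracketing bound $\log N_{[\,]}(\delta,\cG,L_2(P_{\eta_0}))\lesssim \delta^{-a}$ with $a<2$, so Theorem~2.11.9 of \cite{van1996weak} delivers the required asymptotic tightness.

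The main obstacle I anticipate is the simultaneous control of (a) the entropy of the sieve $\widetilde\cH_{n,1}$, (b) the prior mass outside the sieve, and (c) the uniformity of the envelopes used for both the Lipschitz condition and the Donsker bracketing bound. Each ingredient is well documented for Dirichlet mixtures on compact base measures, but compatibility of the sieve across all three roles must be verified: compactness of the mixing support and the positivity of $\sigma_1$ are the decisive structural features that make all three bounds uniform in $\eta$.
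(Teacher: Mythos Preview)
Your strategy matches the paper's: verify the hypotheses of Theorem~\ref{th:main-linear} via Lemma~\ref{lem:conv_rate}, establish the Lipschitz envelope, and prove asymptotic tightness through the bracketing CLT (Theorem~2.11.9 of \cite{van1996weak}). Two points deserve correction, though neither is fatal.

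First, your sieve is both more complicated than necessary and, as stated, does not meet the hypothesis $\eta_0\in\widetilde\cH_{n,1}$ of Lemma~\ref{lem:conv_rate}: a mixing distribution for $\eta_0$ need not have finitely many atoms. Because the base measure $H$ here is supported on the compact set $[-M,M]\times[\sigma_1,\sigma_2]$, the entire class $\widetilde\cH_0$ already has the right Hellinger entropy (this is exactly what Theorem~6.2 of \cite{ghosal2001entropies} gives), so the paper simply takes $\widetilde\cH_{n,1}=\widetilde\cH_0$ and $\widetilde\cH_{n,2}=\emptyset$, making the remaining-mass condition vacuous and the membership of $\eta_0$ automatic. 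Your atom-count sieve and stick-breaking bound are the tools for non-compact base measures and are not needed here.

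Second, your envelope $Q(x)=C(1+|x|)$ is too small for the Lipschitz constant of $s_\eta$. Writing $\ell_\eta''(x)=\mathrm{Var}\big((x-Z)/\sigma^2\,\big|\,X=x\big)-E[1/\sigma^2\,|\,X=x]$, the variance term is $O(x^2)$ once $\sigma$ is allowed to vary over $[\sigma_1,\sigma_2]$, so the correct envelope is $Q(x)=O(x^2)$ as in the paper. This does not damage the argument, since $\eta_0$ has Gaussian tails and any polynomial envelope lies in $L_2(P_{\eta_0})$; but the bracketing entropy calculation in the appendix (which yields $\log N^n_{[]}(\delta,\cF)\lesssim\delta^{-7/4}$ via Corollary~2.7.4 of \cite{van1996weak}) uses the higher-order growth explicitly, so you should carry the right power through.
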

\begin{proof}
We may assume that $\Theta$ is compact, and let $\Theta_{n,1} = \Theta$.
It is trivial that $v_{\eta_0} > 0$.
The first and second derivatives of the map $x \mapsto \ell_\eta(x)$
are of orders $O(x)$ and $O(x^2)$, respectively, as
$x \rightarrow \infty$ (see lemma 3.2.3 of
\cite{chae2015semiparametric} for details),
and both bounds can be chosen independently of $\eta$. 
Consequently, condition \eqref{eq:Lipsc_cond_reg}
holds with $Q(x) = O(x^2)$ as $|x| \rightarrow \infty$, and
$\sup_{\eta\in\widetilde\cH_0} P_\eta Q^2 < \infty$.
The proof of Theorem 6.2 in \cite{ghosal2001entropies} implies
that \eqref{eq:rate_condition_general} holds with
$\widetilde\cH_{n,1} = \widetilde\cH_0$,
$\widetilde\epsilon_n = n^{-1/2} (\log n)^{3/2}$ and any
$M_n \rightarrow\infty$.
Thus, \eqref{eq:rate_reg} hold with $\epsilon_n = n^{-1/2} (\log n)^2$
and $\cH_{n,1} = \cH_0$.

What remains to prove for the BvM assertion is asymptotic tightness
\cf\ \eqref{eq:score_process_reg}, which is implied if for every
$a\in\RR^p$ and sufficiently small $\epsilon > 0$, the stochastic process,
\begin{equation} \label{eq:score_process2}
	\bigg\{ (\theta,\eta) \mapsto \frac{a^T}{\sqrt{n}} \sum_{i=1}^n
	\left( \score_{\theta,\eta,i}- P_0 \score_{\theta,\eta,i} \right):
	\theta\in B_\epsilon, \eta\in\cH_0\bigg\},
\end{equation}
is asymptotically tight, where $B_\epsilon$ is the open ball of radius
$\epsilon$ centred on $\theta_0$. 
In Section \ref{subsubsec:7}, we prove
the asymptotic tightness of \eqref{eq:score_process2} using the
bracketing central limit theorem.
\end{proof}

The symmetrized Dirichlet process mixture prior considered in this
subsection is restricted, in that the mixing distribution $F$ is
supported on $[-M,M] \times [\sigma_1, \sigma_2]$. This restriction
plays only a technical role (to prove  \eqref{eq:Lipsc_cond_reg}
and \eqref{eq:score_process_reg}) and it is expected
that, with some additional effort, the results could be extended
to arbitrarily small $\sigma$'s and arbitraily large $M$.

\subsubsection{Random series prior}\label{sssec:rs}

Let $W$ be a random function on $[-1/2,1/2]$ defined as a series
$W(\cdot) = \sum_{j=1}^\infty j^{-\alpha} C_j b_j (\cdot)$, where
$b_1(t)=1, b_{2j}(t) = \cos(2\pi jt), b_{2j+1}(t) = \sin(2\pi jt)$ and
$C_j$'s are \iid\ random variables drawn from a density supported
on $[-M,M]$ that is continuous and bounded away from zero.
We shall impose smoothness through the requirement that $\alpha$
be greater than $3$, so that the series is well
defined as a continuous real-valued function on $[-1/2,1/2]$ with the
first and second derivatives that are bounded uniformly by a
constant.
Let $\cW$ be the set of all functions
$w:[-1/2,1/2]\rightarrow\RR$ of the form $w(\cdot) =\sum_j a_j b_j(\cdot)$
for some sequence $(a_1, a_2, \ldots)$ with $j^\alpha |a_j| \leq M$ for all $j$.
Let $\widetilde\cH_0$ denote the set of densities $p_w$, where $w\in\cW$ and,
\[
	p_w(x) = \frac{e^{w(x)}}{\int_{-1/2}^{1/2} e^{w(y)} dy},
\]
for every $x\in\DD = (-1/2,1/2)$. Let $\cH_0$ denote the associated
space of symmetrized $\bar{p}_w$. Let $\Pi_{\widetilde\cH}$ and
$\Pi_\cH$ be the laws of random densities $p_W$ and $\bar{p}_W$,
respectively.

\begin{corollary}\label{cor:ex-series}
Suppose that $\sup_{i\geq1}|Z_i| \leq L$ and 
$\liminf_n \rho_{\min}(\bZ_n) > 0$. If $\alpha > 3$, $\eta_0\in\cH_0$,
$v_{\eta_0} > 0$, and $\Pi_\Theta$ is compactly supported and thick
at $\theta_0$, then the random series prior $\Pi_\cH$ for $\eta$ leads
to a posterior for $\theta$ that satisfies the BvM assertion
\eqref{eq:bvmassertion} in the linear regression model.
\end{corollary}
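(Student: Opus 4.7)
The plan is to apply Theorem \ref{th:main-linear} via Lemma \ref{lem:conv_rate}, verifying its hypotheses for the random-series prior, and then to check the asymptotic tightness \eqref{eq:score_process_reg} by a bracketing argument. I take $\widetilde\cH_0=\{p_w:w\in\cW\}$ and $\cH_0$ its symmetrization; write $\eta_0=\bar p_{w_0}$ with $w_0\in\cW$, which is possible because $\eta_0\in\cH_0$. Since the draws of $W$ live a.s.\ in $\cW$, I take $\widetilde\cH_{n,1}=\widetilde\cH_0$ and $\widetilde\cH_{n,2}=\emptyset$, so the remaining-mass condition in \eqref{eq:rate_condition_general} is trivial.

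The first key observation is that $\alpha>3$ guarantees $\sum_j j^{-\alpha}$, $\sum_j j^{1-\alpha}$ and $\sum_j j^{2-\alpha}$ converge, so every $w\in\cW$ and its first two derivatives are uniformly bounded by a constant $M_0$ depending only on $\alpha,M$. Consequently $p_w=e^w/\int e^w$ is uniformly bounded above and away from zero on $\DD$, and $\ell_{p_w}=w-\log\int e^w$ has first and second derivatives uniformly bounded on $\DD$. This yields the Lipschitz bound \eqref{eq:lipschitz_bd_lem} in Lemma \ref{lem:conv_rate} with a constant $\widetilde Q$, and therefore also \eqref{eq:Lipsc_cond_reg} with a constant $Q$ (the score $s_\eta=-\eta'/\eta$ is uniformly $C^1$ on $\DD$ by the smoothness and boundedness-below of $\bar p_w$).

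For \eqref{eq:rate_condition_general}, I truncate the series at level $k$: approximating $w$ by the first $k$ basis coefficients yields a sup-norm error of order $\sum_{j>k} j^{-\alpha}M\lesssim k^{1-\alpha}$. Covering the $k$-dimensional cube $[-M,M]^k$ in sup norm to precision $\epsilon$ requires $(\tfrac{M}{\epsilon})^k$ balls, so choosing $k\sim \widetilde\epsilon_n^{-1/(\alpha-1)}$ gives $\log N(\widetilde\epsilon_n,\widetilde\cH_0,h)\lesssim \widetilde\epsilon_n^{-1/(\alpha-1)}\log(1/\widetilde\epsilon_n)$, which is $\leq n\widetilde\epsilon_n^2$ for $\widetilde\epsilon_n=n^{-1/2}(\log n)^{s}$ with $s$ large enough since $\alpha>3>1$. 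For the prior mass: since the coefficients $C_j$ of $W$ are independent with densities bounded below by a positive constant on $[-M,M]$, the event $\{|C_j-C_{0,j}|<\widetilde\epsilon_n,\ j\leq k\}$ has probability at least $(c\widetilde\epsilon_n)^k$ with $k\sim\widetilde\epsilon_n^{-1/(\alpha-1)}$, so $\log\Pi_{\widetilde\cH}(\|w-w_0\|_\infty<\widetilde\epsilon_n)\gtrsim -n\widetilde\epsilon_n^2$; together with uniform bounds on $p_w$, this translates to the required KL/variance ball bound. Lemma \ref{lem:conv_rate} then supplies \eqref{eq:rate_reg} with $\epsilon_n=\widetilde\epsilon_n M_n$ for some $M_n\uparrow\infty$, and $\cH_{n,1}=\cH_0$.

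The main obstacle, and the step I would save for last, is the asymptotic tightness of the score process \eqref{eq:score_process_reg}. Because $\score_{\theta,\eta,i}(x)=-Z_i\, s_\eta(x-\theta^T Z_i)$ and $s_\eta$ is uniformly bounded and uniformly $C^1$ over $\eta\in\cH^N\subset\cH_0$, the class $\cF=\{(\theta,\eta)\mapsto s_\eta(\cdot-\theta^T Z_i):|\theta-\theta_0|<\epsilon_0,\ \eta\in\cH_0\}$ is uniformly bounded and uniformly Lipschitz (in both $\theta$ and in $w$ for the sup-norm on $\cW$). The sup-norm entropy of $\cW$ is polynomial in $1/\epsilon$ by the truncation argument above, so bracketing numbers for $\cF$ are finite at every scale and $\int_0^1\sqrt{\log N_{[\,]}(\epsilon,\cF,L_2)}\,d\epsilon<\infty$. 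Extending this bracketing CLT to the non-i.i.d.\ setting as in Section~2.11 of \cite{van1996weak} (exactly as carried out for the DPM case in Section \ref{subsubsec:7}) yields asymptotic tightness of \eqref{eq:score_process_reg}. All hypotheses of Theorem \ref{th:main-linear} are then in place, and the BvM assertion \eqref{eq:bvmassertion} follows. \qed
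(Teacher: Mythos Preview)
Your proposal is correct and follows essentially the same route as the paper: verify the hypotheses of Theorem~\ref{th:main-linear} via Lemma~\ref{lem:conv_rate} with $\widetilde\cH_{n,1}=\widetilde\cH_0$, $\widetilde\cH_{n,2}=\emptyset$, a constant Lipschitz function $Q$, and then establish asymptotic tightness by a bracketing argument parallel to the Dirichlet-mixture case. The only noteworthy differences are cosmetic: the paper handles \eqref{eq:rate_condition_general} more qualitatively (total boundedness of $\cW$ via Arzel\`a--Ascoli plus positive prior mass on sup-norm balls, without computing an explicit $\widetilde\epsilon_n$), and for the entropy in the tightness step the paper applies Theorem~2.7.1 of \cite{van1996weak} with $\alpha=d=1$ directly to the score class $\{s_\eta:\eta\in\cH_0\}$ to get $\log N(\delta,\cS,\|\cdot\|_\infty)\lesssim\delta^{-1}$, whereas you transfer the entropy of $\cW$ to the scores; both yield a finite bracketing integral.
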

\begin{proof}
We may assume that $\Theta$ is compact.
Let $W$ be the random function defined above, and let
$w_0(\cdot) = \sum_{j=1}^\infty j^{-\alpha} c_{0,j} b_j(\cdot)$ such that
$\eta_0(x) \propto e^{w_0(x)} + e^{w_0(-x)}$.
One verifies easily that the KL-divergence $K$, KL-variation $V$ and the
square Hellinger distance $h^2$, for densities $ p_w(\cdot) \propto
e^{w(\cdot)}$ are bounded by the square of the uniform norm of the difference 
between the exponents $w$.
Therefore by Lemma~\ref{lem:conv_rate},
conditions \eqref{eq:rate_reg} (with $\Theta_{n,1}=\Theta$ and
$\cH_{n,1} = \cH_0$) hold for some $(\epsilon_n)$ under the two
conditions: $\Pi_\cW \{\|W - w_0\|_\infty < \epsilon \} > 0$ and
$N(\epsilon, \cW, \|\cdot\|_\infty) < \infty$ for every
$\epsilon > 0$, where $\|\cdot\|_\infty$ is the uniform norm and
$\Pi_\cW$ is the law of $W$.
Since $\cW$ is totally bounded with respect to $\|\cdot\|_\infty$
by the Arzel{\`a}-Ascoli theorem, the condition 
$N(\epsilon, \cW, \|\cdot\|_\infty) < \infty$ is satisfied.
For given $\epsilon > 0$, there exists an integer $J$ such that
$M \cdot \sum_{j=J+1}^\infty j^{-\alpha} < \epsilon/4$.
Since each random variable $C_j$ has a positive and continuous
density at $c_{0,j}$ for $j \leq J$, we have $\Pi_\cW(A) > 0$ for
$A=\{\max_{j \leq J} |C_j - c_{0,j}| < 
\epsilon / (2\sum_{j=1}^\infty j^{-\alpha})\}$. Since
$\|W-w_0\|_\infty < \epsilon$ on $A$, we have
$\Pi_\cW \{\|W - w_0\|_\infty < \epsilon \} > 0$.

Note that \eqref{eq:Lipsc_cond_reg} is trivially satisfied with a
constant function $Q$. In Section~\ref{subsubsec:8}, we prove the asymptotic tightness of
\eqref{eq:score_process_reg}, which completes the proof.
\end{proof}

\section{Efficiency in the linear mixed effect model}
\label{sec:lm}

In this section, we consider the linear mixed effect model,
\[
  X_{ij} = \theta^T Z_{ij} + b_i^T W_{ij} + \epsilon_{ij},
  \quad\text{for $i=1,\ldots,n$ and $j=1, \ldots, m_i$},
\]
where the covariates $Z_{ij}\in\RR^p$ and $W_{ij}\in\RR^q$ are
non-random, the error $\epsilon_{ij}$'s form an \iid\ sequence drawn
from a distribution with density $f$ and the random effect
coefficients $b_i$ are \iid\ from a distribution $G$. 
The nuisance
parameter $\eta=(f,G)$ takes its values in the space
$\cH = \cF \times \cG$, where the first factor $\cF$ denotes the class
of continuously differentiable densities supported on $\DD=(-r,r)$
for some $r \in (0,\infty]$ with
$f(x)>0$ and $f(x)=f(-x)$ for all $x\in \DD$ and $\cG$ is the class of 
symmetric distributions supported on $[-M_b,M_b]^q$ for some $M_b > 0$.
The true value of the
nuisance is denoted by $\eta_0 = (f_0, G_0)$. We write
$X_i = (X_{i1}, \ldots, X_{im_i})^T$, and similarly,
$Z_i\in\RR^{p\times m_i}$ and $W_i\in\RR^{q\times m_i}$.
As in the linear regression model, we assume that,
\be \label{eq:lm_covariate}
  |Z_{ij}| \leq L\quad\text{and}\quad|W_{ij}| \leq L,
    \quad\text{for all $i$ and $j$}.
\ee
Define,
\[
  p_{\theta,\eta,i}(x) = \int \prod_{j=1}^{m_i} f(x_j -
  \theta^T Z_{ij} - b_i^T W_{ij}) dG(b_i),
\]
where $x=(x_1, \ldots, x_{m_i})^T \in\RR^{m_i}$. Quantities denoted
by $p_{\theta,\eta}^{(n)},
\ell_{\theta,\eta,i}, \score_{\theta,\eta,i}$ and $\score^{(n)}_{\theta,\eta}$
are defined and used in the same way as in Section~\ref{sec:regression}.
The design matrix $\bZ_n$ is defined by
$\bZ_n = n^{-1} \sum_{i=1}^n Z_i Z_i^T$. For technical reasons and
notational convenience, we assume that there exists an integer $m$
such that $m_i=m$ for all $i$, but proofs below can be extended to
general cases without much hamper. 

For $y=(y_1, \ldots, y_m)^T\in\RR^m$ and
$w=(w_1, \ldots, w_m) \in [-L,L]^{q\times m}$, define,
\[
  \psi_\eta(y|w) = \int \prod_{j=1}^{m} f(y_j - b^T w_j)\, dG(b),
\]
and $\ell_\eta(y|w) = \log\psi_\eta(y|w)$.
Let $s_\eta(y|w) = - \partial \ell_\eta(y|w) / \partial y \in \RR^m$.
Then it can be easily shown that 
$\score_{\theta,\eta,i}(x) = Z_i s_\eta\left(x-Z_i^T\theta| W_i\right) \in \RR^p$.
Furthermore, let
$\Psi^w_{\eta}(\cdot)$ denote the probability measure on $\RR^m$ with
density $y\mapsto\psi_\eta(y|w)$. 
The metric $h_n$ on $\Theta\times\cH$
is defined as in \eqref{eq:h_n-def}. With slight abuse of notation, we
also use $h_n$ as a metric on $\cH$ defined as
$h_n(\eta_1,\eta_2) = h_n((\theta_0,\eta_1),(\theta_0,\eta_2))$.
Let,
\[
	d^2_w(\eta_1,\eta_2) =
  \int |s_{\eta_1}(y|w) - s_{\eta_2}(y|w)|^2 d\Psi_{\eta_0}^w(y).
\]
Define $B_n(\epsilon)$ and $V_{n,\eta}$ as in \eqref{eq:B_n-def}
and \eqref{eq:V_n_eta-def}, respectively. 
It can be easily shown that,
\be\label{eq:V_n_def_lm}
  V_{n,\eta} 
  = \frac{1}{n}\sum_{i=1}^n Z_i v_{\eta}(W_i) Z_i^T,
\ee
where $v_\eta(w)$ is the $m\times m$ matrix defined as,
\[
  v_{\eta}(w) = \int s_{\eta}(y|w)\, s_{\eta_0}(y|w)^T
  \,d\Psi^w_{\eta_0}(y).
\]

To prove the BvM assertion in the linear mixed effect model, we need
a condition to ensure that $\sup_{i\geq 1} h(\psi_{\eta_n}(\cdot|W_i),
\psi_{\eta_0}(\cdot|W_i)) \rightarrow $ as $h_n(\eta_n,\eta_0)\rightarrow 0$.
For this purpose, we define $N_{n,\epsilon}(u)$ to be the number of
$W_{ij}$'s with $|W_{ij} - u| < \epsilon$, and assume that,
for every (fixed) $\epsilon > 0$ and $u \in \RR^q$,
\begin{equation} \label{eq:design}
	N_{n,\epsilon}(u)=0 \;\; \textrm{for all $n$,} \quad\text{or}\quad \liminf_n n^{-1} N_{n,\epsilon}(u) > 0.
\end{equation}
Condition \eqref{eq:design}
is easily satisfied, for example when $W_{ij}$'s are \iid\ realization
from any distribution.

\begin{theorem}
\label{th:main-lm}
Suppose that
$\liminf_n \rho_{\rm min}(\bZ_n)>0$,
$\rho_{\rm min}(v_{\eta_{0}}(w)) > 0$ for every $w$, $G_0$ is thick at 0,
$\Pi_\Theta$ is thick at $\theta_0$, and $w\mapsto v_{\eta_0}(w)$ is continuous.
Also suppose that there exist a large integer $N$, a sequence $(\epsilon_n)$,
with $\epsilon_n \downarrow 0$ and $n\epsilon_n^2\rightarrow\infty$, 
and sequences of partitions
$\Theta=\Theta_{n,1} \cup \Theta_{n,2}$, $\cH=\cH_{n,1}\cup \cH_{n,2}$
such that $\eta_0\in \cH_{n,1}$ and \eqref{eq:rate_reg} holds for all
$n \geq N$. For some $\overline M_n\uparrow\infty$, with
$\epsilon_n \overline M_n \rightarrow 0$, let
$\cH_n = \{\eta\in\cH_{n,1}: h_n(\eta,\eta_0) < \overline M_n\epsilon_n\}$.
Assume that there exists a continuous function $Q$ such that
$\sup_w \int Q^3(x,w) \psi_{\eta_0}(x|w) d\mu(x) < \infty$, and,
\begin{equation}
  \label{eq:Lipsc_cond_lm}
  \sup_{\eta\in\cH^N}\frac{|\ell_{\eta}(x+y|w)- \ell_{\eta}(x|w)|}{|y|}
  \vee \frac{|s_{\eta}(x+y|w)- s_{\eta}(x|w)|}{|y|}  \leq  Q(x,w),
\end{equation}
for all $x, w$ and small enough $|y|$, where $\cH^N  = \cup_{n=N}^\infty \cH_n$.
Also assume that the class of $\RR^2$-valued functions,
\bea
  &&\bigg\{w \mapsto \Big(\,d_w(\eta_1,\eta_2),\,
  h(\psi_{\eta_1}(\cdot|w), \psi_{\eta_2}(\cdot|w))\, \Big):
   \eta_1, \eta_2\in\cH^N \bigg\}, \label{eq:w_class}
\eea
is equicontinuous, and for sufficiently small
$\epsilon_0 >0$ the stochastic process,
\begin{equation}
\label{eq:score_process_lm}
  \bigg\{\frac{1}{\sqrt{n}} \Big(\score_{\theta,\eta}^{(n)}
    - P_0^{(n)} \score_{\theta,\eta}^{(n)}\Big):
    |\theta-\theta_0| <\epsilon_0, \eta\in \cH^N \bigg\},
\end{equation}
is asymptotically tight.
Then, the BvM assertion
\eqref{eq:bvmassertion} holds for the linear mixed effect model,
provided that \eqref{eq:lm_covariate} and \eqref{eq:design} hold.
\end{theorem}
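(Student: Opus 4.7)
The plan is to verify the three sets of hypotheses of Theorem~\ref{thm:BvM_general}, namely the misspecified LAN expansion \eqref{eq:mLAN} and Conditions~A and~B, and then invoke that theorem. The proof closely parallels the one for Theorem~\ref{th:main-linear}, but all computations have to be carried out for the mixed effect likelihood $\psi_{\eta}(\cdot|w)$ rather than the pointwise density $\eta$; the uniformity over the design values $w=W_i$ is what makes this nontrivial. Throughout we exploit symmetry of $f$ and $G$ to conclude $P_0^{(n)}\score_{\theta_0,\eta}^{(n)}=0$ for every $\eta\in\cH$, which is the analogue of the no-bias identity used in Section~\ref{sec:regression}.

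For the LAN expansion, decompose the left-hand side of \eqref{eq:mLAN} as $A_n(h,\eta)+B_n(h,\eta)$ exactly as in \eqref{eq:AB_n}. Using asymptotic tightness of \eqref{eq:score_process_lm}, the pointwise convergence of $n^{-1/2}(\score^{(n)}_{\theta_n(h),\eta}-\score^{(n)}_{\theta_0,\eta})^o$ to zero (whose variance is bounded by the third moment condition on $Q$ and \eqref{eq:Lipsc_cond_lm}) upgrades to uniform convergence over $h\in K$ and $\eta\in\cH^N$, giving $\sup |A_n|=o_{P_0}(1)$. For $B_n$, the Lipschitz bound \eqref{eq:Lipsc_cond_lm} and the third-moment assumption on $Q$ yield a uniform quadratic expansion of the mean log-likelihood analogous to \eqref{eq:unif_quad_reg}. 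Condition A is addressed next: from the representation \eqref{eq:V_n_def_lm} one gets
\[
\|V_{n,\eta}-V_{n,\eta_0}\|\leq \rho_{\max}(\bZ_n)\sup_{i}\|v_\eta(W_i)-v_{\eta_0}(W_i)\|\lesssim \sup_i d_{W_i}(\eta,\eta_0),
\]
so equicontinuity of the class \eqref{eq:w_class} together with consistency (Lemma below) of $h_n(\eta,\eta_0)\to 0$ forces $\sup_{\eta\in\cH_n}\sup_i d_{W_i}(\eta,\eta_0)\to 0$, giving \eqref{eq:V_conti_condition}. Assumption $\rho_{\min}(v_{\eta_0}(w))>0$, its continuity in $w$ and $\liminf\rho_{\min}(\bZ_n)>0$ give \eqref{eq:V_positive}. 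Finally, \eqref{eq:score_conti_condition} follows from asymptotic tightness of \eqref{eq:score_process_lm} and the same $d_w$-control: as in the regression case one verifies the Lindeberg condition for marginals, obtains weak convergence to a tight Gaussian process, and deduces asymptotic equicontinuity along $d_2$-neighborhoods.

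For Condition B, the key ingredient is an analogue of Lemma~\ref{lem:reg_consistency}: if $\eta\in\cH_n$ and $h_n((\theta,\eta),(\theta_0,\eta_0))<\epsilon$ then $|\theta-\theta_0|<K\epsilon$ and $h_n(\eta,\eta_0)<K\epsilon$. The nontrivial direction (bounding $|\theta-\theta_0|$) uses a local quadratic lower bound on $n^{-1}\sum_i h^2(p_{\theta,\eta,i},p_{\theta_0,\eta,i})$ in terms of $(\theta-\theta_0)^T V_{n,\eta}(\theta-\theta_0)$, which is in turn bounded below by a constant multiple of $|\theta-\theta_0|^2$ thanks to $\rho_{\min}(v_{\eta_0}(w))>0$, continuity in $w$, the design assumption \eqref{eq:design} (ensuring enough $W_{ij}$'s accumulate near each relevant level) and $\liminf\rho_{\min}(\bZ_n)>0$; the thickness of $G_0$ at $0$ guarantees non-degeneracy of the mixed likelihood in the $\theta$-direction. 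Given this lemma, the standard Ghosal--van der Vaart posterior rate theorem applied with \eqref{eq:rate_reg} yields $h_n$-consistency at rate $\epsilon_n$, which delivers \eqref{eq:eta_consistency}. The $\sqrt{n}$-rate \eqref{eq:sqrtn_general} then follows by the same Le~Cam-style partitioning argument used after \eqref{eq:ghoshal}: decompose $\Pi(\sqrt n|\theta-\theta_0|>M_n|X^{(n)})$ by conditioning on $\eta$, establish lower bounds on the denominator and exponential upper bounds on the numerator, uniform over $\eta\in\cH_n$, using the uniform LAN expansion \eqref{eq:unif_LAN} already available.

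The main obstacle, and the step that requires the most care, is the analogue of Lemma~\ref{lem:reg_consistency}. In the regression model the decoupling between $\theta$ and $\eta$ in the pointwise Hellinger distance is immediate, but for the mixed effect model the within-group densities $\psi_{\eta}(\cdot|W_i)$ depend on $\theta$ through an $m$-dimensional shift that is coupled to $G$. Separating the $\theta$-contribution from the $\eta$-contribution in $h_n$ requires combining the equicontinuity condition on \eqref{eq:w_class}, the quantitative non-singularity of $v_{\eta_0}(w)$, the design condition \eqref{eq:design}, and the assumption that $G_0$ is thick at $0$; a Taylor expansion of $y\mapsto\psi_\eta(y+(\theta-\theta_0)^TZ_i|W_i)$ around $\theta_0$ controlled by $Q$ in \eqref{eq:Lipsc_cond_lm} converts the local Hellinger geometry into a quadratic form in $\theta-\theta_0$ with coefficient bounded below by a positive multiple of $\rho_{\min}(V_{n,\eta_0})$. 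Once this local identifiability is in place, the remainder of the argument is a direct translation of Section~\ref{sec:regression}, and all detailed computations may be relegated to the appendix.
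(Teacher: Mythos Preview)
Your overall architecture is right and matches the paper: verify \eqref{eq:mLAN}, Conditions~A and~B, then invoke Theorem~\ref{thm:BvM_general}. The handling of $A_n,B_n$ for the misspecified LAN, of \eqref{eq:V_conti_condition} and \eqref{eq:V_positive} via $d_{W_i}$-control, and of Condition~B via Ghosal--van~der~Vaart plus a Le~Cam partitioning is essentially identical to what the paper does. Two points, however, differ from the paper in ways that matter.

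First, for \eqref{eq:score_conti_condition} you propose to repeat the regression argument: Lindeberg--Feller for marginals, weak convergence to a tight Gaussian process, then asymptotic $d_2$-equicontinuity. In the regression model this worked because the $L_2(P_0^{(n)})$-covariance of the centred score process equalled $d_2^2(\eta,\eta')$, independent of $n$. In the mixed model the analogous covariance is $n^{-1}\sum_i a^T Z_i\big[\int(s_\eta-s_{\eta'})(s_\eta-s_{\eta'})^T\,d\Psi_{\eta_0}^{W_i}\big]Z_i^T a$, which depends on the design $\{W_i\}$ and hence on $n$; there is no single intrinsic semimetric and no limiting Gaussian process to appeal to without extra assumptions on the empirical distribution of the $W_i$. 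The paper sidesteps this entirely: it uses asymptotic tightness of \eqref{eq:score_process_lm} directly (via the finite-partition characterisation, Theorem~1.5.6 of \cite{van1996weak}) to reduce \eqref{eq:score_conti_condition} to pointwise statements along finitely many sequences $\eta_n^{(j)}\in\cH_n$, each of which is handled by the variance bound coming from \eqref{eq:d_w_consistency}. Your route needs repair here; the paper's partition argument is the clean fix.

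Second, your version of Lemma~\ref{lem:lm_consistency1} lower-bounds $n^{-1}\sum_i h^2(p_{\theta,\eta,i},p_{\theta_0,\eta,i})$ by the quadratic form $(\theta-\theta_0)^T V_{n,\eta}(\theta-\theta_0)$ and then invokes $\rho_{\min}(v_{\eta_0}(w))>0$, continuity in $w$, and \eqref{eq:design}. The paper instead works with the \emph{marginal} densities $\psi_{\eta_0,j}(x_j|w_j)=\int f_0(x_j-b^Tw_j)\,dG_0(b)$: thickness of $G_0$ at $0$ and positivity of $f_0(0)$ give $\inf_{|x_j|\le\gamma}\inf_{w_j}\psi_{\eta_0,j}(x_j|w_j)>0$, whence, exactly as in \eqref{eq:yd3}, a total-variation/half-line argument yields $h^2(p_{\theta,\eta,i},p_{\theta_0,\eta_0,i})\ge C^2(\delta\wedge|(\theta-\theta_0)^T Z_i|)^2$ directly. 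This bounds the Hellinger distance with \emph{mismatched} nuisance $\eta\neq\eta_0$ from below in terms of $|\theta-\theta_0|$ alone, avoiding the circularity your quadratic-form route faces (you need to relate $h_n((\theta,\eta),(\theta_0,\eta))$ back to $h_n((\theta,\eta),(\theta_0,\eta_0))$, which reintroduces $h_n(\eta,\eta_0)$). Note also that neither $\rho_{\min}(v_{\eta_0}(w))>0$ nor \eqref{eq:design} is used in the paper's proof of this lemma; \eqref{eq:design} enters only in establishing \eqref{eq:d_w_consistency} for Condition~A.
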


The proof of Theorem \ref{th:main-lm} is quite similar to that of
Theorem \ref{th:main-linear} except for some technical details.
Below we follow the same line to the proof of Theorem \ref{th:main-linear}.

\subsection{Proof of the misspecified LAN property}

Let \eqref{eq:AB_n} define $A_n(h,\eta)$ and $B_n(h,\eta)$ again and let $K$
be a compact subset of $\RR^p$. Then it suffices to prove that
$A_n(h,\eta)$ and $B_n(h,\eta)$ converge in $P_0^{(n)}$-probability to
zero uniformly over $h\in K$ and $\eta\in\cH^N$.
Note that the inequality \eqref{eq:tight_process} still holds. Since,
\[
  \begin{split}
  {\rm Var} \bigg(\frac{1}{\sqrt{n}}
  &\Big(\score_{\theta_n(h),\eta}^{(n)}
      -\score_{\theta_0,\eta}^{(n)}\Big)^o \bigg)
    = \frac{1}{n}\sum_{i=1}^n P_0 | \score_{\theta_n(h),\eta,i}
      - \score_{\theta_0,\eta,i}|^2\\
  &= \frac{1}{n}\sum_{i=1}^n P_0 \Big| Z_i
    \Big(s_\eta(X_i - Z_i^T \theta_n(h)| W_i)
      - s_\eta(X_i - Z_i^T \theta_0| W_i) \Big) \Big|^2\\
  &\leq \frac{1}{n}\sum_{i=1}^n \|Z_i\|^4 \cdot
    |\theta_n(h) - \theta_0|^2 \cdot P_0 Q(X_i, W_i)^2 = o(1),
  \end{split}
\]
$\sup_{h\in K} \sup_{\eta\in\cH^N} |A_n(h,\eta)| = o_{P_0}(1)$ by
asymptotic tightness of \eqref{eq:score_process_lm}.

For $B_n(h,\eta)$, we prove in Section \ref{subsubsec:4-1} that,
\begin{equation}\label{eq:unif_quad_lm}
  \sup_{\eta\in\cH^N} \left| \frac{1}{n}P_0^{(n)}
  \Big( \ell^{(n)}_{\theta,\eta} - \ell^{(n)}_{\theta_0,\eta}\Big)
  + \frac{1}{2} (\theta-\theta_0)^T V_{n,\eta} (\theta-\theta_0) \right|
  = o(|\theta-\theta_0|^2),
\end{equation}
as $\theta \rightarrow \theta_0$. Consequently, the supremum of
$B_n(h,\eta)$ over $h \in K$ and $\eta\in \cH^N$ converges to 0.
\qed

\subsection{Proof of Condition A}

It is shown in Section \ref{subsubsec:4-2} that,
\be \label{eq:d_w_consistency}
  \lim_{n\rightarrow\infty} \sup_{i \geq 1} \sup_{\eta\in\cH_n}
  d_{W_i}(\eta,\eta_0) = 0.
\ee
Note that for any $a \in \RR^m$ with $|a|=1$,
\be
  \label{eq:v_eta_w-consistency}
  \begin{split}
  a^T&\Big(v_\eta(w) - v_{\eta_0}(w)\Big)a\\
  &= \int a^T \Big(s_\eta(x|w) - s_{\eta_0}(x|w)\Big)
    s_{\eta_0}(x|w)^T a\; d\Psi^w_{\eta_0}(x)\\
  &\leq C \int \Big| s_\eta(x|w) - s_{\eta_0}(x|w)\Big|^2 d\Psi^w_{\eta_0}(x)
    = C d_w^2(\eta, \eta_0),
\end{split}
\ee
for some constant $C > 0$ by the Cauchy-Schwartz inequality and
\eqref{eq:Lipsc_cond_lm}. Thus, 
\[
  \lim_{n\rightarrow\infty} \sup_{i\geq 1} \sup_{\eta\in\cH_n}
  \|v_\eta(W_i) - v_{\eta_0}(W_i)\| = 0.
\]
Since,
\bean
  \sup_{\eta\in\cH_n} \|V_{n,\eta} - V_{n,\eta_0}\| 
    = \sup_{\eta\in\cH_n} \bigg\| \frac{1}{n} \sum_{i=1}^n Z_i 
    \{v_\eta(W_i) - v_{\eta_0}(W_i)\} Z_i^T \bigg\|= o(1),
\eean
which completes the proof of \eqref{eq:V_conti_condition}.

Let $a \in \RR^p$ be a fixed non-zero vector.
Then for any sequence $\eta_n \in \cH_n$,
\[
  {\rm Var} \bigg(\frac{a^T}{\sqrt{n}} (\score^{(n)}_{\theta_0,\eta_n}
    - \score^{(n)}_{\theta_0,\eta_0})\bigg)
  = \frac{1}{n} \sum_{i=1}^n a^T Z_i u_{\eta_n}(W_i) Z_i^T a,
\]
where,
\[
  u_\eta(w) = \int \Big(s_\eta(x|w) - s_{\eta_0}(x|w)\Big)
  \Big(s_\eta(x|w) - s_{\eta_0}(x|w) \Big)^T d\Psi_{\eta_0}^{w}(x).
\]
Since $|b^T u_\eta(w) b| \leq d_w(\eta,\eta_0)$ for every
$\eta\in\cH^N$ and $b\in\RR^m$ with $|b| = 1$, 
we have $\sup_{i\geq 1} \|u_{\eta_n}(W_i)\| = o(1)$
by \eqref{eq:d_w_consistency}, and so,
\[
  \frac{a^T}{\sqrt{n}} (\score^{(n)}_{\theta_0,\eta_n}
    - \score^{(n)}_{\theta_0,\eta_0}) = o_{P_0}(1).
\]
For given $\epsilon, \delta > 0$, by asymptotic tightness of
\eqref{eq:score_process_lm} and Theorem~1.5.6 of \cite{van1996weak},
there is a partition $\cH^N = \cup_{j=1}^J \cH^{(j)}$ such that,
\[
  P_0\bigg(\max_{1\leq j\leq J}\sup_{\eta_1,\eta_2\in\cH^{(j)}} 
    \bigg|\frac{a^T}{\sqrt{n}} (\score^{(n)}_{\theta_0,\eta_1} 
      - \score^{(n)}_{\theta_0,\eta_2}) \bigg| > \epsilon \bigg) < \delta,
\]
for large enough $n$.
We can choose sequences $(\eta_n^{(j)})$ for $j=1, \ldots, J$ such
that $\eta_n^{(j)} \in \cH_n$ and for every $n\geq N$ and for a given
$\eta\in\cH_n$ there exists at least one $j$ such that $\eta$ and
$\eta_n^{(j)}$ are contained in the same partition. Since,
\[
  \max_{1\leq j \leq J}\bigg|\frac{a^T}{\sqrt{n}}
  (\score^{(n)}_{\theta_0,\eta_n^{(j)}} 
  - \score^{(n)}_{\theta_0,\eta_0})\bigg| = o_{P_0}(1),
\]
we have,
\[
  P_0\bigg(\sup_{\eta\in\cH_n} \bigg|\frac{a^T}{\sqrt{n}}
    (\score^{(n)}_{\theta_0,\eta} - \score^{(n)}_{\theta_0,\eta_0})
    \bigg| > 2\epsilon \bigg) < 2\delta,
\]
for large enough $n$. Since $a$ is an arbitrary vector,
\eqref{eq:score_conti_condition} is proved.

Since $\rho_{\rm min} (v_{\eta_{0}}(w)) > 0$ for every $w$ and the
map $w\mapsto v_{\eta_0}(w)$ is continuous, we have that
$\inf_{w\in [-L,L]^{q\times m}} \rho_{\rm min} (v_{\eta_{0}}(w)) > 0$.
In addition, since each component of the matrix $v_{\eta_0}(w)$ is
bounded uniformly in $w$ by the integrability condition 
\eqref{eq:Lipsc_cond_lm}, we have $\sup_{w\in [-L,L]^{q\times m}}
\rho_{\rm max} (v_{\eta_{0}}(w)) < \infty$. Finally, since,
\[
  0 < \liminf_n \rho_{\min}(\bZ_n) \leq \limsup_n \rho_{\max}(\bZ_n) < \infty,
\]
\eqref{eq:V_positive} is satisfied by \eqref{eq:V_n_def_lm}.

\subsection{Proof of Condition B}

We shall have need for the following lemma, the proof of which is in
Section~\ref{subsubsec:4-3}.

\begin{lemma}
\label{lem:lm_consistency1} Under the conditions in Theorem
\ref{th:main-lm}, there exists $K>0$ such that for every
sufficiently small $\epsilon > 0$ and $\eta\in\cH^N$,
$h_n\big((\theta,\eta),(\theta_0,\eta_0)\big) < \epsilon$ implies 
$|\theta-\theta_0| < K\epsilon$ and $h_n(\eta,\eta_0) < K\epsilon$. 
\end{lemma}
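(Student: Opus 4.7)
The plan is to derive a local additive lower bound of the form
\begin{equation}\label{eq:plan-additive}
h_n^2\bigl((\theta,\eta),(\theta_0,\eta_0)\bigr) \;\geq\; c_1\,|\theta-\theta_0|^2 + c_2\, h_n^2(\eta,\eta_0) - R_n,
\end{equation}
with constants $c_1,c_2>0$ and remainder $R_n = o(|\theta-\theta_0|^2 + h_n^2(\eta,\eta_0))$, valid for $\theta$ near $\theta_0$ and $\eta\in\cH^N$. Once \eqref{eq:plan-additive} is in place, the remainder can be absorbed for sufficiently small $\epsilon$, and the hypothesis $h_n((\theta,\eta),(\theta_0,\eta_0))<\epsilon$ immediately yields both $|\theta-\theta_0|<K\epsilon$ and $h_n(\eta,\eta_0)<K\epsilon$.

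To derive \eqref{eq:plan-additive}, I use the Hellinger-to-log expansion
\[
h^2(p_{\theta,\eta,i},p_{\theta_0,\eta_0,i}) = \tfrac{1}{4}\int(\ell_{\theta,\eta,i}-\ell_{\theta_0,\eta_0,i})^2\,dP_{\theta_0,\eta_0,i} + r_i,
\]
whose third-order remainder $r_i$ is controlled by $\sup_w\int Q^3\,\psi_{\eta_0}(\cdot|w)\,d\mu<\infty$. After the substitution $y = x - Z_i^T\theta_0$, the log-difference splits, by \eqref{eq:Lipsc_cond_lm}, as
\[
-(\theta-\theta_0)^T Z_i\, s_\eta(y|W_i) + \bigl[\ell_\eta(y|W_i)-\ell_{\eta_0}(y|W_i)\bigr] + R_i(y),
\]
with $\int R_i^2\,d\Psi_{\eta_0}^{W_i} = o(|\theta-\theta_0|^2)$. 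Squaring, integrating against $\psi_{\eta_0}(\cdot|W_i)$ and averaging over $i$ produces three contributions: a $\theta$-quadratic term, an $\eta$-quadratic term, and a cross term.

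The decisive step is that the cross term \emph{vanishes} by the symmetry of the model. Since $f$ and $G$ are symmetric, the change of variables $b\mapsto-b$ and evenness of $f$ show that $\psi_\eta(\cdot|w)$ is even in its first argument; hence $\ell_\eta(y|w)-\ell_{\eta_0}(y|w)$ and $\psi_{\eta_0}(y|w)$ are both even in $y$, while $s_\eta(y|w)=-\partial_y\ell_\eta(y|w)$ is odd. Thus the cross-term integrand is odd in $y$ and integrates to $0$. The surviving diagonal terms are $(\theta-\theta_0)^T Z_i v_\eta(W_i) Z_i^T(\theta-\theta_0)$ and $\int(\ell_\eta-\ell_{\eta_0})^2\, d\Psi_{\eta_0}^{W_i}$. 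By \eqref{eq:d_w_consistency} and \eqref{eq:v_eta_w-consistency} the former is uniformly comparable to $(\theta-\theta_0)^T Z_i v_{\eta_0}(W_i) Z_i^T(\theta-\theta_0)$ on $\cH^N$, and the latter equals, to leading order, $4 h^2(\psi_\eta(\cdot|W_i),\psi_{\eta_0}(\cdot|W_i))$. Averaging over $i$ and using $\liminf_n \rho_{\min}(V_{n,\eta_0})>0$ yields \eqref{eq:plan-additive} with the $\theta$-coefficient bounded below by a multiple of $\rho_{\min}(V_{n,\eta_0})$.

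The main obstacle is ensuring that the third-order Hellinger remainder, the replacement $v_\eta\mapsto v_{\eta_0}$, and the identification of the $\eta$-quadratic integral with $h^2(\psi_\eta(\cdot|W_i),\psi_{\eta_0}(\cdot|W_i))$ are all \emph{uniformly} negligible over $\eta\in\cH^N$ and over the design indices $W_i$. This is precisely what the $L^3$ integrability of $Q$, the Lipschitz bound \eqref{eq:Lipsc_cond_lm} and the equicontinuity of the class \eqref{eq:w_class} are designed to supply; with them in hand, the additive lower bound \eqref{eq:plan-additive} holds as stated and the lemma follows.
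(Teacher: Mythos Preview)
Your approach has a genuine gap in the control of the Hellinger-to-log expansion. The approximation
\[
h^2(p_{\theta,\eta,i},p_{\theta_0,\eta_0,i}) \approx \tfrac{1}{4}\int(\ell_{\theta,\eta,i}-\ell_{\theta_0,\eta_0,i})^2\,dP_{\theta_0,\eta_0,i}
\]
is a \emph{local} expansion whose remainder is of order $\int|\ell_{\theta,\eta,i}-\ell_{\theta_0,\eta_0,i}|^3\,dP_{\theta_0,\eta_0,i}$. The condition $\sup_w\int Q^3\,d\Psi_{\eta_0}^w<\infty$ controls only the part of this cube coming from the $\theta$-increment, since \eqref{eq:Lipsc_cond_lm} bounds the Lipschitz constant of $x\mapsto\ell_\eta(x|w)$ in $x$, not the difference $\ell_\eta-\ell_{\eta_0}$ across $\eta$'s. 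There is no hypothesis in Theorem~\ref{th:main-lm} that bounds $\int|\ell_\eta(\cdot|W_i)-\ell_{\eta_0}(\cdot|W_i)|^3\,d\Psi_{\eta_0}^{W_i}$ by $o\bigl(\int|\ell_\eta-\ell_{\eta_0}|^2\,d\Psi_{\eta_0}^{W_i}\bigr)$ uniformly over $\eta\in\cH^N$; indeed, small Hellinger distance between $\psi_\eta$ and $\psi_{\eta_0}$ does not in general force the log-ratio to be pointwise small. The same issue undermines your identification of $\int(\ell_\eta-\ell_{\eta_0})^2\,d\Psi_{\eta_0}^{W_i}$ with $4h^2(\psi_\eta(\cdot|W_i),\psi_{\eta_0}(\cdot|W_i))$: this equivalence is itself only true to leading order when $\ell_\eta-\ell_{\eta_0}$ is already known to be small, which is circular.

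The paper avoids this circularity by arguing in two decoupled stages. First it bounds $|\theta-\theta_0|$ directly, using only the structure of $\eta_0$: from $h^2\geq d_V^2/4$ and the choice of a coordinate half-space, one gets
\[
h^2(p_{\theta,\eta,i},p_{\theta_0,\eta_0,i})\;\geq\; C^2\bigl(\delta\wedge|(\theta-\theta_0)^T Z_i|\bigr)^2,
\]
a bound that holds for \emph{every} $\eta$, with no need for $\eta$ to be close to $\eta_0$. Combining this with the design condition $\liminf_n\rho_{\min}(\bZ_n)>0$ yields $|\theta-\theta_0|\lesssim\epsilon$. Only then, with $|\theta-\theta_0|$ already controlled, does the paper bound $h_n(\eta,\eta_0)$ by a triangle inequality
\[
h_n(\eta,\eta_0)\leq \sqrt{2}\,h_n\bigl((\theta,\eta),(\theta_0,\eta_0)\bigr)+\sqrt{2}\,\Bigl(\tfrac{1}{n}\sum_i h^2(p_{\theta,\eta_0,i},p_{\theta_0,\eta_0,i})\Bigr)^{1/2},
\]
with the second term $\lesssim|\theta-\theta_0|$ by \eqref{eq:Lipsc_cond_lm} applied at $\eta_0$ only. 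Your symmetry observation (the cross term vanishes) is correct and elegant, but the additive lower bound you aim for cannot be reached from the stated assumptions.
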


Posterior consistency of the parameter $(\theta,\eta)$ with
respect to the metric $h_n$ is guaranteed by Theorem~4 of
\cite{ghosal2007convergence}. Thus, Lemma \ref{lem:lm_consistency1}
implies \eqref{eq:eta_consistency}. The proof of
\eqref{eq:sqrtn_general} for the linear mixed effect model
is very similar to the analogous proof in linear regression model,
as in Section~3.

\subsection{Examples}

Let $\widetilde \cF$ (resp. $\widetilde \cG$) be the set of every $f$
(resp. $G$) whose symmetrization $\bar f$ (resp. $\overline G$)
belongs to $\cF$ (resp. $\cG$), where $\overline G=(G + G^-)/2$ with
$G^-(A) = G(-A)$ for every measurable set $A$. For the prior of $\eta$, we
consider a product
measure $\Pi_\cF\times \Pi_\cG$, where $\Pi_\cF$ and $\Pi_\cG$ are the
symmetrized versions of probability measures $\Pi_{\widetilde\cF}$
and $\Pi_{\widetilde\cG}$ on $\widetilde \cF$  and $\widetilde \cG$,
respectively.
The following lemma plays a role in the proof of
Corollary~\ref{cor:ex-dpm-lm} (its proof is given in Section
\ref{subsubsec:4-4}).
Denote the L\'{e}vy-Prohorov metric between
two probability measures $P_1$, $P_2$ is denoted by $d_W(P_1,P_2)$.

\begin{lemma} \label{lem:lm-consistency2}
Let $\cH_0 = \cF_0\times\cG_0 \subset \cH$ for some $\cF_0 \subset
\cF$ and $\cG_0 \subset \cG$ with $f_0\in\cF_0$ and $G_0\in\cG_0$.
Assume that there exist a continuous function $Q_0$ and small enough
$\delta_0 > 0$ such that,
\be\label{eq:Q_0-integrability}
  \int \sup_w \sup_{\eta\in\cH_0} Q_0(x,w)^2 \psi_{\eta}(x|w) d\mu(x)
  < \infty,
\ee
and,
\begin{equation} \label{eq:Q_0-lip}
  \sup_{\eta\in\cH_0}\frac{|\ell_{\eta}(x+y|w)- \ell_{\eta}(x|w)|}{|y|}
  \vee \bigg| \frac{\psi_{\eta_0}(x|w)}{\psi_\eta(x|w)} \bigg|^{\delta_0}
  \leq  Q_0(x,w),
\end{equation}
for all $x, w$ and small enough $|y|$.
Also assume that $\cF_0$ is uniformly tight and,
\be\label{eq:cF_0-bound}
  \sup_{f\in\cF_0}\sup_x f(x) \vee |\dot f(x)|< \infty,
\ee
where $\dot f$ is the derivative of $f$.
Then, on $\Theta\times\cH_0$,
\be\label{eq:h_n-consistency-lm}
  \sup_{n\geq 1} h_n\big((\theta_1, \eta_1), (\theta_2, \eta_2)\big)
    \rightarrow 0,
\ee
as $|\theta_1-\theta_2| \vee h(f_1, f_2) \vee d_W(G_1, G_2) \rightarrow 0$, and,
\be\label{eq:KL-consistency-lm}
  \sup_{n \geq 1}\frac{1}{n}\sum_{i=1}^n 
    K(p_{\theta_0,\eta_0,i}, p_{\theta,\eta,i}) 
    \vee V(p_{\theta_0,\eta_0,i}, p_{\theta,\eta,i}) \rightarrow 0,
\ee
as $|\theta-\theta_0| \vee h(f, f_0) \vee d_W(G, G_0) \rightarrow 0$.
\end{lemma}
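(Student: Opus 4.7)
The plan is to prove both assertions by triangle-inequality decompositions that separate the effects of $\theta$, $f$, and $G$, and then to deduce \eqref{eq:KL-consistency-lm} from \eqref{eq:h_n-consistency-lm} via a standard Hellinger-to-KL conversion using the moment bounds \eqref{eq:Q_0-integrability} and \eqref{eq:Q_0-lip}.

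For \eqref{eq:h_n-consistency-lm}, I use $p_{\theta,\eta,i}(x) = \psi_\eta(x - Z_i^T\theta|W_i)$ and the translation-invariance of $h$ to bound
\[
h(p_{\theta_1,\eta_1,i},\, p_{\theta_2,\eta_2,i})
\le h\bigl(\psi_{\eta_1}(\cdot|W_i),\, \psi_{\eta_1}(\cdot + c_i|W_i)\bigr)
+ h\bigl(\psi_{\eta_1}(\cdot|W_i),\, \psi_{\eta_2}(\cdot|W_i)\bigr),
\]
with $c_i = Z_i^T(\theta_1-\theta_2)$, $|c_i| \le L|\theta_1-\theta_2|$. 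The translation term is controlled via $h^2 \le K$ and \eqref{eq:Q_0-lip}: $K(\psi_\eta(\cdot|w), \psi_\eta(\cdot + c|w)) \le |c| \int \psi_\eta\, Q_0\, d\mu = O(|c|)$ uniformly in $(\eta, w)$ by Cauchy-Schwarz and \eqref{eq:Q_0-integrability}. The nuisance term is further split via an intermediate $\eta' = (f_2, G_1)$: the $f$-piece is bounded by $\sqrt{m}\, h(f_1, f_2)$ using joint convexity of $h^2$ and $h^2(\prod_j p_j,\prod_j q_j) \le \sum_j h^2(p_j, q_j)$, while for the $G$-piece the map $b \mapsto \prod_j f(y_j - b^T w_j)$ is uniformly bounded and equi-Lipschitz in $b$ on $[-M_b, M_b]^q$ (with constants controlled by \eqref{eq:cF_0-bound} and $|W_{ij}|\le L$), so $d_W(G_1, G_2)\to 0$ yields uniform-in-$(y, w, f\in\cF_0)$ pointwise convergence of $\psi_{f, G_1}(y|w)$ to $\psi_{f, G_2}(y|w)$ by the Portmanteau theorem for equicontinuous families; uniform tightness of $\cF_0$ propagates to uniform tightness of $\{\psi_{f, G}(\cdot|w)\}$ through the compactly supported shifts $b^T w_j$, and Scheffé's lemma upgrades uniform pointwise to uniform $L^1$ convergence, hence Hellinger convergence uniformly in $w$.

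For \eqref{eq:KL-consistency-lm}, I change variables $y = x - Z_i^T\theta_0$ and, with $c_i = Z_i^T(\theta_0-\theta)$, write
\[
\log\frac{\psi_{\eta_0}(y|W_i)}{\psi_\eta(y+c_i|W_i)}
= \log\frac{\psi_{\eta_0}(y|W_i)}{\psi_\eta(y|W_i)}
+ \bigl(\ell_\eta(y|W_i) - \ell_\eta(y+c_i|W_i)\bigr).
\]
By \eqref{eq:Q_0-lip}, the shift term is dominated by $|c_i|\, Q_0$, and its first and second moments under $\psi_{\eta_0}(\cdot|W_i)$ are $O(|c_i|)$ and $O(|c_i|^2)$ uniformly in $w$ by \eqref{eq:Q_0-integrability} and Cauchy-Schwarz. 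For the main term, write $\psi_0 = \psi_{\eta_0}(\cdot|w)$ and $\psi = \psi_\eta(\cdot|w)$; I would establish $K(\psi_0, \psi)\vee V(\psi_0, \psi) \lesssim h(\psi_0, \psi)$ uniformly in $w$ by splitting at $\{\psi \ge \psi_0/2\}$: on this set the ratio is bounded, so a Taylor expansion combined with $(\psi_0-\psi)^2/\psi_0 \lesssim (\sqrt{\psi_0}-\sqrt{\psi})^2$ yields $\int \psi_0(\log(\psi_0/\psi))^k\, d\mu \lesssim h^2$ for $k=1,2$ (after also bounding the contribution of $\{\psi > 2\psi_0\}$ via Cauchy-Schwarz against $\int \psi_0(\log(\psi/\psi_0))^{2k}\, d\mu$, which is finite because $(\log x)^{2k}\lesssim x^{1/2}$ and $\int\sqrt{\psi_0\psi}\, d\mu \le 1$); on the complement $\{\psi<\psi_0/2\}$, Chebyshev applied to $h^2=\int(\sqrt{\psi_0}-\sqrt\psi)^2\, d\mu$ gives $\psi_0(\{\psi<\psi_0/2\})\lesssim h^2$, and the integrand $|\log(\psi_0/\psi)|\le\delta_0^{-1}\log Q_0$ lies in $L^2(\psi_0)$ uniformly in $w$ (since $(\log Q_0)^2 \lesssim Q_0$ for large $Q_0$ and $\int \psi_0\, Q_0\, d\mu < \infty$ by \eqref{eq:Q_0-integrability} and Cauchy-Schwarz), so a final Cauchy-Schwarz yields a contribution of order $h$. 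Combining these estimates with the Hellinger consistency just established closes the argument.

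The most delicate step is the $G$-continuity part of the Hellinger argument: pointwise convergence of $\psi_{f, G}(y|w)$ must be uniform in $(y, w, f \in \cF_0)$ simultaneously. This requires the integrands to form an equicontinuous family in $b$ uniformly over $(y, w, f)$ (delivered by \eqref{eq:cF_0-bound} and $|W_{ij}|\le L$), and the resulting densities to be uniformly tight in $y$ (propagated from uniform tightness of $\cF_0$ through the compactly supported shifts), before Scheffé-type uniform $L^1$-convergence may be invoked.
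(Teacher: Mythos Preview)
Your proposal is correct and follows essentially the same route as the paper: the same triangle-inequality decomposition separating $\theta$, $f$, and $G$; the same Lipschitz-in-$b$ argument (via \eqref{eq:cF_0-bound} and the Kantorovich--Rubinstein duality underlying $d_W$) combined with uniform tightness to pass from pointwise to $L^1$/Hellinger convergence for the $G$-piece; and the same reduction of \eqref{eq:KL-consistency-lm} to \eqref{eq:h_n-consistency-lm} via a Hellinger-to-KL conversion using the ratio bound in \eqref{eq:Q_0-lip}. The only noteworthy difference is that where you carry out the splitting argument for $K$ and $V$ by hand, the paper simply invokes Theorem~5 of Wong and Shen (1995), which packages exactly this bound (yielding $K \lesssim h^2\log(1/h)$ and $\int p_0(\log p_0/p)^2 \lesssim h^2(\log 1/h)^2$ under an $L^p$ bound on $(p_0/p)^{\delta_0}$); your direct argument is correct but you may as well cite that result.
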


\subsubsection{Symmetric Dirichlet mixtures of normal distributions}
\label{sssec:ex-dpm-lm}

Let $\Pi_\cF$ denote the prior for the symmetric Dirichlet mixtures of
normal distributions defined in Section \ref{sssec:dpm} and let
$\cF_0$ be the support of $\Pi_\cF$ in Hellinger metric.
Let $\cG_0$ be the support of a prior $\Pi_\cG$ on $\cG$ in the
weak topology, and let $\cH_0 = \cF_0 \times\cG_0$. The following
corollary proves the BvM theorem for $\theta$.

\begin{corollary}\label{cor:ex-dpm-lm}
Assume that $\liminf_n \rho_{\rm min}(\bZ_n)>0$.
With the prior $\Pi_\cH$ described above, the BvM theorem holds
for the linear mixed regression model provided that $\eta_0 \in
\cH_0$, $G_0$ is thick at 0, and $\Pi_\Theta$ is compactly supported
and thick at $\theta_0$, provided \eqref{eq:lm_covariate} and
\eqref{eq:design} hold. 
\end{corollary}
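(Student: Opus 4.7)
The plan is to verify the hypotheses of Theorem~\ref{th:main-lm}. Without loss of generality, $\Theta$ may be taken compact and $\Theta_{n,1}=\Theta$. The hypotheses on $\bZ_n$, $G_0$, $\Pi_\Theta$, \eqref{eq:lm_covariate} and \eqref{eq:design} are supplied by the corollary, so it remains to establish: (i)~positive definiteness and continuity of $w\mapsto v_{\eta_0}(w)$; (ii)~the rate condition \eqref{eq:rate_reg} for some $\epsilon_n$, $\cH_{n,1}$, $\cH_{n,2}$; (iii)~the Lipschitz bound \eqref{eq:Lipsc_cond_lm} together with a cubically integrable envelope $Q$; (iv)~equicontinuity of the class \eqref{eq:w_class}; and (v)~asymptotic tightness of the score process \eqref{eq:score_process_lm}.

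For (i) and (iii), each $f\in\cF_0$ is a normal mixture with bandwidths in $[\sigma_1,\sigma_2]$ supported on $[-M,M]$, so $\log f$ and its first two derivatives have polynomial growth with constants depending only on $(M,\sigma_1,\sigma_2)$ (\emph{cf.}\ Lemma~3.2.3 of \cite{chae2015semiparametric}). Marginalization over the compactly supported $G$ propagates these bounds to $\ell_\eta(\cdot|w)$ and $s_\eta(\cdot|w)$, giving $Q(x,w)=O(1+|x|^2)$ uniformly in $\eta\in\cH_0$ and $w\in[-L,L]^{q\times m}$; the Gaussian-type tails of $\psi_{\eta_0}(\cdot|w)$ then ensure $\sup_w\int Q^3(x,w)\psi_{\eta_0}(x|w)\,d\mu(x)<\infty$. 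Joint continuity of $(y,w)\mapsto(\psi_{\eta_0}(y|w),s_{\eta_0}(y|w))$ together with this domination gives continuity of $v_{\eta_0}(w)=\int s_{\eta_0}(y|w)s_{\eta_0}(y|w)^T\,d\Psi^w_{\eta_0}$ by dominated convergence. Positive definiteness reduces to linear independence of the coordinates of $s_{\eta_0}(\cdot|w)$ in $L_2(\Psi^w_{\eta_0})$, which follows from the strict positivity and smoothness of $f_0$ combined with compact support of $G_0$.

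For (ii) I invoke Lemma~\ref{lem:lm-consistency2} with $\cH_0=\cF_0\times\cG_0$. The envelope $Q$ from (iii) satisfies \eqref{eq:Q_0-integrability}--\eqref{eq:Q_0-lip}, $\cF_0$ is uniformly tight and satisfies the derivative bound \eqref{eq:cF_0-bound} by the bandwidth bound $\sigma\geq\sigma_1$, so the lemma applies. Setting $\widetilde\epsilon_n=n^{-1/2}(\log n)^{3/2}$, the Ghosal--van~der~Vaart construction in the proof of Theorem~6.2 of \cite{ghosal2001entropies} gives $\log N(\widetilde\epsilon_n,\cF_0,h)\lesssim n\widetilde\epsilon_n^2$ and a prior-mass bound $\log\Pi_\cF\{f:K(f_0,f)\vee V(f_0,f)\leq\widetilde\epsilon_n^2\}\gtrsim -n\widetilde\epsilon_n^2$. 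Since $\cG_0$ is supported on the compact $[-M_b,M_b]^q$ its L\'evy-Prohorov covering number is of parametric order, and $G_0$ thick at $0$ yields a parametric-rate lower bound on $\Pi_\cG$-mass of L\'evy-Prohorov neighborhoods of $G_0$. Lemma~\ref{lem:lm-consistency2} translates these into uniform-in-$n$ bounds on $h_n$-neighborhoods and $n^{-1}\sum_i K$, $n^{-1}\sum_i V$, yielding \eqref{eq:rate_reg} with $\cH_{n,1}=\cH_0$, $\cH_{n,2}=\cH_0^c$ and some $\epsilon_n=n^{-1/2}(\log n)^{\gamma}$.

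For (iv), the uniform continuity of $w\mapsto(s_\eta(\cdot|w),\psi_\eta(\cdot|w))$ in the relevant $L_2$-sense follows from differentiability of these maps in $w$, with $w$-derivatives dominated by the envelope $Q$ from (iii) and the uniformly compact support of $G$; this gives equicontinuity in $w$ that is uniform over $\eta_1,\eta_2\in\cH^N$. The principal obstacle is (v). Following the bracketing CLT strategy used for Corollary~\ref{cor:ex-dpm} in Section~\ref{subsubsec:7} (Section~2.11 of \cite{van1996weak}), it suffices to produce finite bracketing integrals for the score class $\{\score_{\theta,\eta,i}=Z_i s_\eta(\cdot-Z_i^T\theta|W_i):|\theta-\theta_0|<\epsilon_0,\,\eta\in\cH^N\}$ under the $L_2(P_{\theta_0,\eta_0,i})$-seminorms, together with a Lindeberg-type tail condition. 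The $(\theta,Z_i)$-dependence is finite-dimensional and Lipschitz in $\theta$ with envelope $Q$, contributing polynomial entropy. The $\eta$-dependence factors into the $f$-part, whose Hellinger bracketing entropy is polylogarithmic by the normal-mixture construction of \cite{ghosal2001entropies}, and the $G$-part, which on the fixed compact $[-M_b,M_b]^q$ contributes parametric entropy (via a standard discretization). The cube-integrability of $Q$ ensures the Lindeberg condition uniformly across brackets. This yields asymptotic tightness of \eqref{eq:score_process_lm} and completes the verification of all hypotheses of Theorem~\ref{th:main-lm}, from which the BvM assertion for $\theta$ follows.
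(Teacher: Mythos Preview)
Your overall strategy---verify the hypotheses of Theorem~\ref{th:main-lm}---matches the paper, and parts (i), (iii), (iv) are essentially what the paper does. Two points need correction.

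In (ii), Lemma~\ref{lem:lm-consistency2} is only a qualitative continuity statement: it says $h_n$ and the averaged $K,V$ tend to zero (uniformly in $n$) as the triple metric $|\cdot|\times h\times d_W$ does, but it supplies no modulus. It therefore cannot ``translate'' the Ghosal--van~der~Vaart rate $n^{-1/2}(\log n)^{3/2}$ for $\cF_0$ into an $h_n$-entropy or KL-mass bound at any specific rate. The paper's argument is simpler and does not pass through \cite{ghosal2001entropies} at all: since $\Theta\times\cF_0\times\cG_0$ is totally bounded for the triple metric and the lemma gives uniform continuity, the $h_n$-covering numbers are finite at every scale and every triple-metric neighborhood of $(\theta_0,f_0,G_0)$ has positive prior mass; hence \emph{some} $\epsilon_n\to0$ with $n\epsilon_n^2\to\infty$ satisfies \eqref{eq:rate_reg}. (No specific rate is needed here, thanks to the no-bias property from symmetry.) Incidentally, ``$G_0$ thick at $0$'' is a condition on the true random-effects distribution (used for identifiability in Lemma~\ref{lem:lm_consistency1}), not on the prior $\Pi_\cG$; the prior-mass bound comes simply from $G_0\in\cG_0$, the weak support of $\Pi_\cG$.

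The real gap is (v). Your factorization of the $\eta$-entropy into an $f$-part via Hellinger bracketing and a $G$-part does not deliver $L_2$-bracketing for the score class. The score $s_\eta(y|w)=-\partial_y\log\int\prod_j f(y_j-b^Tw_j)\,dG(b)$ is a ratio of integrals, and Hellinger closeness of $f_1,f_2$ does not by itself control $\|s_{\eta_1}(\cdot|w)-s_{\eta_2}(\cdot|w)\|_{L_2(\Psi^w_{\eta_0})}$; the ``polylogarithmic'' entropy of \cite{ghosal2001entropies} is about densities, not their log-derivatives after mixing over $G$. The paper's route (Section~\ref{subsubsec:4-6}) avoids this entirely: it shows that the maps $(x,w)\mapsto g_{\eta,j}(x|w)$ have $\alpha$-H\"older norms on truncated boxes $[-t,t]^m\times[-L,L]^{qm}$ bounded by a polynomial in $t$, uniformly over $\eta\in\cH_0$, applies the smooth-function entropy bound (Theorem~2.7.1 of \cite{van1996weak}) with $\alpha=d=(q+1)m$ to get $\log N(\delta,\cS_t,\|\cdot\|_\infty)\lesssim t^{(q+2)m+1}/\delta$, and then truncates at $M_\delta=-\log\delta$ using the Gaussian tails of $\psi_{\eta_0}$. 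This yields $\log N^n_{[]}(\delta,\scrF)\lesssim\delta^{-3/2}$, whose square root is integrable at $0$, so the bracketing CLT applies.
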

\begin{proof}
We may assume that $\Theta$ is compact, and let $\Theta_{n,1} = \Theta$ and
$\cH_{n,1} = \cH_0$ for all $n \geq 1$.
It is easy to show that $\rho_{\min}(v_{\eta_0}(w)) > 0$ for every $w$
and $w \mapsto v_{\eta_0}(w)$ is continuous.
We prove in Section \ref{subsubsec:4-4.1} that
\be
  \label{eq:density_bound-lm}
  \begin{split}
  C_1 \exp(-C_2 |x|^2) &\leq \inf_w \inf_{\eta\in\cH_0}\psi_\eta(x|w)\\
    &\leq \sup_w \sup_{\eta\in\cH_0} \psi_\eta(x|w) \leq C_3 \exp(-C_4 |x|^2)
  \end{split}
\ee
for some constants $C_1, C_2, C_3, C_4 > 0$ and large enough $|x|$.
Also, the first and second order partial derivative of
$x\mapsto \ell_\eta(x|w)$ are of order $O(|x|^2)$ as
$|x| \rightarrow \infty$ for every $\eta \in \cH_0$ and $w$,
so, with $Q(x,w) = C_5 (1 + |x|^2)$ for some $C_5 > 0$, we have,
\be\label{eq:Q-dpm-lm}
  \sup_{\eta\in\cH_0}\frac{|\ell_{\eta}(x+y|w)- \ell_{\eta}(x|w)|}{|y|}
  \vee \frac{|s_{\eta}(x+y|w)- s_{\eta}(x|w)|}{|y|}  \leq  Q(x,w),
\ee
for every $x, w$ and small enough $|y|$, and,
\be\label{eq:Q-dp-lm}
  \int \sup_w \sup_{\eta\in\cH_0} Q^3(x,w) \psi_{\eta}(x|w) d\mu(x) < \infty.
\ee

We next prove \eqref{eq:rate_reg} with the help of Lemma
\ref{lem:lm-consistency2}. Since $\Pi_\Theta(\Theta_{n,1}) = 
\Pi_\cH(\cH_{n,1}) = 1$, the third inequality of \eqref{eq:rate_reg}
holds trivially. By \eqref{eq:density_bound-lm},
\bean
  \int \sup_{\eta_1, \eta_2\in\cH_0} \bigg|
    \frac{\psi_{\eta_0}(x|w)}{\psi_\eta(x|w)} \bigg|^{2\delta_0} 
  \psi_{\eta_2}(x|w) d\mu(x) < \infty,
\eean
for sufficiently small $\delta_0 > 0$, so combining with
\eqref{eq:Q-dp-lm}, \eqref{eq:Q_0-integrability} and
\eqref{eq:Q_0-lip} hold for some $Q_0$. Uniform tightness of $\cF_0$
and \eqref{eq:cF_0-bound} is easily satisfied, so the conclusion of
Lemma \ref{lem:lm-consistency2} holds. By
\eqref{eq:h_n-consistency-lm}, the first inequality of
\eqref{eq:rate_reg} holds for some rate sequence $\epsilon_{n,1}$
because $\Theta\times\cF_0\times\cG_0$ is totally bounded with respect
to the product metric $|\cdot| \times h \times d_W$.
Also, by \eqref{eq:KL-consistency-lm}, the second inequality of
\eqref{eq:rate_reg} holds for some $\epsilon_{n,2}$ because every
$|\cdot| \times h \times d_W$ neighborhoods of $(\theta_0, f_0, G_0)$
has positive prior mass. Thus, \eqref{eq:rate_reg} holds with
$\epsilon_n = \max\{\epsilon_{n,1}, \epsilon_{n,2}\}$.

To complete the proof, equicontinuity of \eqref{eq:w_class} is
proved in Section~\ref{subsubsec:4-5} and asymptotic tightness
of \eqref{eq:score_process_lm} in
Section~\ref{subsubsec:4-6}.
\end{proof}
It should be noted that the only condition
for $\Pi_\cG$ is that $G_0\in \cG_0$. Thus, we can consider both
parametric and nonparametric priors for $G$. For example, the multivariate
normal distribution truncated on $[-M_b,M_b]^q$ or the symmetrized
${\rm DP}(\alpha, H_G)$ prior with a distribution $H_G$ on
$[-M_b,M_b]^q$ can be used for $\Pi_\cG$.

\subsubsection{Random series prior}

Let $\Pi_\cF$ be the random series prior defined in
Section~\ref{sssec:rs} and let
$\cF_0$ be the support of $\Pi_\cF$.
Since the distributions in $\cF_0$ have compact supports, the
distributions in $\cG_0$, the support of $\Pi_\cG$, should 
have the same support for  \eqref{eq:Lipsc_cond_lm} to hold. 
Hence, we only  consider truncated normal distributions truncated
on $[-M_b,M_b]^q$ with positive definite covariance matrixes.
That is, $\cG_0=\{N_{M_b}(0,\Sigma): 0<\rho_1\le \rho_{\min}(\Sigma)
\le \rho_{\max}(\Sigma)\le \rho_2<\infty\}$ for some
constants $\rho_1$ and $\rho_2$, where $N_{M_b}(0, \Sigma)$ denotes
the truncated normal distribution. Let $\Pi_\cH=\Pi_\cF\times \Pi_\cG$.

 \begin{corollary}\label{cor:ex-rs-lm}
Assume that $\liminf_n \rho_{\rm min}(\bZ_n)>0$ and
$\rho_{\min}( v_{\eta_0}(w)) > 0$ for every $w$.
With the prior $\Pi_\cH$ described above, the BvM theorem holds
for the linear mixed regression model provided that $\eta_0 \in
\cH_0$, and $\Pi_\Theta$ is compactly supported and thick at
$\theta_0$ provided \eqref{eq:lm_covariate} and \eqref{eq:design} hold.
\end{corollary}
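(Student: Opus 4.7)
The plan is to verify each hypothesis of Theorem \ref{th:main-lm} and the conditions peculiar to this example. Since $\Pi_\Theta$ is compactly supported I may assume $\Theta$ is compact and take $\Theta_{n,1}=\Theta$, $\cH_{n,1}=\cH_0$ for every $n$. Because $G_0\in\cG_0$ is a truncated normal with positive definite covariance on $[-M_b,M_b]^q$, its density is continuous and positive at $0$, so $G_0$ is thick at $0$. For $\alpha>3$ the series $M\sum_j j^{-\alpha+2}<\infty$, so each $w\in\cW$ has uniform bounds on $\|w\|_\infty$, $\|w'\|_\infty$ and $\|w''\|_\infty$; consequently each $\bar p_w\in\cF_0$ is bounded above and bounded below away from $0$, with bounded first two derivatives, on $[-1/2,1/2]$. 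Combined with compactness of the support of $G\in\cG_0$, this gives uniform bounds on $\psi_\eta(y|w)$, $s_\eta(y|w)$ and their derivatives in $y$ as $(y,w)$ ranges over the compact effective domain. In particular, $w\mapsto v_{\eta_0}(w)$ is continuous by dominated convergence, and \eqref{eq:Lipsc_cond_lm} holds with a \emph{bounded} $Q$, so $\sup_w\int Q^3\,\psi_{\eta_0}(x|w)\,d\mu(x)<\infty$ holds trivially.

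Next I would verify \eqref{eq:rate_reg} through Lemma \ref{lem:lm-consistency2}: uniform tightness of $\cF_0$ is immediate from compact support, \eqref{eq:cF_0-bound} is the uniform $C^1$ bound noted above, and \eqref{eq:Q_0-integrability}--\eqref{eq:Q_0-lip} follow from the uniform positive lower bound on $\psi_\eta$ together with boundedness of $Q$. The lemma then identifies $h_n$- and KL-neighborhoods of $(\theta_0,\eta_0)$ with $|\cdot|\times h\times d_W$-neighborhoods. Since $\cW$ is totally bounded in sup-norm by Arzel\`a-Ascoli and $\cG_0$ is compact in the weak topology, $\Theta\times\cH_0$ is totally bounded under $h_n$, yielding the entropy bound of \eqref{eq:rate_reg} for some $\epsilon_n\downarrow 0$. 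Prior concentration reduces, through Lemma \ref{lem:lm-consistency2}, to $\Pi_\cF$ giving positive mass to sup-norm balls around $w_0$ (established in the proof of Corollary \ref{cor:ex-series}) and to $\Pi_\cG$ giving positive mass to weak-topology neighborhoods of $G_0$ (which follows from positivity of the density of the covariance parameterization at $\Sigma_0$). The tail condition is trivial because $\Pi_\cH(\cH_0)=1$.

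It remains to verify the equicontinuity of \eqref{eq:w_class} and the asymptotic tightness of \eqref{eq:score_process_lm}. Equicontinuity I expect to be straightforward: $\psi_\eta(\cdot|w)$ depends on $w$ through averaging the bounded smooth $f$-shift $y_j-b^Tw_j$ against the compactly supported $G$, and the $C^2$ bounds on $\cW$ together with the uniform positive lower bound on $\psi_\eta$ yield uniform continuity in $w$ (uniformly in $\eta\in\cH^N$) of both $s_\eta(\cdot|w)$ and $\psi_\eta(\cdot|w)$; the maps $w\mapsto d_w(\eta_1,\eta_2)$ and $w\mapsto h(\psi_{\eta_1}(\cdot|w),\psi_{\eta_2}(\cdot|w))$ then inherit a common modulus of continuity by dominated convergence. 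The main obstacle will be the asymptotic tightness of the score process, which I plan to address by mirroring the bracketing argument used for the DPM case in Section \ref{subsubsec:4-6}. The substantial simplification here is that the envelopes $|s_\eta(y|w)|$ are \emph{uniformly bounded}, so an $L_2$-bracketing of $\{s_\eta(\cdot|w):\eta\in\cH^N\}$ can be constructed from a sup-norm net of $\cW$ and a weak-topology net of $\cG_0$; the resulting bracketing numbers grow only polynomially, making the bracketing integral finite, and the non-iid bracketing CLT (section 2.11 of \cite{van1996weak}) delivers asymptotic tightness. All hypotheses of Theorem~\ref{th:main-lm} are then in place, and the BvM assertion \eqref{eq:bvmassertion} follows.
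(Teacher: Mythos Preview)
Your approach matches the paper's: reduce to Theorem~\ref{th:main-lm} by taking $Q$ and $Q_0$ constant, invoke Lemma~\ref{lem:lm-consistency2} exactly as in Corollary~\ref{cor:ex-dpm-lm}, and handle the asymptotic tightness of \eqref{eq:score_process_lm} separately via a bracketing argument. One correction: the bracketing numbers do \emph{not} grow only polynomially. A sup-norm net of $\cW$ (or equivalently of $\cF_0\cup\{\score_f:f\in\cF_0\}$) has metric entropy of order $\delta^{-1}$ by Theorem~2.7.1 of \cite{van1996weak} with $\alpha=d=1$, while the parametric net of $\cG_0$ contributes only $\log(1/\delta)$; combining via the Lipschitz dependence of $s_\eta(\cdot|w)$ on $(f,G)$ gives $\log N^n_{[]}(\delta,\scrF)\lesssim \delta^{-1}$, not $\log(1/\delta)$. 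This does not damage your argument, since $\int_0^{\delta_n}\delta^{-1/2}\,d\delta<\infty$ and the bracketing CLT still applies, but the stated growth rate should be amended.
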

\begin{proof}
Replacing $Q$ and $Q_0$ as constant functions, the proof is almost
identical to that of Corollary~\ref{cor:ex-dpm-lm}, except for the
proof of asymptotic tightness of \eqref{eq:score_process_lm}, which
is proved in Section~\ref{subsubsec:4-7}.
\end{proof}

\section{Numerical study}\label{sec:simulation}

In this section, we provide simulation results to illustrate
semi-parametric efficacy of the Bayes estimator in the linear
mixed effect model. We specialize the model introduced in
section~\ref{sec:lm} slightly: we only consider the random intercept
model,
\begin{equation}
  \label{eq:lme}
  X_{ij} = \theta^T Z_{ij} + b_i + \epsilon_{ij},
\end{equation}
where the $b_i$'s are univariate random effects following a
normal distribution with mean 0 and variance $\sigma_b^2$.
\begin{figure}
\caption{Density plots of error distribution in E4 (left) and E5 (right).}
\label{fig:density}
\bc \includegraphics[width=100 mm, height=45 mm]
  {./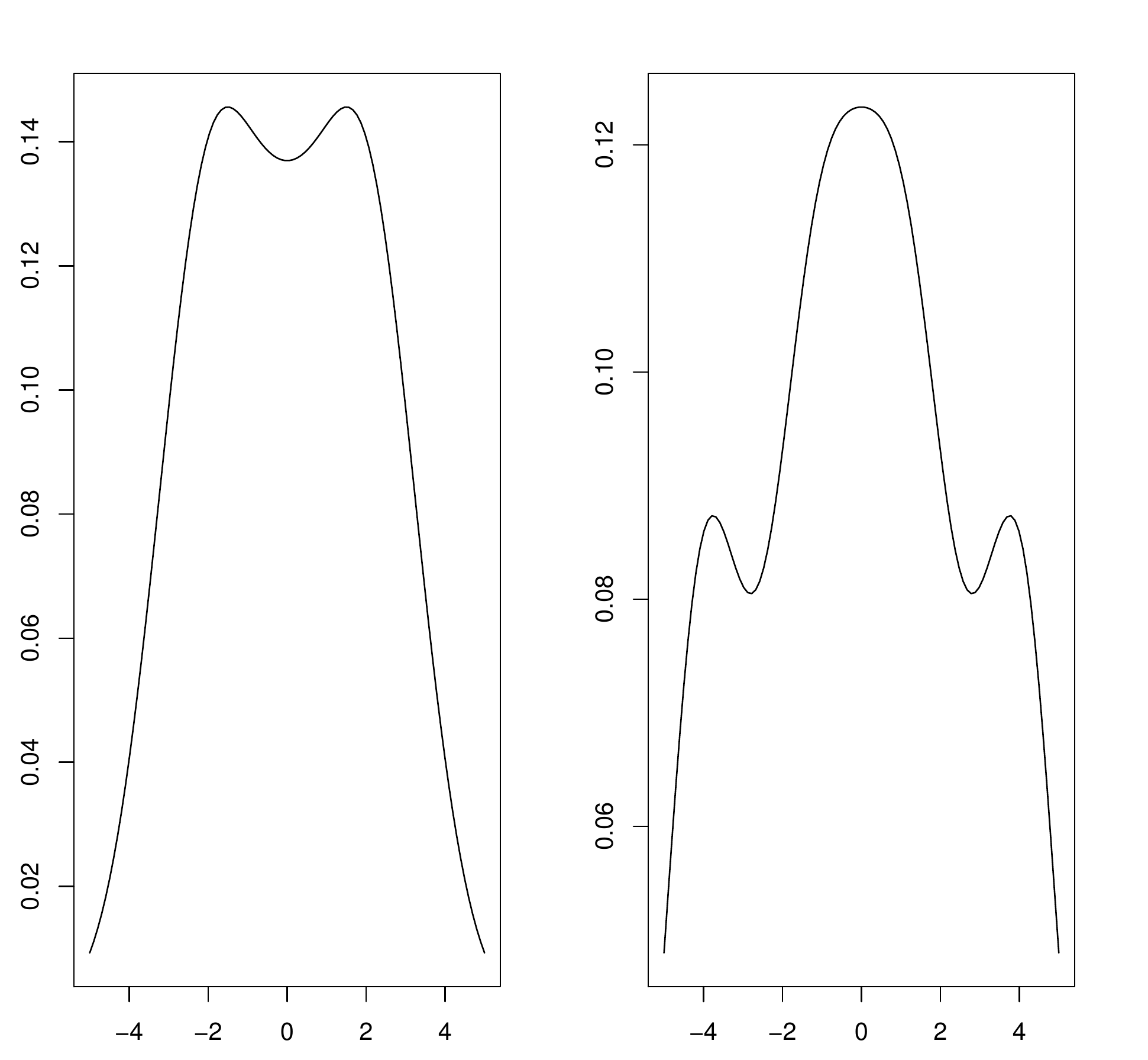} \ec
\end{figure} 
In simulations, a dataset is generated from model \eqref{eq:lme}
with various error distributions. Then, the regression parameters
$\theta$ are estimated using various methods including both
frequentist and Bayesian approaches for comparison. This
procedure is repeated $N$ times and the performance of
estimation methods is evaluated by mean squared error, 
$N^{-1} \sum_{k=1}^N |\hat\theta^{(k)}_n - \theta_0|^2$,
where $\hat\theta^{(k)}_n$ is the estimate in the $k$th simulation.
We compare the performance of 3 estimators under 5 error
distributions. In all simulations we let
$Z_{ij} = (Z_{ij1}, Z_{ij2})^T$, where the $Z_{ijk}$'s
are generated \iid\ from the Bernoulli distribution
with success probability 1/2. The true parameters $\theta_0$
and $\sigma^2_{0b}$ are set to be $(-1,1)^T$ and 1, respectively.
For the error distribution, we consider the standard normal
distribution (E1), the Student $t$-distributions with 2 degree of
freedom (E2), the uniform(-3,3) distribution (E3), and two mixtures of
normal distribution (E4 and E5). For the mixtures we take,
\[
  p(x) = \sum_{k=1}^K \pi_k\Big(\phi_1 (x-\mu_k) + \phi_1 (x+\mu_k)\Big),
\]
with $K=4$, 
\bean
	(\mu_1,\mu_2,\mu_3,\mu_4)=(0,1.5,2.5,3.5); \quad
	(\pi_1,\pi_2,\pi_3,\pi_4)=(0.1,0.2,0.15,0.05),
\eean
for E4, and $K=4$, 
\bean
	(\mu_1,\mu_2,\mu_3,\mu_4)=(0,1,2,4); \quad
	(\pi_1,\pi_2,\pi_3,\pi_4)=(0.05,0.15,0.1,0.2),
\eean
for E5.
These two densities (see Figure~\ref{fig:density}) have two
and three modes, respectively.

For the estimators of $\theta$, we consider one frequentist estimator
(F) (the maximum likelihood estimator under the assumption of a
normal error and normal random effect, which is equal to Henderson's
{\it best linear unbiased estimator} \cite{henderson1975best}), and
two Bayesian estimators
(B1 and B2). For the two Bayes estimators, we consider two
different priors for the distribution of $\eta$: the normal
distributions with mean 0 and variance $\sigma_\epsilon^2$ for
$f$ and normal distribution with mean 0 and variance $\sigma_b^2$
for $G$ (B1), and a symmetrized Dirichlet process mixture for $f$
and normal distribution with mean 0 and variance $\sigma_b^2$
for $G$ (B2). Independent inverse Gamma distributions are
used for the priors of $\sigma_\epsilon^2$
and $\sigma_b^2$, and independent diffuse
normal distributions are used for the prior of $\theta$.

\begin{table*}
\caption{Mean squared error (and relative efficiency with respect
to B2) of each methods F, B1 and B2 among $N=300$ repetitions for
each experiment E1--E5.}
\label{tab:sim_res}
\bc
\begin{tabular}{c|rrr}
	\hline
	\multicolumn{2}{r}{F}   & B1    & B2   \\
	\hline
	E1	& 0.03		& 0.03		& 0.03 \\
		& (0.98)	& (0.98)	& (1.00) \\
	E2	& 0.27		& 0.26		& 0.09 \\
		& (3.06)	& (2.99)	& (1.00) \\
	E3	& 0.07		& 0.07		& 0.05 \\
		& (1.40)	& (1.39)	& (1.00) \\
	E4	& 0.13		& 0.12		& 0.11 \\
		& (1.18)	& (1.16)	& (1.00) \\
	E5	& 0.19		& 0.19		& 0.17 \\
		& (1.13)	& (1.12)	& (1.00) \\
  \hline
\end{tabular}
\ec
\end{table*}

For each error distribution, $N=300$ datasets with $n=20$ and
$m_i=5$ for all $i$, are generated. The mean squared errors and
relative efficiencies (with respect to B2) of the three estimators
are summarized in Table~\ref{tab:sim_res}. B2 dominates the other
two estimators when the error distribution is other than the normal.
In particular, the losses of efficiency  for F and B1 compared
to B2 are relatively large when the error distribution
has a heavier tail than the normal distribution (\eg\ E2).

\appendix

\section{Appendix}\label{sec:appendix}

\subsection{Proofs for Section \ref{sec:regression}}\label{ssec:1}

\subsubsection{Proof of \eqref{eq:unif_quad_reg}}\label{subsubsec:1}
Since,
\bean
  & & \sup_{\eta\in\cH^N} \left| \frac{1}{n}P_0^{(n)}
  \Big( \ell^{(n)}_{\theta,\eta} - \ell^{(n)}_{\theta_0,\eta}\Big)
  + \frac{1}{2} (\theta-\theta_0)^T V_{n,\eta} (\theta-\theta_0) \right|\\
 &\le& \sup_{i\geq 1} \sup_{\eta\in\cH^N} \left| P_{\eta_0}
  \log \frac{\eta\big(X-(\theta-\theta_0)^T Z_i\big)}{\eta(X)}
  + \frac{1}{2} v_\eta (\theta-\theta_0)^T Z_i Z_i^T (\theta-\theta_0)\right|,
\eean
where $X\sim P_{\eta_0}$, it suffices to show that,
\begin{equation}
  \label{eq:uql}
   \sup_{\eta\in \cH^N}\Bigl| P_{\eta_0}\log\frac{\eta(X-y)}
    {\eta(X)}+\frac{y^2}{2}v_{\eta} \Bigr|=o(y^2),
\end{equation}
as $y\rightarrow0$.

We consider only the case $y > 0$; the case $y<0$ is treated
similarly. For $\eta\in \cH^N$, we have,
\be
\begin{split}\label{eq:qil_pf}
  & \int \log \frac{\eta(x-y)}{\eta(x)} \eta_0(x) dx
    \\
  & = \int_{-\infty}^0 \log \frac{\eta(x-y/2)}{\eta(x+y/2)} \eta_0(x+y/2) dx
    + \int_0^\infty \log \frac{\eta(x-y/2)}{\eta(x+y/2)} \eta_0(x+y/2) dx
    \\
  & = \int_0^\infty \log \frac{\eta(-x-y/2)}{\eta(-x+y/2)} \eta_0(-x+y/2) dx
    + \int_0^\infty \log \frac{\eta(x-y/2)}{\eta(x+y/2)} \eta_0(x+y/2) dx
    \\
  & = \int_0^\infty \log \frac{\eta(x+y/2)}{\eta(x-y/2)} \eta_0(x-y/2) dx
    + \int_0^\infty \log \frac{\eta(x-y/2)}{\eta(x+y/2)} \eta_0(x+y/2) dx
    \\
  & = -\int_0^\infty \left[ \ell_\eta\Big(x - \frac{y}{2}\Big) 
    - \ell_\eta \Big(x + \frac{y}{2}\Big) \right] \cdot
    \left[ {\eta_0}\Big(x-\frac{y}{2} \Big) - 
    {\eta_0}\Big(x+\frac{y}{2} \Big) \right] dx
    \\
  & = -\int_{-y/2}^\infty \Big[\ell_\eta(x+y) - \ell_\eta(x)\Big] \cdot
    \Big[{\eta_0}(x+y) - {\eta_0}(x)\Big] dx
    \\
  & = -\int_0^\infty \Big[\ell_\eta(x+y) - \ell_\eta(x)\Big] \cdot
    \Big[{\eta_0}(x+y) - {\eta_0}(x)\Big] dx + R(y,\eta),
\end{split}
\ee
where the third equality holds by the symmetry of $\eta$ and $\eta_0$,
and,
\[
  R_n(y,\eta) = -\int_{-y/2}^0
  \Big[\ell_\eta(x+y) - \ell_\eta(x)\Big] \cdot
  \Big[{\eta_0}(x+y) - {\eta_0}(x)\Big] dx.
\]
Note that $\sup_{\eta\in\cH^N} |R(y,\eta)| = o(y^2)$ as
$y \rightarrow 0$ because (\ref{eq:Lipsc_cond_reg}) implies,
\bean
  |R(y,\eta)| &=& 
    \bigg|\int_{-y/2}^0 \Big[\ell_\eta(x+y) - \ell_\eta(x)\Big] \cdot
    \Big[{\eta_0}(x+y) - {\eta_0}(x)\Big] dx\bigg|
    \\
  &=& y \cdot \bigg|\int_{-y/2}^0 \int_0^1
    \Big[\ell_\eta(x+y) - \ell_\eta(x)\Big] \cdot
    \dot\eta_0(x+ty) dt\; dx\bigg|
    \\
  &\leq& 2y^2\int_0^1 \int_{-y/2}^0 Q(x+ty)\cdot
    |s_{\eta_0}(x+ty)| \cdot \eta_0(x+ty) dx\;dt
    \\
  &\leq& 2y^2\int_0^1 \int_{-y/2}^0 Q^2(x+ty) \cdot \eta_0(x+ty) dx\;dt
    \lesssim y^3
\eean
for small enough $y$ by the continuity of $Q$ and $\eta_0$,
where $\dot\eta(x) = \partial\eta(x)/\partial x$.
Finally, a Taylor expansion and Fubini's theorem imply that
the last integral of \eqref{eq:qil_pf} is equal to,
\be\label{eq:y_s_eta}
  y^2 \int_0^1\int_0^1\int_0^\infty s_\eta(x+ty) \dot\eta_0 (x+sy)\;
  dx\, dt\, ds.
\ee
Since,
\be\label{eq:y_vy}
\frac{y^2}{2} v_\eta = -y^2\int_0^\infty s_\eta(x)\;{\dot\eta_0}(x) dx,
\ee
the sum of \eqref{eq:y_s_eta} and \eqref{eq:y_vy} is bounded by,
\bean
  && y^2\bigg|\int_0^1\int_0^1 \int_0^\infty 
    s_\eta(x+ty) \dot\eta_0 (x+sy) - s_\eta(x) \dot\eta_0 (x) 
    \; dx\, dt\, ds\bigg|\\
  &&\leq y^2 \int_0^1\int_0^1 \int_0^\infty 
    \Big| \Big\{s_\eta(x+ty) - s_\eta(x+sy)\Big\}\dot\eta_0(x+sy)
    \Big|\; dx\, dt\, ds\\
  && ~~~ + \;y^2 \bigg|\int_0^1 \int_0^\infty \Big[
    s_\eta(x+sy) \dot\eta_0 (x+sy) - s_\eta(x)\dot\eta_0 (x) \Big]
    \; dx\, ds\bigg|
    \\
  &&\leq y^3 \int Q(x) |\dot\eta_0 (x)| dx
    + y^2 \sup_{s\in[0,1]} \bigg|\int_0^{sy} s_\eta(x) \dot\eta_0 (x) dx\bigg|
    \\
  &&\leq y^3 P_{\eta_0}Q^2 + y^2 \int_0^y Q^2(x) \eta_0(x) dx
    = O(y^3),
\eean
as $y \rightarrow 0$.
\qed

\subsubsection{Proof of \eqref{eq:d2_domination}}\label{subsubsec:2}

For a sequence $(\eta_n)$ such that $\eta_n \in \cH_n$ and
$\sup_{\eta\in\cH_n} d_2(\eta,\eta_0) < d_2(\eta_n,\eta_0) + n^{-1}$,
it suffices to show that $d_2(\eta_n,\eta_0) \rightarrow 0$.
By the definition of $\cH_n$, we have $h(\eta_n, \eta_0) \rightarrow 0$.
We first prove that $\ell_{\eta_n}$ converges to $\ell_{\eta_0}$ pointwise.
Suppose  $\ell_{\eta_n}(x) \nrightarrow \ell_{\eta_0}(x)$ for some
$x\in \DD$. Then we can choose an $\epsilon > 0$ and a subsequence
$m(n)$ such that $m(n) \geq N$ and
$|\ell_{\eta_{m(n)}}(x) - \ell_{\eta_0}(x)| > \epsilon$ for every $n$.
Note that $\ell_\eta$ is continuously differentiable and the
derivative of $\ell_\eta$ is bounded by a continuous function $Q$
uniformly in $\eta\in\cH^N$ by \eqref{eq:Lipsc_cond_reg}.
Thus we can choose a $\delta > 0$ such that
$|\ell_{\eta_{m(n)}}(y) - \ell_{\eta_0}(y)| > \epsilon/2$ for every
$n\geq 1$ and $y$ with $|y-x| < \delta$.
Note that $\delta > 0$ can be chosen sufficiently small so that
$\eta_0(y) > \eta_0(x)/2$ for every $y$ with $|y-x| < \delta$.
Since $\ell_{\eta_{m(n)}}(y) - \ell_{\eta_0}(y) 
= 2\log \sqrt{\eta_{m(n)}(y)/\eta_0(y)}$, there exists a 
$\bar\epsilon > 0$ such that
$\left|1-\sqrt{\eta_{m(n)}(y)/\eta_0(y)}\right| > \bar\epsilon$
for every $n\geq 1$ and $y$ with $|y-x| < \delta$. Note that, 
\[
  h^2(\eta_{m(n)}, \eta_0) \geq \int_{(x-\delta,x+\delta)}
  \bigg( 1 - \sqrt{\frac{\eta_{m(n)}}{\eta_0}} \bigg)^2 dP_{\eta_0}
  \geq \delta\bar\epsilon^2 \eta_0(x) > 0,
\]
for every $n\geq 1$, which contradicts $h(\eta_n, \eta_0) \rightarrow
0$. Conclude that $\ell_{\eta_n}(x) \rightarrow \ell_{\eta_0}(x)$, for
every $x$.

By \eqref{eq:Lipsc_cond_reg}, we have for every sufficiently small $y>0$,
\bean
&& \sup_{\eta\in\cH^N} \left| \int \left[
  \frac{\ell_{\eta}(x+y) - \ell_{\eta}(x)}{y} +
  s_{\eta_0}(x) \right]^2  - 
  \Big(s_{\eta}(x) - s_{\eta_0}(x)\Big)^2 dP_{\eta_0}(x) \right|\\
&=& \sup_{\eta\in\cH^N} \bigg|
  \int \bigg\{ \int_0^1 \Big[s_{\eta}(x+ty) - s_{\eta}(x)\Big] dt
  \\
  && \qquad \qquad \times \left[\frac{\ell_{\eta}(x+y) - \ell_{\eta}(x)}{y} 
  - s_{\eta}(x) + 2s_{\eta_0}(x) \right]\bigg\} dP_{\eta_0}(x) \bigg|\\
&\leq& \sup_{\eta\in\cH^N} y \bigg|
  \int Q(x) \times\left[\frac{\ell_{\eta}(x+y) - \ell_{\eta}(x)}{y} 
  - s_{\eta}(x) + 2s_{\eta_0}(x) \right] dP_{\eta_0}(x) \bigg|\\
  &=& o(1),
\eean
as ${y \downarrow 0}$.
By the Moore-Osgood theorem \cite{taylor2012general}, this enables
us to interchange the two limits in the following equality
\be\begin{split}\label{eq:yd1}
  \lim_{n\rightarrow\infty} P_{\eta_0}(s_{\eta_n} - s_{\eta_0})^2
  &= \lim_{n\rightarrow\infty} \lim_{y \downarrow 0} \int \left[
  \frac{\ell_{\eta_n}(x+y) - \ell_{\eta_n}(x)}{y} +
  s_{\eta_0}(x) \right]^2 dP_{\eta_0}(x)\\
  &= \lim_{y \downarrow 0} \lim_{n\rightarrow\infty} \int \left[
  \frac{\ell_{\eta_n}(x+y) - \ell_{\eta_n}(x)}{y} +
  s_{\eta_0}(x) \right]^2 dP_{\eta_0}(x).
\end{split}\ee
The right-hand side of \eqref{eq:yd1} is equal to 0 by dominated
convergence based on pointwise convergence of $\ell_{\eta_n}$ 
to $\ell_{\eta_0}$.
\qed

\subsubsection{Proof of Lemma \ref{lem:reg_consistency}}\label{subsubsec:3}

Since $\eta_0$ is continuous and $\eta_0(0) > 0$, there exist
constants $C > 0$ and $\delta > 0$
such that $\int_\gamma^\infty \eta_0(x) dx < 1/2 -
C(\gamma\wedge\delta)$ for every $\gamma > 0$.
Let $\epsilon >0$ be a constant such that 
$\epsilon < aC\delta$, where $a^2 = \liminf_n\rho_{\rm min}(\bZ_n)/(2L^2)$.

For a given large enough $n$, fix $\eta\in\cH_n$ with
$h_n\big((\theta,\eta),(\theta_0,\eta_0)\big) < \epsilon$.
Since the Hellinger distance is bounded below by half of the total
variational distance, we have,
\be
\label{eq:yd2}
    h^2(p_{\theta,\eta,i}, p_{\theta_0,\eta_0,i}) \geq
   d_V^2(p_{\theta,\eta,i}, p_{\theta_0,\eta_0,i})/4
  = \sup_B |P_{\theta,\eta,i}(B) - P_{\theta_0,\eta_0,i}(B)|^2. 
\ee
By letting $B=[\theta^T Z_i, \infty)$ in \eqref{eq:yd2}, we have,
\be
\label{eq:yd3}
h^2(p_{\theta,\eta,i}, p_{\theta_0,\eta_0,i})
  \geq \Big( \int_{|(\theta-\theta_0)^T Z_i|}^\infty \eta_0(x) dx
    - \frac{1}{2} \Big)^2
  \geq C^2 \big(|(\theta-\theta_0)^T Z_i|\wedge\delta\big)^2.
\ee
Let $\NN_{\delta,n} = \{i \leq n: |(\theta-\theta_0)^TZ_i| \geq \delta\}$
and let $N_{\delta,n}$ denote its cardinality. Then \eqref{eq:yd3}
implies,
\bea
  \epsilon^2 \geq h_n^2((\theta,\eta),\theta_0,\eta_0)) 
  &\geq& \frac{C^2}{n} \sum_{i=1}^n \big(|(\theta-\theta_0)^T Z_i|
    \wedge \delta\big)^2\nonumber\\
  &\geq& \frac{C^2 N_{\delta,n} \delta^2}{n} 
    + \frac{C^2}{n}\sum_{i\notin\NN_{\delta,n}}
      |(\theta-\theta_0)^TZ_i|^2. \label{eq:yd4}
\eea    
The first term of \eqref{eq:yd4} is greater than
$N_{\delta,n}\epsilon^2/(na^2)$ since $\epsilon< aC\delta$, which
implies $N_{\delta,n}/n< a^2$.
On the other hand, for the second term of \eqref{eq:yd4}, note that,
\[
\sum_{i\notin\NN_{\delta,n}}|(\theta-\theta_0)^TZ_i|^2
\geq \sum_{i=1}^n|(\theta-\theta_0)^TZ_i|^2
- N_{\delta,n} \max_i |(\theta-\theta_0)^TZ_i|^2
\]
Since $\sum_{i=1}^n|(\theta-\theta_0)^TZ_i|^2 \geq n |\theta-\theta_0|^2
\rho_{\rm min}(\bZ_n)$ and $\max_i |(\theta-\theta_0)^TZ_i|^2
\leq  L^2 |\theta-\theta_0|^2$, we have,
\be
\label{eq:yd5}
\frac{C^2}{n}\sum_{i\notin\NN_{\delta,n}}|(\theta-\theta_0)^TZ_i|^2
\geq C^2 |\theta-\theta_0|^2 \Big( \rho_{\rm min}(\bZ_n)
    - L^2 \frac{N_{\delta,n}}{n}\Big). 
\ee
Since $N_{\delta,n}/n< a^2$ and
$a^2=\liminf_n\rho_{\rm min}(\bZ_n) / (2L^2)$, \eqref{eq:yd4} and
\eqref{eq:yd5} together imply $|\theta-\theta_0|^2 \leq K_1
\epsilon^2$, where $K_1 = 2/\big(C^2\rho_{\rm min}(\bZ_n)\big)$.

The proof is complete if we show that $h(\eta, \eta_0) < K \epsilon$
for some constant $K>0$. Note that for every $i$,
\be\label{eq:h_triangle2}
  h^2(\eta,\eta_0) = h^2(p_{\theta,\eta,i}, p_{\theta,\eta_0,i})
  \leq 2\bigl(h^2(p_{\theta,\eta,i}, p_{\theta_0,\eta_0,i})
    + h^2(p_{\theta_0,\eta_0,i}, p_{\theta,\eta_0,i})\bigr).
\ee
In addition, there exists a constant $K_2>0$ such that,
\be \label{eq:h_lips}
  \sup_i h^2(p_{\theta_0,\eta_0,i}, p_{\theta,\eta_0,i})
  \leq K_2 |\theta-\theta_0|^2,
\ee
for every $\theta$ that is sufficiently close to $\theta_0$ because
(denote $\dot \eta_0= d\eta_0/dx$),
\bean
	&& \int \Big(\sqrt{\eta_0(x+y)} - \sqrt{\eta_0(x)}\Big)^2 dx
	= y^2 \int \bigg(\int_0^1 \frac{\dot\eta_0(x+ty)}
	{\sqrt{\eta_0(x+ty)}}dt\bigg)^2 dx \\
	&& ~~ \leq y^2 \int \int_0^1 \bigg(\frac{\dot\eta_0(x+ty)}
	{\eta_0(x+ty)}\bigg)^2 \eta_0(x+ty) \;dt\,dx 
	\leq y^2 P_{\eta_0} Q^2,
\eean
for small enough $y$, where the last inequality holds by Fubini's
theorem and \eqref{eq:Lipsc_cond_reg}. So we have,
\bean
  h^2(\eta,\eta_0) &\leq& \frac{1}{n}\sum_{i=1}^n 2
  \bigl(h^2(p_{\theta,\eta,i}, p_{\theta_0,\eta_0,i})
    + h^2(p_{\theta_0,\eta_0,i}, p_{\theta,\eta_0,i})\bigr) \\
    &\leq& 2 h_n^2 ((\theta,\eta), (\theta_0,\eta_0))
  + 2K_2|\theta-\theta_0|^2,
\eean
where the first inequality holds by \eqref{eq:h_triangle2}
and the second inequality holds by the definition of $h_n$ and
\eqref{eq:h_lips}. Since we have already shown that
$|\theta-\theta_0|^2< K_1 \epsilon^2$, we conclude that
$h(\eta,\eta_0)\le K\epsilon$, where $K=\sqrt{2+2K_1 K_2}$.
\qed

\subsubsection{Proof of (\ref{eq:AB_n_sqrtn})}\label{subsubsec:4}

We start by proving the following two claims: for every
$\widetilde M_n \rightarrow \infty$ with
$\widetilde M_n/\sqrt{n}\rightarrow 0$,
\begin{equation}
\label{eq:lr_apprx_small_condition}
  \sup_{|h| \leq \widetilde M_n} \sup_{\eta\in \cH^N}
  \left|\left( \ell^{(n)}_{\theta_n(h),\eta} - \ell^{(n)}_{\theta_0,\eta}
  - \frac{h^T}{\sqrt{n}} \score_{\theta_0,\eta}^{(n)} \right)^o\right|
  =  o_{P_0}(\widetilde M_n^2),
\end{equation}
and,
\begin{equation}\label{eq:lr_apprx_large_condition}
  \sup_{\widetilde M_n < |h| < \epsilon \sqrt{n}} \sup_{\eta\in\cH^N}
  \left|\left( \ell^{(n)}_{\theta_n(h),\eta} - \ell^{(n)}_{\theta_0,\eta}
  \right)^o\right| \cdot |h|^{-2}
  = o_{P_0}(1),
\end{equation}
for sufficiently small $\epsilon > 0$.

First, the equality,
\[
  \left( \ell^{(n)}_{\theta_n(h),\eta} - \ell^{(n)}_{\theta_0,\eta} 
    - \frac{h^T}{\sqrt{n}} \score_{\theta_0,\eta}^{(n)} \right)^o
  = \frac{h^T}{\sqrt{n}}\int_0^1(\score_{\theta_n(th),\eta}^{(n)}
    -\score_{\theta_0,\eta}^{(n)})^o dt,
\]
implies that the left-hand side of \eqref{eq:lr_apprx_small_condition}
is bounded by,
\be\label{eq:score_diff}
  \sup_{|h| \leq \widetilde M_n}\sup_{\eta\in\cH^N}
    \bigg|\frac{\widetilde M_n}{\sqrt{n}}
  (\score_{\theta_n(h),\eta}^{(n)} -\score_{\theta_0,\eta}^{(n)})^o\bigg|.
\ee
Since,
\bean
  \sup_{|h| \leq \widetilde M_n}\sup_{\eta\in\cH^N}
    \bigg|\frac{1}{\sqrt{n}} (\score_{\theta_n(h),\eta}^{(n)}
      -\score_{\theta_0,\eta}^{(n)})^o\bigg| = O_{P_0}(1),
\eean
by asymptotic tightness of \eqref{eq:score_process_reg}, we
conclude \eqref{eq:score_diff} is of order $o_{P_0}(\widetilde M_n^2)$.

Similarly by the equality,
\[
  \left( \ell^{(n)}_{\theta_n(h),\eta} - \ell^{(n)}_{\theta_0,\eta} \right)^o
  =\frac{h^T}{\sqrt{n}}\int_0^1\Big(\score^{(n)}_{\theta_n(th),\eta}\Big)^o dt,
\]
the left-hand side of \eqref{eq:lr_apprx_large_condition} is bounded by,
\be\label{eq:varying_score}
  \sup_{\widetilde M_n < |h| < \epsilon \sqrt{n}} \sup_{\eta\in\cH_n}
    \bigg|\frac{h^T}{\sqrt{n}}
    \Big(\score^{(n)}_{\theta_n(h),\eta}\Big)^o\bigg| \cdot |h|^{-2}.
\ee
By asymptotic tightness of \eqref{eq:score_process_reg}, 
\[
  \sup_{\widetilde M_n < |h| < \epsilon \sqrt{n}} \sup_{\eta\in\cH_n}
    \bigg|\frac{1}{\sqrt{n}}
    \Big(\score^{(n)}_{\theta_n(h),\eta}\Big)^o\bigg| = O_{P_0}(1),
\]
so \eqref{eq:varying_score} is of order $o_{P_0}(1)$.

Next, we show that for every $C_1 > 0$, there exists a $C_2 > 0$ such that,
\begin{equation}
\label{eq:sqrtn_lower}
  P_0^{(n)} \bigg( \bigg\{\inf_{\eta\in\cH_n} \int_\Theta 
  \frac{p^{(n)}_{\theta, \eta}}{p^{(n)}_{\theta_0, \eta}}d\Pi_\Theta(\theta) 
  \geq C_2 \left(\frac{M_n}{\sqrt{n}}\right)^p e^{-C_1 M_n^2}\bigg\} \bigg)
  \rightarrow 1.
\end{equation}
Let,
\[
  \Phi_n(h,\eta) = \ell^{(n)}_{\theta_n(h),\eta} - \ell^{(n)}_{\theta_0,\eta}
    = \sum_{i=1}^5 A_{n,i}(h,\eta),
\]
where,
\bean
  A_{n,1}(h,\eta) &=& \left( \ell^{(n)}_{\theta_n(h),\eta} 
    - \ell^{(n)}_{\theta_0,\eta} - \frac{h^T}{\sqrt{n}}
      \score_{\theta_0,\eta}^{(n)} \right)^o,\\
  A_{n,2}(h,\eta) &=& \frac{1}{2}h^T (V_{n,\eta_0} - V_{n,\eta})h,\\
  A_{n,3}(h,\eta) &=& \frac{h^T}{\sqrt{n}}
    \left(\score_{\theta_0,\eta}^{(n)} - P_0^{(n)}
    \score_{\theta_0,\eta}^{(n)}\right),\\
  A_{n,4}(h,\eta) &=& - \frac{1}{2} h^T V_{n,\eta_0} h,\\
  A_{n,5}(h,\eta) &=& P_0^{(n)} \Big( \ell^{(n)}_{\theta_n(h),\eta}
    - \ell^{(n)}_{\theta_0,\eta}\Big) + \frac{1}{2} h^T V_{n,\eta} h.
\eean
Note that $\int \exp(\Phi_n(h,\eta)) d\Pi_n(h) \geq
\int_{|h| \leq C_1 M_n}\exp(\Phi_n(h,\eta)) d\Pi_n(h)$, where
$\Pi_n$ is the prior for the centred and rescaled parameter
$h = \sqrt{n}(\theta-\theta_0)$. For $h$ and $\eta\in\cH_n$ with
$|h| \leq C_1 M_n$, the suprema of $|A_{n,1}(h,\eta)|$ and
$|A_{n,2}(h,\eta)|$ are of order $o_{P_0}(M_n^2)$ by
\eqref{eq:lr_apprx_small_condition} and \eqref{eq:V_conti_condition},
respectively. The supremum of $|A_{n,3}(h,\eta)|$ is of the same order
by asymptotic tightness of \eqref{eq:score_process_reg}.
The quantity $|A_{n,4}(h,\eta)|$ is uniformly bounded by
$C_1^2 M_n^2 \|V_{n,\eta_0}\|/2$ and the supremum of
$|A_{n,5}(h,\eta)|$ is of order $o(M_n^2)$ by \eqref{eq:unif_quad_reg}.
Therefore, for $|h| \leq C_1 M_n$ and
$\eta\in\cH_n$, $\Phi_n(h,\eta)$ is uniformly bounded below by,
\[
  M_n^2 \Big(-\frac{C_1^2}{2} \cdot\|V_{n,\eta_0}\| + o_{P_0}(1)\Big).
\]
Thus,
\be
  \begin{split} 
  \int_\Theta
  \frac{p^{(n)}_{\theta, \eta}}{p^{(n)}_{\theta_0, \eta}}&d\Pi_\Theta(\theta)
  \geq \int_{|h| \leq C_1 M_n} \exp(\Phi_n(h,\eta)) d\Pi_n(h)\\ 
  &\geq \int_{\sqrt{n}|\theta-\theta_0| \leq C_1 M_n} 
    \exp\bigg[M_n^2 \Big(-\frac{C_1^2}{2} \cdot\|V_{n,\eta_0}\| 
    + o_{P_0}(1)\Big)\bigg] d\Pi_\Theta(\theta).
  \end{split}
\ee
Also, the thickness of $\Pi_\Theta$ at $\theta_0$ implies that,
\[
  \Pi_\Theta\{\theta: \sqrt{n}|\theta-\theta_0| \leq C_1 M_n\}
  \geq C_2 (M_n/\sqrt{n})^p,
\] 
for some $C_2>0$.
Since $\limsup_n\rho_{\max}(V_{n,\eta_0}) < \infty$ by
\eqref{eq:V_positive}, and $C_1 > 0$ is arbitrary, we conclude that
\eqref{eq:sqrtn_lower} holds.

Finally, we prove that there exist $C > 0$ and $\epsilon > 0$ such that,
\begin{equation}
\label{eq:sqrtn_upper}
  P_0^{(n)} \bigg( \sup_{M_n < |h| < \epsilon \sqrt{n}}\sup_{\eta\in\cH_n}
    \frac{p^{(n)}_{\theta_n(h), \eta}}{p^{(n)}_{\theta_0, \eta}} e^{C |h|^2}
    \leq 1 \bigg) \rightarrow 1.
\end{equation}
For given $\delta > 0$, by \eqref{eq:unif_quad_reg}, there exists an
$\epsilon > 0$ such that
\be\label{eq:quad_delta}
  \sup_{\eta\in\cH^N} \left|P_0^{(n)} \Big(
    \ell^{(n)}_{\theta_n(h),\eta}
      - \ell^{(n)}_{\theta_0,\eta}\Big)
    + \frac{1}{2} h^T V_{n,\eta} h\right| < \delta \cdot |h|^2,
\ee
for every $h$ with $|h| < \sqrt{n}\epsilon$. 
Write,
\be\label{eq:expand}
  \log\frac{p^{(n)}_{\theta_n(h), \eta}}{p^{(n)}_{\theta_0, \eta}}
  = \sum_{i=1}^4 B_{n,i}(h,\eta),
\ee
where,
\bean
  B_{n,1}(h,\eta) &=& \left( \ell^{(n)}_{\theta_n(h),\eta}
    - \ell^{(n)}_{\theta_0,\eta} \right)^o,\\
  B_{n,2}(h,\eta) &=& P_0^{(n)} \Big( \ell^{(n)}_{\theta_n(h),\eta}
    - \ell^{(n)}_{\theta_0,\eta}\Big) + \frac{1}{2}h^TV_{n,\eta}h,\\
  B_{n,3}(h,\eta) &=& \frac{1}{2}h^T (V_{n,\eta_0} - V_{n,\eta})h,\\
  B_{n,4}(h,\eta) &=& - \frac{1}{2} h^T V_{n,\eta_0} h.
\eean
For $M_n < |h| < \epsilon\sqrt{n}$ and $\eta\in\cH_n$,
$|B_{n,1}(h,\eta)|$ and $|B_{n,3}(h,\eta)|$ are bounded by
$|h|^2 \times o_{P_0}(1)$ by \eqref{eq:lr_apprx_large_condition}
and \eqref{eq:V_conti_condition}, respectively, where the $o_{P_0}(1)$
term does not depend on $h$ and $\eta$. Furthermore,
$|B_{n,2}(h,\eta)| \leq \delta |h|^2$ by \eqref{eq:quad_delta}, and
$B_{n,4}(h,\eta) \leq -\rho_{\min}(V_{n,\eta_0})|h|^2/2$.
Thus, \eqref{eq:expand} is bounded above by,
\[
  |h|^2 \cdot \Big( -\frac{1}{2} \rho_{\min}(V_{n,\eta_0})
  + \delta + o_{P_0}(1) \Big),
\]
for every $h$ with $|h| < \sqrt{n}\epsilon$ and $\eta\in\cH_n$.
Since $\delta > 0$ can be arbitrarily small and
$\liminf_n\rho_{\min}(V_{n,\eta_0}) > 0$ by
\eqref{eq:V_positive}, we conclude that \eqref{eq:sqrtn_upper}
holds for $C < \liminf_n\rho_{\min}(V_{n,\eta_0})/2$.
\qed

\subsubsection{Proof of \eqref{eq:symm_bd}}\label{subsubsec:5}

For the first inequality of \eqref{eq:symm_bd}, note that,
\[
  |\sqrt{a_1+a_2} - \sqrt{b_1 + b_2}|
  \leq |\sqrt{a_1} - \sqrt{b_1}| + |\sqrt{a_2} - \sqrt{b_2}|,
\]
and $(a_1+b_1)^2 \leq 2(a_1^2 + b_1^2)$ for any $a_1, a_2, b_1, b_2\geq0$.
Thus,
\bean
  h^2(\bar p, \bar q) &=& \int
  \bigg( \sqrt{\frac{p+p^-}{2}} - \sqrt{\frac{q+q^-}{2}} \bigg)^2 d\mu\\
  &\leq& \int (\sqrt{p} - \sqrt{q})^2 + (\sqrt{p^-} - \sqrt{q^-})^2 d\mu
    = 2 h^2(p,q),
\eean
and so $h(\bar p, \bar q) \leq \sqrt 2 h(p,q)$ for any two densities $p$ and
$q$ supported on $\DD$. 

For the second and third inequalities of \eqref{eq:symm_bd}, we may
assume that $p$ is symmetric. Then,
\bean
  K(\bar p,\bar q) &=& K(p,\bar q) 
  = \int \bigg(\log p - \log\Big(\frac{q+q^-}{2}\Big)\bigg) dP
  \\
  &\leq& \int \bigg( \log p - \frac{1}{2}\Big\{\log \frac{q}{2}
  + \log \frac{q^-}{2} \Big\}\bigg) dP
  = \int \frac{1}{2}\Big(\log\frac{p}{q} + \log\frac{p}{q^-}\Big) dP,
\eean
where the inequality holds by the concavity of $x \mapsto \log(x)$. 
Also, the symmetry of $p$ implies that
$\int \log(p/q^-) dP = \int \log(p/q) dP$ and so 
$K(\bar p, \bar q) \leq K(p,q)$.
In addition,
\[
  V(\bar p, \bar q) = V(p, \bar q)
  \leq \int \bigg(\log p - \log\Big(\frac{q + q^-}{2}\Big)\bigg)^2 dP
  = \int \bigg(\log\frac{2p}{q+q^-}\bigg)^2 dP.
\]
Since $p/q \wedge p/q^- \leq 2p/(q+q^-) \leq p/q\vee p/q^-$,
we have,
\[
  \bigg|\log\frac{2p}{q+q^-}\bigg|
  \leq \bigg|\log\frac{p}{q} \bigg| + \bigg|\log\frac{p}{q^-} \bigg|,
\]
and so $V(\bar p, \bar q) \leq 4 \int \big(\log(p/q)\big)^2 dP
= 4\big( V(p,q) + K^2 (p,q)\big)$.
\qed

\subsubsection{Proof of \eqref{eq:prod_bd}}\label{subsubsec:6}

Assume that $\epsilon > 0$ is sufficiently small and
$|\theta_1-\theta_2|\vee |\theta-\theta_0| < \epsilon$.
Using \eqref{eq:lipschitz_bd_lem} and the fact that
$(a+b)^2\leq 2(a^2+b^2)$ for all $a,b\in\RR$, we have
the second and third inequalities of \eqref{eq:prod_bd} because,
\bean
  K(p_{\theta_0,\eta_0,i}, p_{\theta,\eta,i})
  &=& \int (\ell_{\theta_0,\eta_0,i} - \ell_{\theta_0,\eta,i})
  + (\ell_{\theta_0,\eta,i} - \ell_{\theta,\eta,i}) dP_{\theta_0,\eta_0,i}
  \\
  &\lesssim& K(\eta_0,\eta) + |\theta-\theta_0|,
\eean
and,
\bean
  V(p_{\theta_0,\eta_0,i}, p_{\theta,\eta,i})
  &\leq& 2\int (\ell_{\theta_0,\eta_0,i} - \ell_{\theta_0,\eta,i})^2
    + (\ell_{\theta_0,\eta,i}
    - \ell_{\theta,\eta,i})^2 dP_{\theta_0,\eta_0,i}\\
  &\lesssim& V(\eta_0,\eta) + K^2(\eta_0,\eta)
    + |\theta-\theta_0|^2,
\eean
for every $\eta\in\cH_0$.

For the first inequality of \eqref{eq:prod_bd},
\bean
  &&h(p_{\theta_1,\eta_1,i}, p_{\theta_2,\eta_2,i})
  \leq h(p_{\theta_1,\eta_1,i}, p_{\theta_1,\eta_2,i})
    + h(p_{\theta_1,\eta_2,i}, p_{\theta_2,\eta_2,i})
    \\
  && ~~ = h(\eta_1,\eta_2)
    + h(p_{\theta_1,\eta_2,i}, p_{\theta_2,\eta_2,i})
  \lesssim h(\eta_1,\eta_2) + |\theta_1-\theta_2|,
\eean
for every $\eta_1,\eta_2 \in \cH_0$, where the last
inequality holds because, with $\dot\eta(x) = d\eta(x)/dx$ and $y_i
= |(\theta_1-\theta_2)^T Z_i|$,
\bean
	h^2(p_{\theta_1,\eta,i}, p_{\theta_2,\eta,i})
	&=& \int \Big(\sqrt{\eta(x+y_i)} - \sqrt{\eta(x)}\Big)^2 dx
	\\	
	&=& y_i^2 \int \bigg(\int_0^1 \frac{\dot\eta(x+ty_i)}
	{\sqrt{\eta(x+ty_i)}}dt\bigg)^2 dx 
	\\
	&\leq& y_i^2 \int \int_0^1 \bigg(\frac{\dot\eta(x+ty_i)}
	{\eta(x+ty_i)}\bigg)^2 \eta(x+ty_i) \;dt\,dx 
	\leq y_i^2 P_{\eta} \widetilde Q^2,
\eean
for every $\eta\in\cH_0$.
\qed

\subsubsection{Proof of the asymptotic tightness of%
  \eqref{eq:score_process2}}
\label{subsubsec:7}

Without loss of generality we may assume that $\theta_0$ is equal
to the zero vector. For given $a \in \RR^p$, let,
\bean
  Z_{ni}(\theta,\eta) = a^T\score_{\theta,\eta,i}/\sqrt{n}, ~~~~~
  S_{ni} = \sup_{\theta \in B_\epsilon}\sup_{\eta\in\cH_0}
    |Z_{ni}(\theta,\eta)|,
\eean
and $\cF = B_\epsilon\times\cH_0$. Let $N^n_{[]} (\delta,\cF)$ be the
minimal number of sets $N$ in a partition
$\{\cF_j: 1\leq j \leq N\}$ of $\cF$ such that,
\be\label{eq:bracket_def}
  \sum_{i=1}^n P_0 \sup_{\substack{(\theta_1,\eta_1) \in \cF_j \\
  (\theta_2,\eta_2) \in \cF_j}} \Big|Z_{ni}(\theta_1,\eta_1) - Z_{ni}
    (\theta_2,\eta_2) \Big|^2 \leq \delta^2,
\ee
for every $j\leq N$. The bracketing central limit theorem
(Theorem~2.11.9 of \cite{van1996weak}) assures that if ,
\be
\label{eq:bracket_clt}
  \begin{split}
  &\sum_{i=1}^n P_0\big( S_{ni} 1_{\{S_{ni} > \gamma\}}\big) = o(1)
    \quad\text{for every $\gamma > 0$},\\
  &\int_0^{\delta_n}\sqrt{\log N^n_{[]} (\delta,\cF)}\,d\delta < \infty,
    \quad\text{for every $\delta_n \downarrow 0$},
  \end{split}
\ee
then \eqref{eq:score_process2} is asymptotically tight.

Since $|Z_i|$'s are bounded and the mean probability $H$ of the
Dirichlet process is compactly supported, there exist functions
$Q_j$ for $j=1,2$, such that $Q_j(x) = C_j(1 + |x|^j)$ for some
constants $C_j > 0$, and,
\be
  |\ell_\eta(x+y) - \ell_\eta(x)| \leq |y| \cdot Q_1(x),\quad
  |s_\eta(x+y) - s_\eta(x)| \leq |y| \cdot Q_2(x),
\ee
for every $\eta\in\cH_0$, $x$ and $y$ with $|y| \leq L\epsilon$
(see Lemma~3.2.3 of \cite{chae2015semiparametric} for details).
Thus $\sqrt{n} |Z_{ni}(\theta,\eta)| \lesssim Q_1(X_i)$ for every
$i\leq n$ and $(\theta,\eta) \in \cF$. Since $Q_1$ is
$P_{\eta_0}$-square-integrable,
\be
  \sum_{i=1}^n P_0\big( S_{ni} 1_{\{S_{ni} > \gamma\}}\big)
  \leq \sqrt{n} P_{\eta_0}\big( Q_1 1_{\{Q_1 > \sqrt{n} \gamma\}}\big)
  \leq \gamma^{-1} P_{\eta_0} Q_1^2 1_{\{Q_1 > \sqrt{n} \gamma\}} = o(1),
\ee
for every $\gamma > 0$, so the first condition of
\eqref{eq:bracket_clt} is satisfied.

Note that,
\begin{equation}
  \begin{split} \label{eq:z_ni_tri}
  |Z_{ni}(\theta_1,\eta_1) -& Z_{ni}(\theta_2,\eta_2)|\\
  & \leq |Z_{ni}(\theta_1,\eta_1) - Z_{ni}(\theta_2,\eta_1)|
      + |Z_{ni}(\theta_2,\eta_1) - Z_{ni}(\theta_2,\eta_2)|.
  \end{split}
\end{equation}
The first term of the right-hand side of \eqref{eq:z_ni_tri} is
bounded by,
\be\label{eq:zni_theta}
  \sup_{\eta\in\cH_0}|Z_{ni}(\theta_1,\eta) - Z_{ni}(\theta_2,\eta)|
  \lesssim \sup_{\eta \in \cH_0} \frac{1}{\sqrt{n}} 
    \big| \score_{\theta_1,\eta,i} - \score_{\theta_2,\eta,i} \big| 
  \lesssim \frac{|\theta_1-\theta_2|}{\sqrt{n}} Q_2(X_i).
\ee
For every $y$ with $|y| \leq L\epsilon$, let
$\cS_y = \{x \mapsto s_\eta(x-y): \eta\in\cH_0\}$.
Since the first and second derivatives of $x\mapsto s_\eta(x-y)$ are
of order $O(x^2)$ and $O(x^3)$, (uniformly in $|y|\leq L\epsilon$ and
$\eta\in\cH_0$) and $\eta_0(x) = O(e^{-Cx^2})$ for some $C > 0$ as
$|x| \rightarrow \infty$, we have
$\sup_{|y| \leq L\epsilon} \log N_{[]}(\delta,\cS_y, L_2(P_{\eta_0}))
\lesssim \delta^{-1/2}$ for every small enough $\delta > 0$ by
Corollary~2.7.4 of \cite{van1996weak} with $\alpha=r=2$, $d=1$,
$V=1/2$ and a partition $\RR = \cup_{j=-\infty}^\infty [j-1,j)$.
Assume that some sufficiently small $\delta > 0$ is given and we
choose a sequence $(y_j)_{j=0}^{N_\delta}$ such that
$-\epsilon L = y_0 < y_1 < \cdots < y_{N_\delta} = \epsilon L$
and $y_{j+1} - y_j < \delta$. Since $N_\delta \lesssim \delta^{-1}$
and $\log N_{[]}(\delta^{3/2},\cS_y, L_2(P_{\eta_0})) \lesssim
\delta^{-3/4}$, we can construct a partition
$\{\cH_l: 1\leq l\leq \overline N_\delta\}$ of $\cH_0$ by taking
all intersections of sets in $N_\delta+1$ partitions, so that
$\log\overline N_\delta \leq N_\delta \cdot
\log N_{[]}(\delta^{3/2},\cS_y, L_2(P_{\eta_0})) \lesssim
\delta^{-7/4}$ and,
\[
  \int \sup_{\eta_1,\eta_2\in\cH_l} |s_{\eta_1}(x-y_j)
    - s_{\eta_2}(x-y_j)|^2 dP_{\eta_0}(x) \leq \delta^3,
\]
for every $l$ and $j$. Applying Lemma~2.2.2 of \cite{van1996weak}
with $\psi(x) = x^2$, we have,
\be\label{eq:s_eta_y}
  \int \max_{1\leq j \leq N_\delta} \sup_{\eta_1,\eta_2\in\cH_l}
  |s_{\eta_1}(x-y_j) - s_{\eta_2}(x-y_j)|^2 dP_{\eta_0}(x)
  \lesssim \delta^2,
\ee
for every $l$.

Now, consider the second term of the right-hand side of
\eqref{eq:z_ni_tri}. For every $\theta\in B_\epsilon$ and $i\geq 1$,
we can choose $j$ such that $|\theta^T Z_i - y_j| \leq \delta$.
Then,
\bean
  \big(Z_{ni}(\theta,\eta_1) - Z_{ni}(\theta,\eta_2)\big)^2
  &\lesssim& \frac{1}{n} \big|\score_{\theta,\eta_1,i}
    - \score_{\theta,\eta_2,i}\big|^2\\
  &\lesssim& \frac{1}{n} |s_{\eta_1}(X_i - \theta^T Z_i)
    - s_{\eta_2}(X_i - \theta^T Z_i)|^2\\
  &\lesssim& \frac{\delta^2}{n} Q^2_2(X_i) +
    \frac{1}{n}|s_{\eta_1}(X_i - y_j) - s_{\eta_2}(X_i - y_j)|^2,
\eean
so we have,
\be\label{eq:zni_eta}
  P_0 \bigg( \sum_{i=1}^n \sup_{\theta\in B_\epsilon}
    \sup_{\eta_1,\eta_2\in\cH_l} \big(Z_{ni}(\theta,\eta_1)
    - Z_{ni}(\theta,\eta_2)\big)^2 \bigg) \lesssim \delta^2,
\ee
for every $l$.

Finally, the two bounds \eqref{eq:zni_theta} and \eqref{eq:zni_eta}
combined with \eqref{eq:z_ni_tri}, imply that,
\[
  \sum_{i=1}^n P_0 \sup_{\substack{|\theta_1-\theta_2| \leq \delta \\
  \eta_1,\eta_2 \in \cH_l}} \Big|Z_{ni}(\theta_1,\eta_1)
    - Z_{ni}(\theta_2,\eta_2) \Big|^2 \lesssim \delta^2,
\]
for every $l$. Since $N(\delta, B_\epsilon, |\cdot|) \lesssim
\delta^{-p}$, a partition satisfying \eqref{eq:bracket_def} can be
constructed by product sets of each partition of $B_\epsilon$ and
$\cH_0$, the order of which is bounded as (for some constant $K > 0$),
\be \label{eq:bracketing_dpm}
  \log N^n_{[]}(\delta,\cF) \lesssim
  \log \overline N_{K\delta} + \log \delta^{-p} \lesssim \delta^{-7/4},
\ee
so the second condition of \eqref{eq:bracket_clt} is satisfied.
\qed

\subsubsection{Proof of asymptotic tightness in Corollary%
  \ref{cor:ex-series}}
\label{subsubsec:8}

We follow the steps of the proof of asymptotic tightness in
Corollary~\ref{cor:ex-dpm}. Without loss of generality we assume that
$\theta_0=0$, and define $Z_{ni}(\theta,\eta)$, $S_{ni}$,
$\cF$ and $N_{[]}^n(\delta,\cF)$ as in the proof of
Corollary~\ref{cor:ex-dpm}. The first condition of
\eqref{eq:bracket_clt} is proved by replacing $Q_j$'s as constant
functions. Inequalities \eqref{eq:z_ni_tri} and \eqref{eq:zni_theta} 
are shown to hold in the same way.

Let $\cS = \{x\mapsto s_\eta(x) : \eta\in\cH_0\}$.
Applying Theorem~2.7.1 of \cite{van1996weak} with $\alpha=d=1$, we
have $\log N(\delta, \cS, \|\cdot\|_\infty) \lesssim \delta^{-1}$.
This implies that there exists a partition
$\{\cH_l: 1 \leq l \leq \overline N_\delta\}$ of $\cH_0$ such
that $\overline N_\delta \lesssim \delta^{-1}$ and,
\[
  \sup_{\eta_1,\eta_2 \in \cH_l} \sup_{x\in\mathbb{D}}
  |s_{\eta_1}(x) - s_{\eta_2}(x)| < \delta,
\]
for every $l$. Thus, \eqref{eq:zni_eta} holds.
Replacing the entropy bound \eqref{eq:bracketing_dpm} by,
\[
  \log N^n_{[]}(\delta,\cF) \lesssim \log \overline N_{K\delta}
    + \log \delta^{-p} \lesssim \delta^{-1},
\]
we follow the remainder of the proof of Corollary~\ref{cor:ex-dpm}.
\qed

\subsection{Proofs for Section \ref{sec:lm}}\label{ssec:2}

\subsubsection{Proof of \eqref{eq:unif_quad_lm}}\label{subsubsec:4-1}
Since,
\bean
  && \sup_{\eta\in\cH^N} \left| \frac{1}{n}P_0^{(n)}
    \Big( \ell^{(n)}_{\theta,\eta} - \ell^{(n)}_{\theta_0,\eta}\Big)
      + \frac{1}{2} (\theta-\theta_0)^T V_{n,\eta} (\theta-\theta_0)\right|\\
  && \le \sup_{i,\eta}
    \bigg| P_0 \bigg( \log \frac{\psi_\eta(X_i - Z_i^T \theta| W_i)}
      {\psi_\eta(X_i - Z_i^T \theta_0| W_i)} \bigg)
    + \frac{1}{2} (\theta-\theta_0)^T Z_i 
      v_{\eta}(W_i) Z_i^T (\theta-\theta_0) \bigg|,
\eean
where $i$ runs over the integers and $\eta$ over $\cH^N$,
it suffices to show that,
\begin{equation} \label{eq:uql_multi}
  \sup_w \sup_{\eta \in \cH^N}
    \Big| \int \log \frac{\psi_\eta(x-y|w)}{\psi_\eta(x|w)} 
      d\Psi^w_{\eta_{0}}(x) + \frac{1}{2} y^T v_{\eta}(w) y \Big|
  = o(|y|^2),
\end{equation}
as $|y| \rightarrow 0$.

Let $A=\{x=(x_1,\ldots,x_m):x_1>0\}$ and $A^-=\{x:-x\in A\}$.
Note that,
\[
  \psi_\eta(x|w) = \psi_\eta(-x|w) = \psi_\eta(x|-w) = \psi_\eta(-x|-w),
\]
by the symmetry of $f$ and $G$.
Thus, for $\eta\in\cH^N$,
\be \begin{split} \label{eq:eq123} 
  & \int \log \frac{\psi_\eta(x-y|w)}{\psi_\eta(x|w)} d\Psi^w_{\eta_{0}}(x)
    =\int \log \frac{\psi_\eta(x-y|w)}{\psi_\eta(x|w)}
      \psi_{\eta_{0}}(x|w) d\mu(x)\\
  & = \int_{A^-} \log \frac{\psi_\eta(x-y/2|w)}{\psi_\eta(x+y/2|w)}
      \psi_{\eta_0}(x+y/2|w) d\mu(x)\\
  & \qquad+ \int_{A} \log \frac{\psi_\eta(x-y/2|w)}{\psi_\eta(x+y/2|w)}
      \psi_{\eta_0}(x+y/2|w) d\mu(x)\\
  & = \int_A \log \frac{\psi_\eta(-x-y/2|w)}{\psi_\eta(-x+y/2|w)}
      \psi_{\eta_0}(-x+y/2|w) d\mu(x)\\
  & \qquad + \int_A \log \frac{\psi_\eta(x-y/2|w)}{\psi_\eta(x+y/2|w)}
      \psi_{\eta_0}(x+y/2|w) d\mu(x)\\
  & = -\int_A \left[ \ell_{\eta}\Big(x - \frac{y}{2} \Big|w\Big) - 
      \ell_{\eta} \Big(x + \frac{y}{2} \Big|w\Big) \right]\\
  & \qquad \qquad \times \left[ {\psi_{\eta_0}}\Big(x-\frac{y}{2}\Big|w \Big) 
	- {\psi_{\eta_0}}\Big(x+\frac{y}{2} \Big|w \Big) \right] d\mu(x).
\end{split} \ee
The last integral of \eqref{eq:eq123} is equal to,
\[
  \begin{split}
  - \int_0^1\int_0^1\int_A y^T s_{\eta}(x &+ r(y,t)|w)
    s_{\eta_0}^T (x+ r(y,s)|w)y\\
  &\times\psi_{\eta_0}(x+r(y,s)|w)\,d\mu(x) dt ds,
  \end{split}
\]
by Taylor expansion, where $r(y,t) = (t-1/2) y$.
Since, 
\[
  v_{\eta}(w) 
  =2\int_A s_\eta(x|w)\; s_{\eta_0}^T(x|w) d\Psi_{\eta_0}^w(x),
\]
the left-hand side of \eqref{eq:uql_multi}, for fixed $w$ and $\eta$,
is equal to,
\[
\begin{split}
  - y^T \bigg\{\int_0^1\int_0^1 \int_A
    & \Big[ s_{\eta}(x+ r(y,t)|w) s_{\eta_0}^T (x+ r(y,s)|w)
      \psi_{\eta_0}(x+ r(y,s)|w)\\
    & - s_{\eta}(x|w) s_{\eta_0}^T (x|w) \psi_{\eta_0}(x|w) \Big] \;
      d\mu(x) dt\, ds \bigg\}y.
\end{split}
\]
The integrand of the last display is equal to
$A_\eta(x,y,w) + B_\eta(x,y,w)$, where,
\[
\begin{split}
  A_\eta(x,y,w)& = s_{\eta}(x+ r(y,t)|w)\\
  & \times \Big\{ s_{\eta_0}^T (x+ r(y,s)|w) \psi_{\eta_0}(x+r(y,s)|w)
    - s_{\eta_0}^T (x|w) \psi_{\eta_0}(x|w) \Big\},
\end{split}
\]
and,
\[
  B_\eta(x,y,w) = \Big\{s_{\eta}(x+ r(y,t)|w) - s_{\eta}(x|w)\Big\}
    s_{\eta_0}^T (x|w) \psi_{\eta_0}(x|w),
\]
(dependence on $t$ and $s$ is abbreviated for simplicity).
Let $g_{\eta,j}(x|w) = \partial\ell_\eta(x|w) / \partial x_j$ and
$e_j$
be the $j$th unit vector in $\RR^m$.
By \eqref{eq:Lipsc_cond_lm}, it is easy to prove that,
\[
\sup_w \sup_{\eta\in\cH^N} \sup_{t,s\in[0,1]} \int_A
  |e_i^T B_\eta(x,y,w) e_j| d\mu(x) = o(1),
\]
as $|y| \rightarrow 0$. Also, by \eqref{eq:Lipsc_cond_lm},
\[
  \bigg|e_i^T \bigg[\frac{\partial (s_{\eta_0}
    \psi_{\eta_0})}{\partial x} (x|w)
     \bigg] e_j \bigg|
  \leq (Q+ Q^2)(x,w) \psi_{\eta_0}(x|w), 
\]
for every $i,j \leq m$.
Thus, $|e_i^T A_\eta(x,y,w) e_j|$ is bounded by,
\[
\begin{split}
  & |y| \cdot |g_{\eta,i}(x+r(y,t)|w)|
    \cdot\int_0^1 (Q + Q^2) (x+r(y,s)u, w) \psi_{\eta_0}(x+r(y,s)u|w) du\\
  &\leq |y| (1+|y|) \cdot\int_0^1 \Big\{ (Q^2+Q^3)(x+r(y,s)u, w)
    \psi_{\eta_0}(x+r(y,s)u|w) \Big\}du,
\end{split}
\]
where the inequality in the second line holds because,
\[
\begin{split}
  &|g_{\eta,i}(x+r(y,t)|w)|\\
  &\leq |g_{\eta,i}(x+r(y,t)|w) - g_{\eta,i}(x+r(y,s)u|w)|
    + |g_{\eta,i}(x+r(y,s)u|w)|\\
  &\leq (1+|y|) Q(x+r(y,s)u,w).
\end{split}
\]
Therefore,
\[
\begin{split}
  \sup_{s,t\in[0,1]} \sup_{\eta\in\cH^N} \int_A 
    | e_i^T A_\eta(x,y,w) e_j| d\mu(x)dt ds\\
  \leq |y| (1+|y|)\int (Q^2+Q^3)(x,w) d\Psi^w_{\eta_0}(x),
\end{split}\]
which is $o(1)$, uniformly in $w$, as $|y| \rightarrow 0$.
\qed

\subsubsection{Proof of \eqref{eq:d_w_consistency}}
\label{subsubsec:4-2}

To prove \eqref{eq:d_w_consistency}, it suffices to show,
\bean
  \lim_{n\rightarrow\infty} \sup_{\eta\in\cH_n} d_{W_i}(\eta,\eta_0) = 0,
\eean
for every $i\geq 1$ because $W_i$ is contained in a compact set,
\eqref{eq:design} holds, and \eqref{eq:w_class} is uniformly
equicontinuous (note that equicontinuity on a compact domain is
equivalent to uniform equicontinuity).
For given $i\geq 1$, since $\sup_{\eta\in\cH^N} d_{W_i}(\eta,\eta_0) <
\infty$ by \eqref{eq:Lipsc_cond_lm}, we can choose $\eta_n\in\cH_n$,
for large enough $n$ such that,
\[
  \sup_{\eta\in\cH_n} d_{W_i}(\eta,\eta_0) < d_{W_i}(\eta_n,\eta_0) + n^{-1}.
\]
Note that $h_n(\eta_n, \eta_0) \rightarrow 0$ by the definition of $\cH_n$.
Since,
\[
  h^2_n(\eta_n,\eta_0) = \frac{1}{n} \sum_{j=1}^n
  h^2(\psi_{\eta_n}(\cdot|W_j), \psi_{\eta_0}(\cdot|W_j)),
\]
$W_j$ is contained in a compact set, \eqref{eq:design} holds, and
\eqref{eq:w_class} is uniformly equicontinuous, we have
$\lim_{n\rightarrow\infty} h(\psi_{\eta_n}(\cdot|W_j),
\psi_{\eta_0}(\cdot|W_j)) = 0$ for every $j \geq 1$.
Thus, it suffices to show that $d_{W_i}(\eta_n,\eta_0) \rightarrow 0$.
For simplicity, we write $W_i=w$ in the remainder of this proof.

We first prove that $\lim_{n\rightarrow\infty} \ell_{\eta_n}(x|w) =
\ell_{\eta_0}(x|w)$ for every $x$. Suppose  $\ell_{\eta_n}(x|w)
\nrightarrow \ell_{\eta_0}(x|w)$ for some $x$. Then we can choose
an $\epsilon > 0$ and a subsequence $m(n)$ such that $m(n) \geq N$
and $|\ell_{\eta_{m(n)}}(x|w) - \ell_{\eta_0}(x|w)| > \epsilon$ for every $n$.
Note that $x\mapsto\ell_\eta(x|w)$ is continuously differentiable and
its derivative is bounded componentwise by a continuous function
$x \mapsto Q(x,w)$ uniformly in $\eta\in\cH^N$ by \eqref{eq:Lipsc_cond_lm}.
Thus we can choose a $\delta > 0$ such that
$|\ell_{\eta_{m(n)}}(y|w) - \ell_{\eta_0}(y|w)| > \epsilon/2$ for
every $n\geq 1$ and a $y$ with $|y-x| < \delta$. Note that $\delta > 0$
can be chosen sufficiently small so that
$\psi_{\eta_0}(y|w) > \psi_{\eta_0}(x|w)/2$ for every $y$ with
$|y-x| < \delta$. Since,
\[
  \ell_{\eta_{m(n)}}(y|w) - \ell_{\eta_0}(y|w)
  = 2\log \sqrt{\psi_{\eta_{m(n)}}(y|w)/ \psi_{\eta_0}(y|w)},
\]
there exists a $\bar\epsilon > 0$ such that,
\[
  \left|1-\sqrt{\psi_{\eta_{m(n)}}
    (y|w)/\psi_{\eta_0}(y|w)}\right|
  > \bar\epsilon,
\]
for every $n\geq 1$ and $y$ with $|y-x| < \delta$. Since,
\bean
  h^2(\psi_{\eta_{m(n)}}(\cdot|w), \psi_{\eta_0}(\cdot|w)) \
  &\geq& \int_{\{y:|y-x| < \delta\}} \bigg( 1 - 
    \sqrt{\frac{\psi_{\eta_{m(n)}}}{\psi_{\eta_0}}(y|w)} \bigg)^2
    d\Psi^w_{\eta_0}(y)\\
  &\geq& \bar\epsilon^2 \int_{\{y:|y-x| < \delta\}}
    \frac{\psi_{\eta_0}(x|w)}{2}\; dy \geq \gamma,
\eean
for some $\gamma > 0$ and every $n\geq 1$, the above contradicts the
fact that $h(\psi_{\eta_n}, \psi_{\eta_0}) \rightarrow 0$, so 
we conlude that $\ell_{\eta_n}(x|w) \rightarrow \ell_{\eta_0}(x|w)$
for all $x$.

Let $e_j$ be the $j$th unit vector in $\RR^m$ and
$g_{\eta,j}(x|w) = \partial\ell_\eta(x|w) / \partial x_j$.
Then as $y\rightarrow 0$ in $\RR$,
\[
\begin{split}
  \sup_{\eta\in\cH^N} &\bigg| \int 
    \bigg(\frac{\ell_{\eta}(x+y e_j|w) - \ell_{\eta}(x|w)}{y}
      - g_{\eta_0,j}(x|w)\bigg)^2\\
  & \qquad - \Big(g_{\eta,j}(x|w) - g_{\eta_0,j}(x|w)\Big)^2
      d\Psi^w_{\eta_0}(x) \bigg|\\
  & = \sup_{\eta\in\cH^N} \bigg|
    \int \bigg\{\int_0^1 | g_{\eta,j}(x+tye_j | w) - g_{\eta,j}(x|w) |\; dt\\
  & \qquad \times
    \bigg[\frac{\ell_{\eta}(x+y e_j|w) - \ell_{\eta}(x|w)}{y}
      - g_{\eta,j}(x|w) + 2g_{\eta_0,j}(x|w) \bigg] \bigg\}
    d\Psi_{\eta_0}^w(x) \bigg|\\
  & \leq |y| \int Q(x,w) \bigg|\frac{\ell_{\eta}(x+y e_j|w)
    - \ell_{\eta}(x|w)}{y} - g_{\eta,j}(x|w)\\
  & \qquad+ 2g_{\eta_0,j}(x|w)\bigg| d\Psi_{\eta_0}^w(x)\\
  &= o(1),
\end{split}
\]
where the last line holds by \eqref{eq:Lipsc_cond_lm}.
The Moore-Osgood theorem enables the
interchange of the two limits in the following equality:
\be
  \begin{split}\nonumber
  &\lim_{n\rightarrow\infty} \int \Big\{ 
    g_{\eta_n,j}(x|w) - g_{\eta_0,j}(x|w) \Big\}^2 d\Psi_{\eta_0}^w(x)\\
  &= \lim_{n\rightarrow\infty} \lim_{y \rightarrow 0} \int \bigg\{
    \frac{\ell_{\eta_n}(x+ y e_j|w) - \ell_{\eta_0}(x|w)}{y} 
      - g_{\eta_0,j}(x|w) \bigg\}^2 d\Psi_{\eta_0}^w(x)\\
  &= \lim_{y \rightarrow 0} \lim_{n\rightarrow\infty} \int \bigg\{
    \frac{\ell_{\eta_n}(x+ y e_j|w) - \ell_{\eta_0}(x|w)}{y}
      - g_{\eta_0,j}(x|w) \bigg\}^2 d\Psi_{\eta_0}^w(x)\\
  & = \int \Big\{ g_{\eta_0,j}(x|w) - g_{\eta_0,j}(x|w) \Big\}^2 
    d\Psi_{\eta_0}^w(x) = 0.
\end{split}\ee
Conclude that $d_w(\eta_n, \eta_0) = o(1)$.
\qed

\subsubsection{Proof of Lemma \ref{lem:lm_consistency1}}\label{subsubsec:4-3}

Let $\psi_{\eta,j}(x_j|w_j)$ be the marginal density of the $j$th
coordinate, that is $\psi_{\eta,j}(x_j|w_j) = \int f(x_j - b^T w_j) dG(b)$.
Since $G_0$ is thick at 0 and $f_0$ is continuous and positive at 0,
there exists a $\gamma > 0$ such that $\inf_{|x_j| \leq \gamma} 
\inf_{w_j}\psi_{\eta_0,j}(x_j|w_j) > 0$. Thus, as in \eqref{eq:yd3},
there exist constants $\widetilde C > 0$ and $\delta > 0$ such that,
\[
  h^2 (p_{\theta,\eta,i}, p_{\theta_0,\eta_0,i})
  \geq \widetilde C^2\big(\delta \wedge 
  |(\theta-\theta_0)^T Z_{ij}|\big)^2,
\]
for every $j \leq m$. Since $\max_{j\leq m}|(\theta-\theta_0)^T
Z_{ij}| \geq |(\theta-\theta_0)^T Z_i| / \sqrt{m}$,
\[
  h^2 (p_{\theta,\eta,i}, p_{\theta_0,\eta_0,i})
  \geq C^2\big(\delta \wedge 
  |(\theta-\theta_0)^T Z_i|\big)^2,
\]
where $C = \widetilde C / \sqrt{m}$.
Let $\epsilon >0$ be a constant such that 
$\epsilon < aC\delta$, where $a^2 = \liminf_n\rho_{\rm min}(\bZ_n)/(2mL^2)$, 

For a given large enough $n$, fix $\eta\in\cH_n$ with
$h_n\big((\theta,\eta),(\theta_0,\eta_0)\big) < \epsilon$.
Let $\NN_{\delta,n} = \{i \leq n: |(\theta-\theta_0)^TZ_i| \geq \delta\}$
and let $N_{\delta,n}$ denote its cardinality. 
Then, the last display implies,
\bea
  \epsilon^2 \geq h_n^2((\theta,\eta),(\theta_0,\eta_0)) 
  &\geq& \frac{C^2}{n} \sum_{i=1}^n \big(|(\theta-\theta_0)^T Z_i|
    \wedge \delta\big)^2\nonumber\\
  &\geq& \frac{C^2 N_{\delta,n} \delta^2}{n} +
    \frac{C^2}{n}\sum_{i\notin\NN_{\delta,n}}|(\theta-\theta_0)^TZ_i|^2.
\label{eq:yd6}
\eea    
The first term of \eqref{eq:yd6} is greater than
$N_{\delta,n}\epsilon^2/(na^2)$ since $\epsilon< aC\delta$, which
implies $N_{\delta,n}/n< a^2$.
On the other hand, for the second term of \eqref{eq:yd6}, note that,
\[
\sum_{i\notin\NN_{\delta,n}}|(\theta-\theta_0)^TZ_i|^2
\geq \sum_{i=1}^n|(\theta-\theta_0)^TZ_i|^2
- N_{\delta,n} \max_i |(\theta-\theta_0)^TZ_i|^2.
\]
Since $\sum_{i=1}^n|(\theta-\theta_0)^TZ_i|^2 \geq n |\theta-\theta_0|^2
\rho_{\rm min}(\bZ_n)$ and $\max_i |(\theta-\theta_0)^TZ_i|^2
\leq m L^2 |\theta-\theta_0|^2$, we have,
\be
\label{eq:yd7}
\frac{C^2}{n}\sum_{i\notin\NN_{\delta,n}}|(\theta-\theta_0)^TZ_i|^2
\geq C^2 |\theta-\theta_0|^2 \Big( \rho_{\rm min}(\bZ_n)
    - m L^2 \frac{N_{\delta,n}}{n}\Big). 
\ee
Since $N_{\delta,n}/n< a^2$ and $a^2=\liminf_n
\rho_{\rm min}(\bZ_n)/(2mL^2)$, \eqref{eq:yd6} and \eqref{eq:yd7}
together imply $|\theta-\theta_0|^2 \leq K_1 \epsilon^2$, where
$K_1 = 2/\big(C^2 \rho_{\rm min}(\bZ_n)\big)$.

The proof would be complete if we show that $h_n(\eta, \eta_0)
< K \epsilon$ for some constant $K>0$. Note that for every $i$,
\be
\begin{split}\nonumber
  h_n^2(\eta,\eta_0) &= \frac{1}{n}
    \sum_{i=1}^n h^2(p_{\theta,\eta,i}, p_{\theta,\eta_0,i})\\
  & \leq \frac{2}{n} \sum_{i=1}^n \bigl(h^2(p_{\theta,\eta,i},
      p_{\theta_0,\eta_0,i})
    + h^2(p_{\theta_0,\eta_0,i}, p_{\theta,\eta_0,i})\bigr)\\
  &= \frac{2}{n} \sum_{i=1}^n h^2(p_{\theta,\eta_0,i}, p_{\theta_0,\eta_0,i})
    + 2 h_n^2((\theta,\eta), \theta_0,\eta_0))\\
  &\leq \frac{2}{n} \sum_{i=1}^n h^2(p_{\theta,\eta_0,i},
      p_{\theta_0,\eta_0,i})
    + 2\epsilon^2,
\end{split}
\ee
Note also that,
\[
\frac{\partial}{\partial\theta} \sqrt{p_{\theta,\eta,i}(x)}
  = \frac{\frac{\partial}{\partial \theta} p_{\theta,\eta,i}(x)}
    {2 \sqrt{p_{\theta,\eta,i}(x)}}
  = \frac{1}{2} Z_i s_\eta(x-Z_i^T\theta | W_i) \sqrt{p_{\theta,\eta,i}(x)}
\]
Thus, with $\theta(t) = \theta_0 + t(\theta-\theta_0)$, 
\be
\label{eq:h-theta-lm}
\begin{split}
  & h^2(p_{\theta,\eta_0,i}, p_{\theta_0,\eta_0,i}) 
    = \int \Big(\sqrt{p_{\theta,\eta_0,i}(x)} -
    \sqrt{p_{\theta_0,\eta_0,i}(x)} \Big)^2 d\mu(x)\\
  &\leq \frac{1}{4} \int \int_0^1 \Big| (\theta-\theta_0)^T 
    Z_i s_{\eta_0} \big(x-Z_i^T \theta(t)|W_i\big) \Big|^2
    p_{\theta(t),\eta_0,i}(x) \; dt d\mu(x)\\
  &= \frac{1}{4} \int \Big| (\theta-\theta_0)^T Z_i
    s_{\eta_0}(x|W_i) \Big|^2 d\Psi_{\eta_0}^{W_i}(x)\\
  &\leq K_2 |\theta-\theta_0|^2,
\end{split}
\ee
for some $K_2 > 0$ by \eqref{eq:Lipsc_cond_lm}, where the inequality
in the second line of \eqref{eq:h-theta-lm} holds by Jensen's inequality.
Thus, $h_n^2(\eta,\eta_0) \leq 2K_2 |\theta-\theta_0|^2
+ 2\epsilon^2 \leq 2(K_1 K_2+1)\epsilon^2$.
\qed

\subsubsection{Proof of Lemma \ref{lem:lm-consistency2}} \label{subsubsec:4-4}

Let $\eta_1=(f_1,G_1)$ and $\eta_2=(f_2,G_2)$ be elements of $\cH_0$,
$\theta_1,\theta_2\in\Theta$ and let $\eta_{12} = (f_1, G_2)$.
Since \eqref{eq:Q_0-integrability} and \eqref{eq:Q_0-lip} hold, it can
be shown, in a manner similar to \eqref{eq:h-theta-lm}, that,
\[
  \sup_{i\geq 1}\sup_{\eta\in\cH_0} h(p_{\theta_1, \eta, i}, p_{\theta_2, \eta,i})
    \leq C|\theta_1 - \theta_2|,
\]
for some constant $C > 0$.
Recall that $h(P*G, Q*G) \leq h(P, Q)$ for arbitrary probability
measures $P,Q$ and $G$ (where $*$ denotes convolution).
Then,
\[
\begin{split}
  h(p_{\theta_1, \eta_1, i}, &\; p_{\theta_2, \eta_2,i})
  \leq h(p_{\theta_1, \eta_1, i}, p_{\theta_1, \eta_2,i})
    + h(p_{\theta_1, \eta_2, i}, p_{\theta_2, \eta_2,i})\\
  &\leq h\big(\psi_{\eta_1}(\cdot|W_i), \psi_{\eta_2}(\cdot|W_i)\big)
    + C|\theta_1-\theta_2|\\
  &\leq h\big(\psi_{\eta_1}(\cdot|W_i), \psi_{\eta_{12}}(\cdot|W_i)\big)
    + h\big(\psi_{\eta_{12}}(\cdot|W_i), \psi_{\eta_2}(\cdot|W_i)\big)
    + C|\theta_1-\theta_2|\\
  &\leq h\big(\psi_{\eta_1}(\cdot|W_i), \psi_{\eta_{12}}(\cdot|W_i)\big)
    + h(f_1^m, f_2^m) + C|\theta_1-\theta_2|,\\
  &= h\big(\psi_{\eta_1}(\cdot|W_i), \psi_{\eta_{12}}(\cdot|W_i)\big) + o(1),
\end{split}\]
as $h(f_1,f_2) \vee |\theta_1-\theta_2| \rightarrow 0$, where $f^m$ is
the $m$-fold product density of $f$. To prove
\eqref{eq:h_n-consistency-lm}, it now suffices to show that
$h\big(\psi_{\eta_1}(\cdot|W_i), \psi_{\eta_{12}}(\cdot|W_i)\big) =
o(1)$ as $d_W(G_1, G_2) \rightarrow 0$.

By \eqref{eq:cF_0-bound}, there exists a constant $C_1 > 0$ such that,
\[
  \sup_{f\in\cF_0} \sup_{x,w} \bigg| \prod_{j=1}^m f(x_j-b_1^T w_j)
  - \prod_{j=1}^m f(x_j^T - b_2 w_j) \bigg| \leq C_1 |b_1 - b_2|,
\]
for every $b_1, b_2 \in [-M_b, M_b]^q$. So by Theorem 2 of
\cite{gibbs2002choosing} (the equivalence of L\'{e}vy-Prohorov
and Wasserstein metrics),
\be
\label{eq:d_W-bound}
  \sup_{x,w} |\psi_{\eta_1}(x|w) - \psi_{\eta_{12}}(x|w)|
  \leq C_2 d_W(G_1, G_2),
\ee
for some $C_2 >0$ that depends only on $C_1$.
Since $w$ ranges over a compact set, inequality \eqref{eq:d_W-bound}
and uniform tightness of $\cF_0$ imply that,
\[
  \sup_w d_V\big(\psi_{\eta_1}(\cdot|w), \psi_{\eta_{12}}(\cdot|w)\big)
    \rightarrow 0,
\]
as $d_W(G_1, G_2) \rightarrow 0$. Since $h^2 \leq d_V$, this
completes the proof of \eqref{eq:h_n-consistency-lm}.

To prove \eqref{eq:KL-consistency-lm}, write,
\be
  K(p_{\theta_0,\eta_0,i}, p_{\theta,\eta,i}) = K(\ell_{\theta_0,\eta_0,i},
  \ell_{\theta_0,\eta,i}) + P_0(\ell_{\theta_0,\eta,i} - \ell_{\theta,\eta,i}).
\ee
Under conditions \eqref{eq:Q_0-integrability} and 
\eqref{eq:Q_0-lip}, $K(p_{\theta_0,\eta_0,i}, p_{\theta_0,\eta,i})$ 
is bounded by,
\[
  C_3 h^2(p_{\theta_0,\eta_0,i}, p_{\theta_0,\eta,i}) 
  \log\bigg(\frac{1}{h(p_{\theta_0,\eta_0,i}, p_{\theta_0,\eta,i})}\bigg),
\]
for some constant $C_3 > 0$, by Theorem~5 of
\cite{wong1995probability}, which converges to 0 as $h(f, f_0) \vee
d_W(G, G_0) \rightarrow 0$ by \eqref{eq:h_n-consistency-lm}.
Also, by \eqref{eq:Q_0-lip},
\[
  \sup_{\eta\in\cH_0} P_0(\ell_{\theta_0,\eta,i} -
  \ell_{\theta,\eta,i})
  \leq C_4 |\theta-\theta_0|,
\]
for some $C_4 > 0$, and so $K(p_{\theta_0,\eta_0,i},
p_{\theta,\eta,i}) \rightarrow 0$ as $|\theta-\theta_0| \vee h(f, f_0)
\vee d_W(G, G_0) \rightarrow 0$. Similarly, 
\be
 V(p_{\theta_0,\eta_0,i}, p_{\theta,\eta,i}) \leq 
  2P_0(\ell_{\theta_0,\eta_0,i} - \ell_{\theta_0,\eta,i})^2
   + 2P_0(\ell_{\theta_0,\eta,i} -  \ell_{\theta,\eta,i})^2,
\ee
and $P_0(\ell_{\theta_0,\eta_0,i} - \ell_{\theta_0,\eta,i})^2$ is bounded by,
\[
  C_5 h^2(p_{\theta_0,\eta_0,i}, p_{\theta_0,\eta,i})
  \bigg\{
    \log\bigg(\frac{1}{h(p_{\theta_0,\eta_0,i},p_{\theta_0,\eta,i})}\bigg)
  \bigg\}^2,
\]
for some $C_5 >0$ by Theorem~5 of \cite{wong1995probability}.
In addition,
\be
  P_0(\ell_{\theta_0,\eta,i} - \ell_{\theta,\eta,i})^2
  \leq C_6 |\theta-\theta|^2,
\ee
for some $C_6 > 0$. Thus, $V(p_{\theta_0,\eta_0,i}, p_{\theta,\eta,i})
\rightarrow 0$ as $|\theta-\theta_0|$, $h(f, f_0)$ and $d_W(G, G_0)$
go to zero.
\qed

\subsubsection{Proof of  \eqref{eq:density_bound-lm}}\label{subsubsec:4-4.1}
For $x \in \RR^m$ and $w\in \RR^{q\times m}$ note that,
\bean
  \psi_\eta(x|w) &=& \int \prod_{j=1}^m \int 
    \phi_\sigma (x-z-b^T w_j ) dF(z,\sigma) dG(b)\\
  &=& \int \prod_{j=1}^m \int \frac{1}{\sqrt{2\pi}\sigma}
    \exp \bigg(-\frac{(x_j-z-b^T w_j)^2}{2\sigma^2}\bigg)dF(z,\sigma) dG(b)\\
  &\leq& (2\pi\sigma_1^2)^{-m/2} \exp\bigg(-\frac{|x|^2}{2\sigma_2^2}
    + K_1\bigg)\\
  &\leq& C_3 \exp(-C_4 |x|^2),
\eean
for $C_3 = (2\pi\sigma_1^2)^{-m/2}$, $C_4 < 1/(2\sigma_2^2)$ and
large enough $|x|$, where $K_1$ is a constant. In the same way,
\bean
  \psi_\eta(x|w) &\geq& (2\pi\sigma_2^2)^{-m/2}
   \exp\bigg(-\frac{|x|^2}{2\sigma_1^2} + K_2 \bigg)\\
  &\geq& C_1 \exp(-C_2 |x|^2),
\eean
for $C_1 = (2\pi\sigma_2^2)^{-m/2}$, $C_2 > 1/(2\sigma_1^2)$ and
large enough $|x|$, where $K_2$ is a constant.
\qed

\subsubsection{Proof of the equicontinuity of \eqref{eq:w_class}%
   in Corollary \ref{cor:ex-dpm-lm}}\label{subsubsec:4-5}

To prove the equicontinuity of \eqref{eq:w_class},
it is sufficient to show that the partial derivatives of
$w\mapsto d^2_w(\eta_1, \eta_2)$ and 
$w\mapsto h^2(\psi_{\eta_1}(\cdot|w), \psi_{\eta_2}(\cdot|w))$ 
are bounded by a constant uniformly in $\eta_1, \eta_2 \in \cH_0$.
Since every $G$ is compactly supported, partial derivatives of
$w \mapsto s_\eta(x|w)$ and $w\mapsto \psi_{\eta_0}(x|w)$ are
bounded by a constant multiple of partial derivatives of
$x \mapsto s_\eta(x|w)$ and $x\mapsto \psi_{\eta_0}(x|w)$, which are
bounded by $Q(x,w)$ and $Q(x,w) \psi_{\eta_0}(x|w)$, respectively.
Since $s_\eta(x,w)$ is also bounded by $Q(x,w)$ for every
$\eta \in\cH_0$, the partial derivative of,
\[
  w \mapsto d_w^2(\eta_1, \eta_2) = \int |s_{\eta_1}(x|w) - s_{\eta_2}(x|w)|^2
    \psi_{\eta_0}(x|w) d\mu(x),
\]
is bounded by a constant multiple of $\int Q^3(x,w) d\Psi_{\eta_0}^w(x)$.
Note that, 
\[
  h^2\big(\psi_{\eta_1}(\cdot|w), \psi_{\eta_2}(\cdot|w)\big) = 
  2\Big(1 - \int \sqrt{\psi_{\eta_1}(x|w) \psi_{\eta_2}(x|w)} d\mu(x)\Big).
\]
Since,
\[
  \frac{\partial\psi_\eta}{\partial w_j}(x|w) = 
  \int \Big(\dot f(x_j - w_j^T b)\prod_{k\neq j} f(x_k - w_k^T b)\Big)
  \cdot b \,dG(b),
\]
where $\dot f$ is the derivative of $f$, we have,
\begin{equation}
\label{eq:frac_eta}
  \left|\frac{\partial\psi_\eta(x|w)/\partial w_j}{\psi_\eta(x|w)}\right|
  \leq C \sup_b \left|\frac{\dot f(x_j - w_j^Tb)}{f(x_j - w_j^Tb)}\right|,
\end{equation}
for a constant $C > 0$, so the supremum of the left-hand
side of \eqref{eq:frac_eta} is of order $O(|x_j|)$,
as $|x_j| \rightarrow \infty$, where the supremum is taken over
$\eta\in\cH_0$.
Consequently,
\bean
  \left|\frac{\partial}{\partial w_j}
  h^2\big(\psi_{\eta_1}(\cdot|w), \psi_{\eta_2}(\cdot|w)\big)\right|
  &\leq& \left| \int \frac{\partial(\psi_{\eta_1}(x|w) 
  \psi_{\eta_2}(x|w))/\partial w_j}
  {\sqrt{\psi_{\eta_1}(x|w) \psi_{\eta_2}(x|w)}} d\mu(x) \right|\\
  &\leq& \int O(|x|) \times 
  (\psi_{\eta_1}(x|w) + \psi_{\eta_2}(x|w)) d\mu(x).
\eean
Since $\sup_w \sup_{\eta\in\cH_0} \int |x| d\Psi^w_{\eta}(x) < \infty$, this
establishes the equicontinuity of \eqref{eq:w_class}.
\qed

\subsubsection{Proof of asymtoptic tightness of%
   \eqref{eq:score_process_lm} in Corollary \ref{cor:ex-dpm-lm}}\label{subsubsec:4-6}

It only remains to prove asymptotic tightness of \eqref{eq:score_process_lm}.
Without loss of generality, we may assume that $\theta_0 = 0$.
Let $\scrF = B_\epsilon \times \cH_0$, where $B_\epsilon$ is the
Euclidean ball of radius $\epsilon$ centered on $\theta_0$, and
define $Z_{ni}(\theta,\eta)$, $S_{ni}$,  and $N_{[]}^n(\delta,\scrF)$
as in the proof of Corollary \ref{cor:ex-dpm}.
By the bracketing central limit theorem (Theorem~2.11.9 of
\cite{van1996weak}), it suffices to prove that,
\be
\begin{split}\label{eq:bracket_clt-lm}
  \sum_{i=1}^n P_0\big( S_{ni} 1_{\{S_{ni} > \gamma\}}\big) &= o(1),
    \quad\text{for every $\gamma > 0$},\\
  \int_0^{\delta_n}\sqrt{\log N^n_{[]} (\delta,\scrF)}\,d\delta &<
  \infty,
    \quad\text{for every $\delta_n \downarrow 0$}.
\end{split}
\ee
The first condition of \eqref{eq:bracket_clt-lm} is proved
in a manner similar to the proof of Corollary~\ref{cor:ex-dpm},
by replacing $Q_1$ by $Q$ defined in \eqref{eq:Q-dpm-lm}.

To prove the second condition of \eqref{eq:bracket_clt-lm},
note that,
\be
\label{eq:tri-lm-dpm}
\begin{split}
  |Z_{ni}(\theta_1, \eta_1) -& Z_{ni}(\theta_2, \eta_2)|\\
  &\leq |Z_{ni}(\theta_1,\eta_1) - Z_{ni}(\theta_2,\eta_1)|
    + |Z_{ni}(\theta_2,\eta_1) - Z_{ni}(\theta_2,\eta_2)|.
\end{split}
\ee
By \eqref{eq:Q-dpm-lm},
$\sup_{i \geq 1} \sup_{\eta\in\cH_0} |\score_{\theta,\eta,i}
- \score_{\theta_0,\eta,i}|$ is bounded by a constant multiple of
$|\theta_1 - \theta_2|$, and so there exists a constant $K_1 > 0$
such that,
\be
\label{eq:Z_ni-theta-lm}
  \sup_{\eta \in \cH_0} |Z_{ni}(\theta_1,\eta)
    - Z_{ni}(\theta_2,\eta)| \leq \frac{K_1}{\sqrt{n}}
    |\theta_1 - \theta_2|.
\ee
Let $g_{\eta,j} (x|w) = \partial \ell_\eta(x|w) / \partial x_j$, 
and for $t > 0$, let,
\[
  \cS_t = \{(x,w)\mapsto g_{\eta,j}(x|w) : \eta \in \cH_0, 1\leq j \leq m\},
\]
where functions in $\cS_t$ are viewed as maps from
$[-t,t]^{m} \times [-L,L]^{qm}$ to $\RR$.
Since $w$ ranges over a compact set and $G$ is supported on a
compact set, the $\alpha$-th order partial derivative of the map
$(x,w) \mapsto\psi_\eta(x|w)$ is bounded by a constant multiple of
$|x|^\alpha \psi_\eta(x|w)$ for every $\eta\in\cH_0$, $w$, and large
enough $|x|$. Thus, for some constant $D_\alpha >0$, the
$\alpha$-\Holder norm of functions in $\cS_t$ is bounded by
$D_\alpha t^{\alpha+1}$ for large enough $t$. Since the Lebesgue measure
of $[-t,t]^m\times[-L,L]^{qm}$ is bounded by a constant multiple
of $t^m$, applying Theorem~2.7.1 of \cite{van1996weak} with
$\alpha=d=(q+1)m$, there exists a constant $K_2 > 0$ such that,
\[
  \log N(\delta, \cS_t, \|\cdot\|_\infty) < K_2 \frac{t^{(q+2)m+1}}{\delta},
\]
for every $\delta > 0$ and large enough $t > 0$.
Since $\sup_w \sup_{\eta\in\cH_0} |s_\eta(x|w)| = O(|x|)$ and $\sup_w
\psi_{\eta_0}(x|w) = O(\exp(-K_3 |x|^2))$ as $|x|\rightarrow\infty$
for some constant $K_3 > 0$, we have,
\[
  \int_{\{|x| > M_\delta\}} \sup_{|y| \leq \sqrt{m}L\epsilon}
    \sup_w \sup_{\eta\in\cH_0}|s_\eta(x+y|w)|^2 
    \psi_{\eta_0}(x|w) d\mu(x) \leq \delta^2,
\]
for every small enough $\delta > 0$, where $M_\delta = -\log \delta$.
Therefore, for every small enough $\delta > 0$ we can construct a
partition $\cH_0 = \cup_{l=1}^{N_\delta} \cH^{(l)}$ such that, for
some constant $K_4 > 0$,
\[
  \log N_\delta \leq K_4 \bigg| \log \frac{1}{\delta}\bigg|^{(q+2)m+1}
  \frac{1}{\delta},
\]
and,
\be
\label{eq:Z_ni-eta-lm}
  \int \sup_{\theta\in B_\epsilon} \sup_{i \geq 1}
    \sup_{\eta_1, \eta_2 \in \cH^{(l)}} |Z_{ni}(\theta,\eta_1)
    - Z_{ni}(\theta,\eta_2)|^2 \psi_{\eta_0}(x|W_i) d\mu(x)
  < \frac{\delta^2}{n},
\ee
for every $l \leq N_\delta$. Since $N(\delta, B_\epsilon, |\cdot|)
= O(\delta^{-p})$ as $\delta \rightarrow 0$, \eqref{eq:tri-lm-dpm},
\eqref{eq:Z_ni-theta-lm} and \eqref{eq:Z_ni-eta-lm} imply that,
\[
  \log N^n_{[]}(\delta, \scrF) \leq K_5 \bigg(\bigg|
  \log \frac{1}{\delta}\bigg|^{(q+2)m+1} \frac{1}{\delta} 
    + \bigg| \log \frac{1}{\delta}\bigg|\bigg)
  \leq \bigg(\frac{1}{\delta}\bigg)^{3/2},
\]
for some $K_5 >0$, so the second condition of
\eqref{eq:bracket_clt-lm} is satisfied.
\qed

\subsubsection{Proof of asymptotic tightness of
  \eqref{eq:score_process_lm} in Corollary \ref{cor:ex-rs-lm}}
\label{subsubsec:4-7}

Without loss of generality, we may assume that $\theta_0 = 0$.
Let $\scrF = B_\epsilon \times \cH_0$, where $B_\epsilon$ is the
Euclidean ball of radius $\epsilon$ centered on $\theta_0$, and
define $Z_{ni}(\theta,\eta)$, $S_{ni}$,  and $N_{[]}^n(\delta,\scrF)$
as those defined in the proof of Corollary \ref{cor:ex-dpm-lm}.
By the bracketing central limit theorem (Theorem~2.11.9 of
\cite{van1996weak}), it is sufficient to prove \eqref{eq:bracket_clt-lm}.
The first condition of \eqref{eq:bracket_clt-lm} is easily satisfied.
For the second condition of \eqref{eq:bracket_clt-lm},
the inequalities \eqref{eq:tri-lm-dpm} and \eqref{eq:Z_ni-theta-lm}
hold similarly. Thus for every $\delta > 0$, it suffices to construct
a partition $\cH_0 = \cup_{l=1}^{N_\delta} \cH^{(l)}$ satisfying
\eqref{eq:Z_ni-eta-lm} and $\log N_\delta \leq \delta^{-2+\beta}$
for some $\beta > 0$.

For $f \in \cF_0$, let $\score_f$ be the derivatives of $\log f$, and let,
\[
  \cL = \cF_0 \cup \{\score_f : f\in \cF_0\}.
\]
Since functions in $\cL$ and their derivatives are uniformly bounded,
applying Theorem~2.7.1 of \cite{van1996weak} with $\alpha=d=1$,
there exists a constant $K_1 > 0$ such that,
\be
\label{eq:cL-entropy-rs-lm}
  \log N(\delta, \cL, \|\cdot\|_\infty) \leq K_1 \frac{1}{\delta},
\ee
for every $\delta > 0$.
Since $\cG_0$ is parametrized by a covariance matrix $\Sigma$, it
is compact in the induced matrix norm $\|\cdot\|$, and,
\[
  \log N(\delta, \cG_0, \|\cdot\|) \leq K_2 \log \bigg(\frac{1}{\delta}\bigg),
\]
for some $K_2 > 0$.
Note that in $\cG_0$, $d_V$ is bounded by a constant multiple of
$\|\cdot\|$ because the density of the normal distribution $N(0,\Sigma)$
is differentiable and its derivative is uniformly bounded, because
$\rho_{\min}(\Sigma) > \rho_1$. Since $d_W \leq d_V$ (see
\cite{gibbs2002choosing}), we have that,
\be
  \label{eq:cG0-entropy-rs-lm}
  \log N(\delta, \cG_0, d_W) \leq K_3 \log \bigg(\frac{1}{\delta}\bigg),
\ee
for some $K_3 > 0$.
Note that,
\[
  \frac{\partial\ell_\eta}{\partial x_k}(x|w) 
    = \frac{\int \score_f (x_k - b^T w_k)
        \prod_{j=1}^m f(x_j - b^T w_j) dG(b)}
      {\int \prod_{j=1}^m f(x_j - b^T w_j) dG(b)}.
\]
Since the denominator of the last display is bounded away from zero and
$b/a - d/c = b(c-a)/ac + (b-d)/c$ for every real numbers $a,b,c,d$
with $ac \neq 0$, there exists a constant $K_4 > 0$ such that for
every $f_1, f_2 \in \cF_0$,
\be
  \sup_{G\in\cG} \sup_{x,w} \bigg|
    \frac{\partial\ell_{(f_1,G)}}{\partial x_k}(x|w) 
    - \frac{\partial\ell_{(f_1,G)}}{\partial x_k}(x|w)\bigg|
  \leq K_4 (\|f_1 - f_2\|_\infty \vee \|\score_{f_1} - \score_{f_2}\|_\infty).
\ee
Also, by Theorem 2 of \cite{gibbs2002choosing}, there exists a
constant $K_5 > 0$ such that for every $G_1, G_2 \in \cG$,
\be
  \sup_{f\in\cF_0} \sup_{x,w} \bigg|
    \frac{\partial\ell_{(f,G_1)}}{\partial x_k}(x|w) 
      - \frac{\partial\ell_{(f,G_2)}}{\partial x_k}(x|w)\bigg|
  \leq K_5 d_W(G_1, G_2).
\ee
Therefore, by \eqref{eq:cL-entropy-rs-lm} and
\eqref{eq:cG0-entropy-rs-lm}, \eqref{eq:Z_ni-eta-lm} is satisfied
with entropy bound,
\[
  \log N_\delta \leq K_6 \bigg(\frac{1}{\delta}
    + \log\bigg(\frac{1}{\delta}\bigg)\bigg),
\]
for some $K_6 > 0$.
\qed
 
\section*{Acknowledgements}
The first author thanks to thesis committee members for valuable suggestions.
BK also thanks the {\it Statistics Department 
of Seoul National University, South Korea} for its kind hospitality.


\bibliographystyle{apalike}
\bibliography{bibliography}


\end{document}